\documentclass[11pt]{article}

\usepackage[reqno]{amsmath}
\usepackage{amssymb}
\usepackage{amsthm}
\usepackage{color}
\usepackage{dutchcal}

\usepackage{enumitem}
\usepackage{cleveref}

\usepackage{relsize}




\numberwithin{equation}{section}

\textwidth 490pt
\oddsidemargin -10pt
\topmargin -1.0 cm
\textheight 24 cm

\newtheorem{theorem}{Theorem}[section]
\newtheorem{lemma}[theorem]{Lemma}

\newtheorem{proposition}[theorem]{Proposition}

\newcommand{\R}{\mathbb{R}}
\newcommand{\N}{\mathbb{N}}

\newcommand{\B}{\mathbb{B}}

\makeatletter
\newcommand{\customlabel}[2]{%
	\protected@write \@auxout {}{\string \newlabel {#1}{{#2}{}}}}
\makeatother

\makeatletter
\newcommand{\leqnomode}{\tagsleft@true\let\veqno\@@leqno}
\newcommand{\reqnomode}{\tagsleft@false\let\veqno\@@eqno}
\makeatother

\begin{document}

\title{Necessary Optimality Conditions For Average Cost Minimization Problems}

\author{ 
    Piernicola Bettiol\footnote{ {\it Laboratoire de Math\'ematiques, Universit\'e de Bretagne
		Occidentale, 6 Avenue Victor Le Gorgeu, 29200 Brest, France, e-mail: \/}
						{\tt piernicola.bettiol@univ-brest.fr}} ,
     Nathalie Khalil\footnote{ {\it MODAL'X, Universit\'e Paris Ouest Nanterre La D\'efense, 200 Avenue de la R\'epublique, 92001 Paris Nanterre, France, e-mail: \/} 
            {\tt khalil.t.nathalie@gmail.com}}
}

\date{}

\maketitle
\begin{abstract}\noindent
Control systems involving unknown parameters appear a natural framework for applications in which the model design has to take into account various uncertainties. In these circumstances the performance criterion can be given in terms of an {\it average cost}, providing a  paradigm which differs from the more traditional minimax or robust optimization criteria. In this paper, we provide necessary optimality conditions for a nonrestrictive class of optimal control problems in which unknown parameters intervene in the dynamics, the cost function and the right end-point constraint. An important feature of our results is that we allow the unknown parameters belonging to a mere complete separable metric space (not necessarily compact).

\end{abstract}

\section{Introduction}
In this paper we consider a class of optimal control problems in which uncertainties appear in the data in terms of unknown parameters belonging to a given metric space. Though the state evolution is governed by a deterministic control system and the initial datum is fixed (and well-known), the description of the dynamics depends on uncertain parameters which intervene also in the cost function and the right end-point constraint. Taking into consideration an {\it average cost} criterion, a crucial issue is clearly to be able to characterize optimal controls independently of the unknown parameter action: this allows to find a sort of `best trade-off' among all the possible realizations of the control system as the parameter varies. 
In this context we provide, under non-restrictive assumptions, necessary optimality conditions.
More precisely, we consider the following average cost minimization problem:\leqnomode
\begin{equation*} \tag{P} \begin{cases}
\begin{aligned} \label{intprob}
& {\text{minimize}}
& & J_{\Omega}((u(.), \{x(.,\omega)\})) := \int_{\Omega} g( x(T,\omega); \omega)\ d \mu(\omega) \\
&&& \hspace{-1.9cm}\text{over measurable functions } u : [0,T] \rightarrow \R^m  \text{ and $W^{1,1}$ arcs } 
\{ x(.,\omega) : [0,T] \rightarrow \R^n \ | \ \omega \in \Omega   \} \\
&&& \hspace{-1.9cm}\text{ such that} \;\; \;\;\;  u(t) \in U(t) \quad \textrm{a.e. } t \in [0,T] \\
&&& \hspace{-1.9cm}\text{ and, for each} \ \omega \in \Omega, 
\\
& & & \dot{x}(t,\omega) = f(t,x(t,\omega), u(t), \omega) \quad \textrm{a.e. } t \in [0,T], \\
&&& x(0,\omega) = x_0 \quad \text{and} \quad \int_{\Omega} d_{C(\omega)} (x(T,\omega)) \ d \mu(\omega) = 0. 
\end{aligned}  	\end{cases} \end{equation*} \reqnomode
Here, $d_C(x)$ is the Euclidean distance of a point $x$ from the set $C$.
The data for this problem comprise a time interval $[0,T]$, a probability measure $\mu$ defined on a metric space $\Omega$, functions $g: \R^n \times \Omega \rightarrow \R$ and $f: [0,T] \times \R^n \times \R^m\times \Omega \rightarrow \R^n$, a nonempty multifunction $U: [0,T] \leadsto \R^m$, and a family of closed sets $ \{ C(\omega) \subset \R^n  \ | \ \omega \in \Omega \}$. 
A measurable function $u:[0,T] \rightarrow \R^m$ that satisfies
\[u(t) \in U(t) \quad \textrm{a.e. } t \in [0,T]\] is called a {\it control function}. The set of all control functions is written $\mathcal U$. A {\it process} $(u,\{x(.,\omega): \omega \in \Omega\})$ is a control function $u$ coupled with a family of arcs $\{x(.,\omega) \in W^{1,1}([0,T], \R^n): \omega \in \Omega\}$, satisfying, for each $\omega \in \Omega$, the dynamic constraint:
\[  \dot{x}(t,\omega) = f(t,x(t,\omega), u(t), \omega) \quad \text{a.e. } t\in [0,T] , \qquad x(0,\omega)=x_0. \]
A process is said to be {\it feasible} if, in addition, the arcs $x(.,\omega)$'s satisfy the averaged right end-point constraint
\[ \int_{\Omega}  d_{C(\omega)} (x(T,\omega)) \ d\mu(\omega) = 0 \  .\]
If the integral cost term in (\ref{intprob}) does not exist for a feasible process $( u,\{ x(.,\omega): \omega \in \Omega \}),$ then we set $J_{\Omega}(u(.), \{x(.,\omega)\}) = + \infty$. To underline the dependence on a given control $u(.) \in \mathcal{U}$, sometimes we shall employ the notation $x(.,u,\omega)$ for the feasible arc belonging to the family of trajectories $\{ x(.,\omega) \ : \ \omega \in \Omega  \}$, associated with the control $u(.)$ and the element $\omega \in \Omega$.\\
A feasible process $(\bar u,\{\bar x(.,\omega): \omega \in \Omega \})$ is said to be a {\it $W^{1,1}-$local minimizer} for (\ref{intprob}) if there exists $\epsilon>0$ such that
\[ \int_{\Omega} g( \bar x(T,\omega); \omega)\ d \mu(\omega)  \leq \int_{\Omega} g( x(T,\omega); \omega)\ d \mu(\omega)\] for all feasible processes $(u,\{x(.,\omega): \omega \in \Omega\})$ such that
\begin{equation}  \label{definition_norm_epsilon_minimizer} \| \bar x(.,\omega) - x(.,\omega) \|_{W^{1,1}} \leq \epsilon \quad \text{for all }\ \omega \in \text{supp}(\mu) \ . \end{equation}

\noindent
Control systems involving unknown parameters have been well-studied in literature finding widespread applications particularly from the point of view of the robust (worst-case) control, see for instance the monographs \cite{ackermann2012robust}, \cite{warga2014optimal} and 
\cite{boltyanski2012robust} (and the references therein), and the paper \cite{vinter2005minimax} on minimax optimal control. 
In the introductory section of \cite[Chapter IX]{warga2014optimal}, control problems with uncertainties are considered comparing the conservative approach (minimax) with an alternative approach in which one might minimize, for instance, an ``expected value'' (which corresponds to the average cost problem studied in our paper). Then, in \cite[Chapters IX and X]{warga2014optimal} Warga investigates the so-called ``conflicting/adverse control problems'' providing necessary conditions for this broad class of problems which covers minimax problems (under some regularity assumptions), but which does not cover optimal control problems having the average cost criterion studied in our paper.
(See \cite{warga1991nonsmooth} for further developments on adverse control problems in the nonsmooth context; cf. the recent papers \cite{palladino2016necessary} on adverse control problems and \cite{karamzin2016minimax} on state-constrained minimax problems.)

\noindent
A growing interest has recently emerged in considering an `averaged' (or `expected' with respect to a given measure) approach, 
exploring various issues, directions and applications: see for instance a recent series of papers on aerospace systems \cite{ross2015lebesgue}, \cite{ross2015riemann}, \cite{caillau2017solving}, and the articles \cite{agrachev2016ensemble} and \cite{zuazua2014averaged} on averaged controllability (from different viewpoints); see also \cite{veliov2008optimal} for results on heterogeneous systems.

\noindent
Therefore, motivated not only by theoretical reasons but also by a recent growing interest in applications (such as aerospace engineering, see in particular \cite{ross2015lebesgue} and \cite{ross2015riemann}), in our paper we consider the `average cost' paradigm rather than the more `classical' criteria employed in the minimax/robust or adverse optimization framework.

\noindent

For the general (nonsmooth) case we derive necessary optimality conditions ensuring the existence of a {\it costate function} $p(.,.) : [0,T]\times \Omega \to \R^n$ which satisfies an averaged (on $\Omega$) maximality condition. Moreover, the costate arcs $p(.,\omega)$'s satisfy also the somewhat expected adjoint system and transversality condition, when $\omega$ belongs at least to a countable dense subset $\widehat \Omega$ of supp$(\mu)$. We show that these last two necessary conditions extend to the whole supp$(\mu)$ for free right end-point problems, if we impose (suitable) regularity assumptions on the dynamics and the cost function. We also prove that a further (non-trivial) case, in which the conditions of maximum principle extend to the whole supp$(\mu)$, is when the measure $\mu$ is purely atomic (not necessarily with finite support).


\vskip2ex
\noindent
This paper is organized as follows. 
We first study the simpler case in which the measure $\mu$ has a finite support (Section 2), which constitutes a discretization model for the general case of an arbitrary measure on a complete separable metric space (which is investigated successively). The main results are displayed in Section 3, and their proofs are given in Section 5. Section 4 is devoted to recall some fundamental theorems in measure theory and provide a limit-taking lemma which play a crucial role in our analysis. The approach that we suggest in our paper consists in approximating the measure $\mu$ by measures with finite support (convex combination of Dirac measures). 
Owing to Ekeland's variational principle, we construct a suitable family of auxiliary optimal control problems, the solutions of which approximate the reference problem (\ref{intprob}). Invoking the maximum principle (applicable in a more traditional version) for the approximating minimizers, we obtain properties which, taking the limit (in a suitable sense), allow us to derive the desired necessary conditions. 
The most difficult part in our proof is to show the maximality condition: this requires non-trivial consideration of multifunction representation and selection theorems. 
This part becomes simpler for the `purely atomic' case and the `smooth' case.\\
An important source of inspiration for the techniques here employed is represented by Vinter's paper \cite{vinter2005minimax} (which is devoted to minimax optimal control but, in fact, contains flexible and effective analytical tools that can be extended or adapted to our case). As one may expect, the necessary conditions that we obtain differ from those ones in the minimax context (in particular for the general nonsmooth case and the purely atomic case), for the nature of the minimization criterion is different.
For instance, for the general (nonsmooth) case the most evident difference with respect to the costate arcs characterization given in \cite{vinter2005minimax} is that (avoiding a formulation which might involve somewhat complicate sets) we show that the `expected' adjoint system and transversality conditions are satisfied by a family of costate arcs $p(.,\omega)$'s, at least when the parameter $\omega$ belongs to a countable dense set $\widehat \Omega\subset \text{supp}(\mu)$.
We highlight that an important feature of our paper is the unrestrictive nature of our assumptions: indeed, we allow not only nonsmooth data (on the dynamics, the cost function and the  averaged right end-point constraint), but we also provide results for unknown parameters belonging to a mere complete separable metric space $\Omega$. This aspect is particularly  relevant for applications (cf. \cite{ross2015lebesgue}) where $\Omega$ (and the support of the reference measure $\mu$) need not to be compact. 
Our techniques could be used to generalize the conditions in \cite{vinter2005minimax} and might provide some insights into dealing with adverse/conflicting control problems with non-compact parameter sets (in \cite{warga2014optimal} and \cite{warga1991nonsmooth} parameter sets are assumed to be compact.)


\vskip2ex
\noindent
\textbf{Notation}
Let $( \Omega,\rho_\Omega)$ be a metric space. Denote by $\mathcal B_\Omega$ the $\sigma$-algebra of Borel sets in $\Omega$. A probability measure $\mu$ on the measurable space $(\Omega, \mathcal B_\Omega)$ takes non-negative values, verifies the $\sigma$-additivity property and is such that $\mu(\Omega)=1$. The family of all probability measures on $(\Omega,\mathcal B_\Omega)$ is denoted by $\mathcal M(\Omega)$. Recall that a sequence $\{ \mu_i \}$ of measures in $\mathcal M(\Omega)$ is said to converge weakly$^*$ to a measure $\mu \in \mathcal{M}(\Omega)$ (in symbol $\mu_i \stackrel{*}\rightharpoonup \mu$), if $\int_\Omega h d \mu_i \rightarrow \int_\Omega h d \mu $ for every bounded continuous function $h$ on $\Omega$. The support of a measure $\mu$ defined on $\Omega$ is written \text{supp}($\mu$).
$\mathcal L$ denotes the Lebesgue subsets of $[0,T]$, while $\mathcal B^{m}$ are the Borel subsets of $\R^m$. $\mathcal L \times \mathcal B^{m}$ (respectively  $\mathcal L \times \mathcal B^{m} \times \mathcal B_\Omega$) is the product $\sigma-$algebra of $\mathcal{L}$ and $ \mathcal B^{m}$ (respectively $\mathcal{L}$, $ \mathcal B^{m}$ and $\mathcal B_\Omega$). 
The Euclidean norm is written $|.|$. We shall employ the following norm on $W^{1,1}([0,T];\R^{n})$:
$\|x(.)\|_{W^{1,1}} ~:= ~|x(0)| + \|\dot x(.)\|_{L^1(0,T)} $.
We write $\partial \varphi (x)$ the limiting subdifferential of the (possibly extended valued) function $\varphi : \R^n \rightarrow \R \cup \{ + \infty\}$ at $x \in \text{dom} \varphi.$ If $\varphi = \varphi(x,y)$, then $\partial_x \varphi (x,y)$ is the partial limiting subdifferential with respect to the variable $x$. $\B$ is the closed unit ball in Euclidean space. $N_C(x)$ is the limiting normal cone of a closed set $C$ at a point $x\in C$, and $N^1_C(x):=N_C(x)\cap \B$. (We refer the reader to \cite{aubin2009set}, \cite{clarke1990optimization}, \cite{clarke2013functional}, and \cite{vinter2010optimal} and the references therein for these nonsmooth analytical tools.)

\section{Average on measures with finite support}
We start considering the particular and simple case of optimal control problems of the form (\ref{intprob}), where the probability measure $\mu$ of the integral functional has a finite support: it is a convex combination of unit Dirac measures. This constitutes also a preliminary step to derive necessary conditions for the general case.

\noindent
The following assumptions will be needed throughout this section. For a given $W^{1,1}-$local minimizer $(\bar u,\{\bar x(.,\omega): \omega \in \Omega\})$ and for some $\delta >0$, we shall suppose:
\begin{enumerate}[label=(H\arabic*),ref=H\arabic*]
	\item \label{H1}
	\begin{enumerate}[label=(\roman*),ref=(H\arabic*)(\roman*)]
		\item \label{H1(i)} The function $f(.,x,.,\omega)$ is $\mathcal L \times \mathcal B^{m}$ measurable for each $(x,\omega) \in \R^n \times \Omega.$ 
		\item \label{H1(ii)} The multifunction $t \leadsto U(t)$ has nonempty values, and $\textrm{Gr }U(.) $ is a $\mathcal L \times \mathcal B^{m}$ measurable set.
	\end{enumerate}	 
	\item  \label{H2}  There exists a $\mathcal L \times \mathcal B^{m}$ measurable function $k_f: [0,T] \times \R^m \rightarrow \R$ such that $t \rightarrow k_f(t,\bar u(t))$ is integrable, and for each $\omega \in \Omega$,
	\[\left| f(t,x,u,\omega)-f(t,x',u,\omega)\right| \leq k_f(t,u)|x-x'|\] for all $x, x' \in \bar x(t,\omega)+\delta \B$, $u \in U(t)$, a.e. $t \in [0,T]$.
	\item  \label{H3} The function $g(.,\omega)$ is Lipschitz continuous on $ \bar x (T,\omega)+\delta \B$ for all $\omega \in \text{supp}(\mu)$.
\end{enumerate}

\begin{proposition} \label{finitenco}
	Let  $(\bar u,\{\bar x(.,\omega): \omega \in \Omega \})$ be a $W^{1,1}-$local minimizer for (\ref{intprob}). Assume that $\mu$ is a given probability measure with finite support and that for some $\delta>0$, hypotheses (\ref{H1})-(\ref{H3}) are satisfied. Then, there exist a family of arcs $\{p(., \omega) \in W^{1,1}([0,T], \R^n): \omega \in \Omega \}$ and a number $\lambda \ge 0$ such that
	\begin{enumerate}[label=(\alph*),ref=(\alph*)]
		\item \label{a_nco_finite_support} $(\lambda, p(.,\omega)) \neq (0,0)$ for all $\omega \in \Omega$ ;
		\item\label{b_nco_finite_support} $ \begin{aligned}[t] \int_\Omega  p(t,\omega) \cdot & f(t,\bar x(t,\omega) ,\bar u(t),\omega) \ d\mu(\omega)  = \max_{u \in U(t)} \int_\Omega p(t,\omega) \cdot f(t,\bar x(t,\omega),u,\omega)\ d\mu(\omega) \qquad \text{a.e. } t \in [0,T] \ ; \end{aligned}$
		\item \label{c_nco_finite_support} $- \dot p(t, \omega) \in \textrm{co } \partial_x [p(t,\omega) \cdot f(t,\bar x(t,\omega), \bar u(t), \omega)]$ \qquad for $ \mu-\textrm{a.e. } \omega \in \Omega$ ;
		\item \label{d_nco_finite_support}  $- p(T, \omega) \in \lambda \partial_x g (\bar x(T, \omega); \omega)+  N_{C(\omega)}(\bar x (T, \omega)) $\qquad for $ \mu-\textrm{a.e. } \omega \in \Omega$.
	\end{enumerate}
\end{proposition}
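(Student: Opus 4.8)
The plan is to reduce the finite-support problem to a single classical Mayer optimal control problem in a product state space, and then invoke a standard nonsmooth maximum principle (such as the one in \cite{vinter2010optimal}). Write $\mu=\sum_{i=1}^{N}\alpha_{i}\delta_{\omega_{i}}$ with $\alpha_{i}>0$, $\sum_{i=1}^{N}\alpha_{i}=1$ and $\mathrm{supp}(\mu)=\{\omega_{1},\dots,\omega_{N}\}$. Then the cost $J_{\Omega}$ is the finite sum $\sum_{i=1}^{N}\alpha_{i}\,g(x(T,\omega_{i});\omega_{i})$, and, since $d_{C(\omega_{i})}\ge 0$ and $\alpha_{i}>0$, the averaged right end-point constraint $\int_{\Omega}d_{C(\omega)}(x(T,\omega))\,d\mu(\omega)=0$ is \emph{equivalent} to $x(T,\omega_{i})\in C(\omega_{i})$ for every $i\in\{1,\dots,N\}$. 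Moreover, by (\ref{definition_norm_epsilon_minimizer}), the $W^{1,1}$-local minimality of the family $\{\bar x(\cdot,\omega)\}$ reduces to the $W^{1,1}$-local minimality of the $N$-tuple $(\bar x(\cdot,\omega_{1}),\dots,\bar x(\cdot,\omega_{N}))$, the parameters outside $\mathrm{supp}(\mu)$ being irrelevant.

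Next I would introduce the augmented state $Y=(y_{1},\dots,y_{N})\in\R^{nN}$ and consider the classical problem of minimizing $G(Y(T)):=\sum_{i=1}^{N}\alpha_{i}\,g(y_{i}(T);\omega_{i})$ over controls $u\in\mathcal U$ and arcs $Y\in W^{1,1}([0,T];\R^{nN})$ subject to
\[
\dot Y(t)=F(t,Y(t),u(t)):=\bigl(f(t,y_{1},u,\omega_{1}),\dots,f(t,y_{N},u,\omega_{N})\bigr)\ \text{ a.e.},\qquad Y(0)=(x_{0},\dots,x_{0}),\qquad Y(T)\in\mathcal C,
\]
where $\mathcal C:=C(\omega_{1})\times\cdots\times C(\omega_{N})$ is closed. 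By the preceding paragraph, $\bigl(\bar u,(\bar x(\cdot,\omega_{1}),\dots,\bar x(\cdot,\omega_{N}))\bigr)$ is a $W^{1,1}$-local minimizer for this problem. I would then check that hypotheses (\ref{H1})--(\ref{H3}) provide exactly the assumptions needed to apply the classical nonsmooth maximum principle to it: $F(\cdot,Y,\cdot)$ is $\mathcal L\times\mathcal B^{m}$-measurable by (\ref{H1(i)}); $\mathrm{Gr}\,U$ is $\mathcal L\times\mathcal B^{m}$-measurable by (\ref{H1(ii)}); $F(t,\cdot,u)$ is Lipschitz on a neighbourhood of $\bar Y(t)$ with constant $k_{f}(t,u)$ — the same as for each component, because $F$ is block-diagonal in $Y$ and $k_{f}$ does not depend on $\omega$ — with $t\mapsto k_{f}(t,\bar u(t))$ integrable, by (\ref{H2}); and $G$ is Lipschitz near $\bar Y(T)$ by (\ref{H3}), since each $\omega_{i}\in\mathrm{supp}(\mu)$.

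Applying the maximum principle produces $P=(p_{1},\dots,p_{N})\in W^{1,1}([0,T];\R^{nN})$ and $\lambda\ge 0$, not both zero, with
\[
-\dot P(t)\in\mathrm{co}\,\partial_{Y}\bigl[P(t)\cdot F(t,\bar Y(t),\bar u(t))\bigr]\quad\text{a.e.},\qquad -P(T)\in\lambda\,\partial G(\bar Y(T))+N_{\mathcal C}(\bar Y(T)),
\]
together with the maximum condition $P(t)\cdot F(t,\bar Y(t),\bar u(t))=\max_{u\in U(t)}P(t)\cdot F(t,\bar Y(t),u)$ for a.e.\ $t$. I would then exploit the separable/product structure of all data in $Y$: since $Y\mapsto P(t)\cdot F(t,Y,\bar u(t))$ is a separable sum, $\partial_{Y}[P(t)\cdot F(t,\cdot,\bar u(t))]=\prod_{i}\partial_{x}[p_{i}(t)\cdot f(t,\cdot,\bar u(t),\omega_{i})]$; since $G$ is a separable sum, $\partial G(\bar Y(T))=\prod_{i}\alpha_{i}\,\partial_{x}g(\bar x(T,\omega_{i});\omega_{i})$; and $N_{\mathcal C}(\bar Y(T))=\prod_{i}N_{C(\omega_{i})}(\bar x(T,\omega_{i}))$. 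Hence the three relations split componentwise. Putting $p(\cdot,\omega_{i}):=\alpha_{i}^{-1}p_{i}(\cdot)$ for $i=1,\dots,N$ (legitimate because $\alpha_{i}>0$) and $p(\cdot,\omega):=$ any fixed non-zero constant arc for $\omega\notin\mathrm{supp}(\mu)$, one reads off: \ref{b_nco_finite_support}, which is the maximum condition rewritten as an integral against the finitely supported $\mu$ (the weights $\alpha_{i}$ being absorbed into $p(\cdot,\omega_{i})$); \ref{c_nco_finite_support}, because division by the positive scalar $\alpha_{i}$ commutes with both $\mathrm{co}$ and $\partial_{x}$, and the inclusion holds at $\omega_{1},\dots,\omega_{N}$, i.e.\ $\mu$-a.e.; and \ref{d_nco_finite_support}, because dividing by $\alpha_{i}$ and using that $N_{C(\omega_{i})}$ is a cone gives $\alpha_{i}^{-1}N_{C(\omega_{i})}=N_{C(\omega_{i})}$.

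The step requiring the most care is the nontriviality \ref{a_nco_finite_support}: the classical maximum principle only asserts $(\lambda,P)\neq(0,0)$, whereas \ref{a_nco_finite_support} demands $(\lambda,p(\cdot,\omega))\neq(0,0)$ for \emph{every} $\omega$. For $\omega\notin\mathrm{supp}(\mu)$ this is true by the above choice. From (\ref{H2}) one has the pointwise estimate $|\dot p_{i}(t)|\le k_{f}(t,\bar u(t))\,|p_{i}(t)|$ a.e., so by Gronwall each $p_{i}$ is either identically zero or nowhere zero on $[0,T]$; in particular, if $\lambda>0$ then \ref{a_nco_finite_support} holds for all $\omega$. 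In the abnormal case $\lambda=0$ one must exclude the possibility that $p_{i}\equiv 0$ for some $i$ with $\omega_{i}\in\mathrm{supp}(\mu)$ while $P\not\equiv 0$; the way to handle this is to choose the multiplier appropriately — for instance by re-running the maximum principle on the auxiliary problem obtained by adjoining the constraints $y_{j}(T)=\bar x(T,\omega_{j})$ for the indices $j$ with $p_{j}\not\equiv 0$ (for which $\bar u$ is still a $W^{1,1}$-local minimizer, the cost decoupling into a constant plus the terms carried by the remaining, degenerate, indices, while the transversality on the adjoined components becomes vacuous), so as to supply for each degenerate $i$ a costate arc $p(\cdot,\omega_{i})$ still satisfying \ref{b_nco_finite_support}--\ref{d_nco_finite_support} with $(\lambda,p(\cdot,\omega_{i}))\neq(0,0)$. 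This bookkeeping around the nontriviality is the only delicate point; everything else is a direct transcription of the classical maximum principle to the augmented system, and none of the measure-theoretic and selection arguments needed for the general (arbitrary $\mu$) case is involved here.
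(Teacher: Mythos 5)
Your argument is, in substance, the paper's own proof of Proposition \ref{finitenco}: rewrite $\mu=\sum_j\alpha_j\delta_{\omega_j}$, observe that the averaged end-point constraint is equivalent to $x(T,\omega_j)\in C(\omega_j)$ for each $j$ (since $\alpha_j>0$ and $d_{C}\ge 0$), recast the problem as a standard Mayer problem in the product state space, apply the nonsmooth maximum principle of \cite{vinter2010optimal} together with the sum rule to split the conditions componentwise, and rescale $p(\cdot,\omega_j):=\alpha_j^{-1}\widetilde p_j$, using that positive scalars commute with $\mathrm{co}$, $\partial_x$ and the normal cone. Parts \ref{b_nco_finite_support}, \ref{c_nco_finite_support} and \ref{d_nco_finite_support} are handled correctly and exactly as in the paper. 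You are also right to single out \ref{a_nco_finite_support} as the delicate point: the maximum principle yields only the joint nontriviality $(\lambda,\widetilde p_1,\dots,\widetilde p_N)\neq(0,\dots,0)$, and the componentwise statement does not follow from it; the paper's proof simply asserts the deduction without comment.

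The patch you propose for the abnormal case, however, does not close this gap. If you adjoin the constraints $y_j(T)=\bar x(T,\omega_j)$ for the indices $j$ with $p_j\not\equiv 0$ and re-run the maximum principle, the transversality condition on those components becomes vacuous (the limiting normal cone to a singleton is all of $\R^n$), so the new nontriviality can again be carried entirely by those same components, leaving the degenerate indices with zero costates. Even if the second run did produce nonzero arcs for the degenerate indices, you could not graft them onto the first family: \ref{b_nco_finite_support} is a single maximality condition coupling all components through the common control $u$, and it is not preserved when costate arcs from two different applications of the maximum principle are mixed; nor can multipliers be added, since the limiting subdifferentials in \ref{c_nco_finite_support} and \ref{d_nco_finite_support} are not additive. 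In fact no bookkeeping can succeed in full generality: take $N=2$, let the $\omega_1$-data form a genuinely abnormal system (every admissible multiplier has $\lambda=0$, with nontriviality carried by $p(\cdot,\omega_1)$ via $N_{C(\omega_1)}$), and let $C(\omega_2)=\R^n$ with $g(\cdot,\omega_2)$ differentiable; then \ref{d_nco_finite_support} forces $p(T,\omega_2)=0$, and \ref{c_nco_finite_support} together with the Gronwall estimate $|\dot p(t,\omega_2)|\le k_f(t,\bar u(t))\,|p(t,\omega_2)|$ (which you yourself invoke) forces $p(\cdot,\omega_2)\equiv 0$, so \ref{a_nco_finite_support} fails at $\omega_2$ for every choice of multipliers compatible with \ref{b_nco_finite_support}--\ref{d_nco_finite_support}. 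So this step needs either a genuinely different argument or a weakening of \ref{a_nco_finite_support} to the joint nontriviality actually delivered by the maximum principle; as written, your proof (like the paper's) establishes \ref{b_nco_finite_support}--\ref{d_nco_finite_support} but not \ref{a_nco_finite_support}.
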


\begin{proof}
	
	The measure $\mu$ can be written as a convex combination of Dirac measures at points $\omega_j \in \Omega$, for $j=1,\ldots,N$, where $N$ is a suitable integer, as follows:
	\begin{equation} \label{definition_dirac_measure_convex_combination} \mu = \sum_{j=1}^{N} \alpha_j \delta_{\omega_j} \ , \qquad \sum_{j=1}^{N} \alpha_j =1\ , \quad \alpha_j \in (0,1] \ .     \end{equation}
	As a consequence the integral functional to minimize (\ref{intprob}) reduces to the following finite sum:
	\[ \sum_{j=1}^{N} \alpha_j g (x(T,\omega_j);\omega_j) \ ,  \]
	and, the minimization problem (\ref{intprob}) turns out to be easily treated, for it can be equivalently written as a standard optimal control problem: \leqnomode		
	\begin{equation*}\label{finiteprob} \begin{cases}
	\begin{aligned}
	& {\text{minimize}}
	& &  \sum_{j=1}^{N} \alpha_j g(x(T,\omega_j); \omega_j) \\
	&&& \hspace{-1.9cm}\text{over controls } u(.) \text{ such that } u(t) \in U(t) \textrm{ a.e. } t \in [0,T] \\ &&&  \hspace{-2.0cm} \text{ and arcs } x(.,\omega_j) \text{ such that for each } j=1,\ldots,N \\
	&&& \dot{x}(t,\omega_j) = f(t,x(t,\omega_j), u(t), \omega_j) \quad \textrm{a.e. } t \in [0,T] \\
	&&& x(0,\omega_j) = x_0 \quad \text{and} \quad x(T,\omega_j) \in C(\omega_j)  \  .  \end{aligned} \tag{$P_N$}
	\end{cases}	
	\end{equation*}
		Observe that in writing (\ref{finiteprob}), we can restrict attention only to elements $\omega$ belonging to the $\text{supp}(\mu) = \{ \omega_1,\ldots,\omega_N\}$. Under the stated assumptions (\ref{H1})-(\ref{H3}) and using the sum rule (cf. \cite[Theorem 5.4.1]{vinter2010optimal}), the necessary conditions for (\ref{finiteprob}) can be derived from the nonsmooth maximum principle \cite[Theorem 6.2.1]{vinter2010optimal} which guarantees the existence of a multiplier $\lambda \ge 0$ and arcs $\widetilde p(., \omega _j) \in W^{1,1}([0,T],\R^n)$, for $j=1,\ldots,N$ such that
	\begin{enumerate}[label= (\roman{*}), ref= \roman*]
		\item $ (\lambda,  \widetilde p(.,\omega_1),\ldots, \widetilde p(.,\omega_N)) \neq (0,\ldots,0)$ ;
		\item $- \dot {\widetilde p}(t, \omega_j) \in \textrm{co } \partial_x [\widetilde p(t,\omega_j) \cdot f(t,\bar x(t,\omega_j), \bar u(t), \omega_j)]$ \quad for all $j=1,\ldots,N$ ;
		\item $ - \widetilde p(T, \omega_j) \in \lambda \alpha_j \partial_x g (\bar x (T,\omega_j); \omega_j) +   N_{C(\omega_j)}( x (T, \omega_j)) $\quad for all $j=1,\ldots,N$ ;
		\item $\sum \limits_{j=1}^{N} \widetilde p(t, \omega_j) \cdot f (t, \bar x (t, \omega_j), \bar u(t), \omega_j) = \max \limits_{u \in U(t)}  \sum \limits_{j=1}^{N} \widetilde p(t, \omega_j) \cdot f (t, \bar x (t, \omega_j), u, \omega_j) \quad$ a.e. $t\in [0,T]$.
	\end{enumerate}
	For each $j$, we set \[ p (.,\omega_j) := \frac{\widetilde p(.,\omega_j)}{\alpha_j} \ . \] 
We deduce, therefore, conditions \ref{a_nco_finite_support}-\ref{d_nco_finite_support} of the proposition statement. This concludes the proof.

\end{proof}

\section{Main results}

We take now a probability space $(\Omega, \mathcal B_\Omega,\mu)$ where $\mu$ is a (general) probability measure. 
For a given  $W^{1,1}-$local minimizer $(\bar u,\{\bar x(.,\omega): \omega \in \Omega \})$ and for some $\delta >0$, we shall suppose: 
\begin{enumerate}[label=(A\arabic*), ref=A\arabic*]
	\item \label{A3_nco_general_case} $(\Omega, \rho_\Omega)$ is a complete separable metric space. 
	\item \label{A1_nco_general_case}
	\begin{enumerate}[label=(\roman*), ref=(A2)(\roman{*})]
		\item \label{A1_i_nco_general_case} The function $f(.,x,.,.)$ is $\mathcal L \times \mathcal B^m \times \mathcal B_\Omega$ measurable for each $x \in \R^n$.
		\item \label{A1_ii_nco_general_case} The multifunction $t \leadsto U(t)$ has nonempty values and $\textrm{Gr }U(.) $ is a $\mathcal{L} \times \mathcal{B}^m$ measurable set.
		\item \label{A1_iii_nco_general_case} The set $ f(t,x,U(t),\omega)$ is closed for all $x \in \bar x(t,\omega) + \delta\B$, and $(t,\omega) \in [0,T] \times \Omega.$
	\end{enumerate}	 
	\item \label{A2_nco_general_case} There exist a constant $c >0$ and an integrable function $k_f: [0,T] \rightarrow \R$ such that
			\[\left| f(t,x,u,\omega)-f(t,x',u,\omega)\right| \leq k_f(t)|x-x'| \;\; \textrm{and} \;\; |f(t,x,u,\omega)| \leq c \] for all $x, x' \in \bar x(t,\omega) + \delta \B$, $u \in U(t)$, $\omega \in \Omega$ a.e. $t \in [0,T]$.
	\item \label{A4_nco_general_case}
	\begin{enumerate}[label=(\roman{*}),ref=(A4)(\roman{*})]
		\item \label{A4_i_nco_general_case} The function $g$ is $\mathcal B^n \times \mathcal{B}_\Omega$ measurable.
		\item \label{A4_ii_nco_general_case}There exist positive constants $k_g \ge 1$ and $M \ge \delta$ such that for all $\omega \in \Omega$ we have \vskip1ex
		 $|g(x,\omega)| \leq M$ and $d_{C(\omega)}(x) \le M$ for all $x \in \bar x(T,\omega) + \delta \B$
		,\vskip1ex
		 $ |g (x, \omega) - g(x', \omega)| \leq k_g |x-x'|$  for all $x, x' \in \bar x(T,\omega) + \delta \B \ .$ 
		\item \label{A4_iii_nco_general_case}
		There exists a modulus of continuity $\theta(.)$ such that for all $\omega \in \Omega$ and $x \in \bar x(T,\omega) + \delta \B$ we have
		$$
		|g(x,\omega_1)-g(x,\omega_2)| \leq \theta(\rho_\Omega (\omega_1,\omega_2)) \quad \mbox{for all } \; \omega_1,\omega_2 \in \Omega\ , 
		$$
		and
		$$
		|d_{C(\omega_1)}(x)-d_{C(\omega_2)}(x)| \leq \theta(\rho_\Omega (\omega_1,\omega_2)) \quad \mbox{for all } \; \omega_1,\omega_2 \in \Omega\ .
		$$
	\end{enumerate} 
	\item \label{A5_nco_general_case}
	
	 There exists a modulus of continuity $\theta_f(.)$ such that for all $\omega,\omega_1, \omega_2 \in \Omega$, \[ \int_{0}^{T} \sup\limits_{x \in \bar x (t,\omega) + \delta \B, \; u \in U(t)}
		|f(t,x,u,\omega_1) - f(t,x,u,\omega_2) | \ d t \leq \theta_f (\rho_{\Omega}(\omega_1, \omega_2)). \]
\end{enumerate}
(We say that $\theta: [0, \infty) \rightarrow [0,\infty)$ is a modulus of continuity if $\theta(s)$ is increasing and $\lim\limits_{ s \downarrow 0} \theta (s)=0.$)

\noindent
The first result provides necessary optimality conditions for the general nonsmooth case.

\begin{theorem} \label{theorem_nco_general_case_probability_space}
	Let  $(\bar u,\{\bar x(.,\omega): \omega \in \Omega \})$ be a $W^{1,1}-$local minimizer for (\ref{intprob}) in which $\mu \in \mathcal{M}(\Omega)$ is given. Assume that, for some $\delta>0$, hypotheses (\ref{A3_nco_general_case})-(\ref{A5_nco_general_case}) are satisfied. Then, there exist $\lambda \ge 0$, a $\mathcal{L}\times\mathcal{B}_\Omega$ measurable function $p(.,.): [0,T] \times \Omega \rightarrow \R^n$ and a countable dense subset $\widehat \Omega$ of supp$(\mu)$ such that
	\begin{enumerate}[label=(\roman*), ref=\roman*]
		\item \label{item: absolute continuity of adjoint arcs general case} $p(.,\omega) \in W^{1,1}([0,T],\R^n)$ \quad for all $\omega \in \widehat \Omega \ ;$
		\item \label{item: maximality condition general case} $\begin{aligned}[t]  \int_\Omega p(t,\omega) \cdot  f(t,\bar x(t,\omega),\bar u(t),\omega) \ d\mu(\omega)  = \max_{u \in U(t)} \int_\Omega p(t,\omega) \cdot f(t,\bar x(t,\omega),u,\omega) \ d\mu(\omega) \quad \text{a.e. } t \in [0,T] \ ; \end{aligned} $
		\item \label{item: adjoint system+nontriviality+transversality: set L(omega)}$p(.,\omega) \in \text{co }\mathcal{P}(\omega)$ \quad for all $\omega \in \widehat{\Omega}$
		where		
	\begin{align} \nonumber \mathcal{P}(\omega) & := \Bigg\{ q(.,\omega) \in  W^{1,1}( [0,T], \R^n) \ : \  
	\label{non triviality condition in the set L(omega)} ( \lambda, \{q(.,\omega) : \omega \in \widehat{\Omega} \}) \neq (0,0) ,
\\ & \nonumber - \dot q(t,\omega) \in \text{co }\partial_x [q(t,\omega) \cdot f(t,\bar x(t,\omega),\bar u(t),\omega)] \quad \text{ a.e. } t \in [0,T],\\ & \text{and }\
-q(T,\omega) \in \lambda \partial_x g(\bar x(T,\omega);\omega) +  N^1_{C(\omega)}(\bar x(T,\omega))  \Bigg\}. \nonumber \end{align}
	\end{enumerate}

\end{theorem}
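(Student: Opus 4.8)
The plan is to reduce the general case to the finite-support case (\Cref{finitenco}) via a two-fold approximation: first discretize the measure $\mu$ by finitely supported measures $\mu_i \stackrel{*}\rightharpoonup \mu$ (convex combinations of Dirac masses supported on a countable dense subset $\widehat\Omega$ of $\text{supp}(\mu)$, obtainable since $\Omega$ is complete separable by \ref{A3_nco_general_case}), and second, since the reference process need not be optimal for the discretized problems, apply Ekeland's variational principle to produce, for each $i$, a genuine minimizer $(\bar u_i,\{\bar x_i(.,\omega)\})$ of a perturbed problem $(P_{N_i})$ whose distance to $(\bar u,\{\bar x(.,\omega)\})$ tends to $0$. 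To set up Ekeland, I would work in the metric space of controls with the Ekeland (Lebesgue-measure-of-disagreement) metric, using the uniform bounds and Lipschitz estimates \ref{A2_nco_general_case} to ensure continuity of the cost and closure/completeness, and using \ref{A4_ii_nco_general_case}--\ref{A4_iii_nco_general_case}, \ref{A5_nco_general_case} to control how the finite-support averaged cost and endpoint penalty approximate the true ones uniformly in the control. The averaged endpoint constraint $\int_\Omega d_{C(\omega)}(x(T,\omega))\,d\mu=0$ would be handled by exact penalization, adding a term $K\sum_j \alpha_j^{(i)} d_{C(\omega_j)}(x(T,\omega_j))$ to the cost with $K$ large, so that $(P_{N_i})$ becomes a free-endpoint problem to which \Cref{finitenco} (or directly \cite[Theorem 6.2.1]{vinter2010optimal}) applies.

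Next I would apply \Cref{finitenco} to each $(P_{N_i})$, obtaining multipliers $\lambda_i\ge 0$ and costate arcs $p_i(.,\omega_j)$, $j=1,\dots,N_i$, satisfying the adjoint inclusion, the transversality condition (with $N^1_{C(\omega)}$ after normalization and absorbing the exact-penalty term, which contributes an element of $N_{C(\omega)}$ of bounded norm), and the averaged maximality condition $\int_\Omega p_i(t,\omega)\cdot f(t,\bar x_i(t,\omega),\bar u_i(t),\omega)\,d\mu_i(\omega) = \max_{u\in U(t)} \int_\Omega p_i(t,\omega)\cdot f(t,\bar x_i(t,\omega),u,\omega)\,d\mu_i(\omega)$. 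Normalizing so that $\lambda_i + \sup_j |p_i(.,\omega_j)|_{W^{1,1}}$ (or an equivalent quantity) equals $1$, the bound \ref{A2_nco_general_case} on $k_f$ gives, via Gronwall, uniform $W^{1,1}$ bounds on the $p_i(.,\omega)$'s, and \ref{A5_nco_general_case} together with \ref{A4_iii_nco_general_case} gives an equicontinuity-in-$\omega$ estimate: $|p_i(t,\omega)-p_i(t,\omega')|$ is controlled by $\theta_f(\rho_\Omega(\omega,\omega'))+\theta(\rho_\Omega(\omega,\omega'))$ uniformly in $i$ and $t$. On the countable set $\widehat\Omega$, a diagonal extraction then yields a subsequence with $p_i(.,\omega)\to p(.,\omega)$ in $C([0,T])$ (weakly in $W^{1,1}$) for every $\omega\in\widehat\Omega$ and $\lambda_i\to\lambda$. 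Passing to the limit in the adjoint inclusion and transversality (using closure of the limiting subdifferential and $\text{co}$, plus convergence $\bar x_i\to\bar x$) gives, for each $\omega\in\widehat\Omega$, that $p(.,\omega)\in\text{co }\mathcal P(\omega)$; the nontriviality $(\lambda,\{p(.,\omega):\omega\in\widehat\Omega\})\neq(0,0)$ survives the normalization provided one checks that the $W^{1,1}$ norms don't all collapse — which is where the equicontinuity in $\omega$ plus density of $\widehat\Omega$ is used, since a uniform normalization over the countable dense set controls the true supremum. One then extends $p(t,\omega)$ measurably to all of $\Omega$ (e.g. as a pointwise limit along $\widehat\Omega$ using the modulus estimate, giving $\mathcal L\times\mathcal B_\Omega$ measurability).

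The genuinely hard step is the passage to the limit in the maximality condition \ref{item: maximality condition general case}. The finite-support maximality holds with the measures $\mu_i$ and the data at $(\bar x_i,\bar u_i)$; we must upgrade to the continuum measure $\mu$ at $(\bar x,\bar u)$, and the max over $u\in U(t)$ does not commute with weak limits of controls. The strategy, following the template of \cite{vinter2005minimax}, is to freeze a measurable selection $u(.)$ with $u(t)\in U(t)$ and show $\int_\Omega p(t,\omega)\cdot f(t,\bar x(t,\omega),u(t),\omega)\,d\mu(\omega)\le \int_\Omega p(t,\omega)\cdot f(t,\bar x(t,\omega),\bar u(t),\omega)\,d\mu(\omega)$ a.e.\ $t$; integrating against arbitrary measurable time-dependent selections and invoking a measurable selection theorem (Aumann / Filippov, using \ref{A1_ii_nco_general_case}, \ref{A1_iii_nco_general_case} and closedness of $f(t,x,U(t),\omega)$) reduces this to an integrated inequality which can be obtained as a limit of the finite-sum inequalities. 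Here the limit-taking lemma announced in Section 4, together with weak$^*$ convergence $\mu_i\stackrel{*}\rightharpoonup\mu$, $\bar x_i(T,.)\to\bar x(T,.)$ and the Ekeland error estimate on $\|\bar u_i - \bar u\|$, must be combined with the uniform-in-$\omega$ continuity of $\omega\mapsto p_i(t,\omega)$ and of $\omega\mapsto f(t,\bar x_i(t,\omega),u,\omega)$ (from \ref{A5_nco_general_case}) so that the integrands converge uniformly enough for the weak$^*$ convergence of $\mu_i$ to be applicable; the $L^1$-in-$t$, bounded-in-everything-else structure from \ref{A2_nco_general_case} and \ref{A5_nco_general_case} is what makes a dominated-convergence / Fubini argument go through. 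Controlling the discrepancy between "max inside the integral over $\mu_i$" and "the selection inequality for $\mu$" uniformly is the crux, and the representation/selection apparatus mentioned in the introduction is exactly what is needed to avoid an interchange of max and integral that would otherwise be illegitimate. The remaining items (measurability of $p$, the $W^{1,1}$ regularity on $\widehat\Omega$, and nontriviality) are then bookkeeping consequences of the construction.
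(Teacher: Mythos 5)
Your overall architecture --- discretize $\mu$ by finitely supported measures, run Ekeland's principle on the control space with the Ekeland metric, apply the finite-support maximum principle (Proposition \ref{finitenco}), take a diagonal limit on a countable dense $\widehat\Omega$, and recover the averaged maximality condition via Castaing/Aumann selections combined with the limit-taking Lemma \ref{weakconvergence} --- is indeed the paper's architecture. But two steps you rely on would fail as stated. The first is the treatment of the endpoint constraint by additive exact penalization, adding $K\sum_j\alpha_j^{(i)} d_{C(\omega_j)}(x(T,\omega_j))$ with ``$K$ large'': exactness of such a penalty requires a calmness/metric-regularity property of the constraint system that is not among the hypotheses, and for any fixed $K$ there may be slightly infeasible controls near $\bar u$ that strictly lower the penalized cost. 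The paper instead sets $J_i(u):=\bigl[\int_\Omega\bigl(g(x(T,u,\omega);\omega)-\int_\Omega g(\bar x(T,\omega);\omega)\,d\mu+\epsilon_i^2\bigr)d\mu\bigr]\vee\int_\Omega d_{C(\omega)}(x(T,\omega))\,d\mu$, which is nonnegative on all of $\mathcal U$ and equals $\epsilon_i^2$ at $\bar u$, so $\bar u$ is \emph{unconditionally} an $\epsilon_i^2$-minimizer; the Max Rule then supplies the convex weights $\lambda_i,\,1-\lambda_i$ on $\partial_x g$ and $\partial d_{C(\omega)}$ that become the transversality condition.

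The second gap is your claimed uniform estimate $|p_i(t,\omega)-p_i(t,\omega')|\le \theta_f(\rho_\Omega(\omega,\omega'))+\theta(\rho_\Omega(\omega,\omega'))$. Hypothesis (\ref{A4_nco_general_case})(iii) controls the $\omega$-variation of $g$ and $d_{C(\omega)}$ themselves, not of their limiting subdifferentials, so the terminal values $p_i(T,\omega)$ can jump discontinuously in $\omega$; no equicontinuity of the costates holds in the nonsmooth case. This is precisely why the paper obtains the adjoint and transversality conditions only on the countable set $\widehat\Omega$ (and why the smooth free-endpoint case, Theorem \ref{proposition: smooth data nco}, where such continuity does propagate, is a separate result). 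Consequently your nontriviality argument, which leans on that equicontinuity to keep the normalized norms from collapsing, does not go through; the paper's nontriviality comes instead from the Max-Rule structure: if $\lambda=\lim\lambda_i=0$, positivity of $\widetilde J_i(u_i)$ forces the distance term to be active, whence $|p_i(T,\omega^i_{j_i})|\ge (1-\lambda_i)-\lambda_i k_g\ge \tfrac12$ for some atom $\omega^i_{j_i}$. Relatedly, you treat the costate from the Step-3 diagonal limit and the costate in the averaged maximality inequality as automatically the same function; the paper must work to arrange this, encoding for each $K$ the first $K$ integrated inequalities into the sets $D_i,D$ of Lemma \ref{weakconvergence} and then extracting a weak limit of the resulting $p_K$ in $L^2_\mu(\Omega,L^2([0,T];\R^n))$ --- and it is this weak limit that forces the convexification $\text{co}\,\mathcal P(\omega)$ in conclusion (iii), which your sketch does not account for.
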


\noindent
Moreover, we consider two special cases in which condition (iii) becomes much simpler and the desired properties involving the costate arcs extend to the whole $\mbox{supp} (\mu)$: when the measure $\mu$ is purely atomic, and the smooth right end-point free case. 

\begin{theorem}[Purely atomic case]\ \label{atomic}
	Let  $(\bar u,\{\bar x(.,\omega): \omega \in \Omega \})$ be a $W^{1,1}-$local minimizer for (\ref{intprob}) in which $\mu \in \mathcal{M}(\Omega)$ is a purely atomic measure such that each atom is a singleton. Assume that, for some $\delta>0$, hypotheses (\ref{A3_nco_general_case})-(\ref{A5_nco_general_case}) 
	are satisfied. Then, there exist $\lambda \ge 0$, a $\mathcal{L}\times\mathcal{B}_\Omega$ measurable function $p(.,.): [0,T] \times \Omega \rightarrow \R^n$ and a (at most) countable set $\widehat \Omega=$supp$(\mu)$ such that
	\begin{enumerate}[label=(\roman*), ref=\roman*]
		\item \label{item: absolute continuity of adjoint arcs atomic case} $p(.,\omega) \in W^{1,1}([0,T],\R^n)$ \quad for all $\omega \in \widehat \Omega \ ;$
		\item \label{item: maximality condition atomic case} $\begin{aligned}[t]  \int_\Omega p(t,\omega) \cdot  f(t,\bar x(t,\omega),\bar u(t),\omega) \ d\mu(\omega)  = \max_{u \in U(t)} \int_\Omega p(t,\omega) \cdot f(t,\bar x(t,\omega),u,\omega) \ d\mu(\omega) \quad \text{a.e. } t \in [0,T] \ ; \end{aligned} $
		\item \label{item: adjoint system+nontriviality+transversality atomic} 
		$( \lambda, \{p(.,\omega) : \omega \in \widehat{\Omega} \}) \neq (0,0)$, and for all $\omega \in \widehat{\Omega}=$supp$(\mu)$
		\begin{align} \nonumber &   
		\\ & \nonumber - \dot p(t,\omega) \in \text{co }\partial_x [p(t,\omega) \cdot f(t,\bar x(t,\omega),\bar u(t),\omega)] \quad \text{ a.e. } t \in [0,T],\\ & \text{and }\
		-p(T,\omega) \in \lambda \partial_x g(\bar x(T,\omega);\omega) + N^1_{C(\omega)}(\bar x(T,\omega))  \ . \nonumber
		\end{align}
	\end{enumerate}

\end{theorem}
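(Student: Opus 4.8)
The plan is to exploit the finite-support result (Proposition \ref{finitenco}) together with a truncation/limit argument tailored to the purely atomic structure. Write $\mu=\sum_{j=1}^{\infty}\alpha_j\delta_{\omega_j}$ with $\alpha_j>0$, $\sum_j\alpha_j=1$, and $\widehat\Omega=\mathrm{supp}(\mu)=\{\omega_j\}$ (the finite-support case being already covered by Proposition \ref{finitenco}). For each $N$ consider the truncated measure $\mu_N:=\big(\sum_{j=1}^{N}\alpha_j\big)^{-1}\sum_{j=1}^{N}\alpha_j\delta_{\omega_j}$, so that $\mu_N\stackrel{*}\rightharpoonup\mu$. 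The strategy is the usual Ekeland-penalization scheme (as in the proof of Theorem \ref{theorem_nco_general_case_probability_space} and in Vinter's minimax paper): perturb the cost by adding a term that penalizes the averaged endpoint constraint relative to $\mu_N$ and penalizes deviation of the control from $\bar u$; apply Ekeland's variational principle to obtain, for each $N$, a nearby minimizer to which the finite-support necessary conditions of Proposition \ref{finitenco} apply (with only finitely many parameters $\omega_1,\dots,\omega_N$ active); then pass to the limit $N\to\infty$. First I would set up the penalized problems and verify that the approximating minimizers converge to $(\bar u,\{\bar x(.,\omega)\})$ in the relevant sense, using hypotheses (\ref{A2_nco_general_case})--(\ref{A5_nco_general_case}) to control the dynamics and the moduli of continuity to control the endpoint and cost terms uniformly in $\omega$.

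Next I would extract the costate arcs. For each $N$ Proposition \ref{finitenco} yields a multiplier $\lambda_N\ge0$ and arcs $p_N(.,\omega_j)$, $j=1,\dots,N$, satisfying the adjoint inclusion, the transversality inclusion (with the normalized normal cone $N^1_{C(\omega)}$ after the appropriate rescaling and normalization of $(\lambda_N,\{p_N(.,\omega_j)\})$), and the $\mu_N$-averaged maximality condition. The uniform bounds in (\ref{A2_nco_general_case}) and (\ref{A4_ii_nco_general_case}) give, via Gronwall, equi-boundedness and equi-integrability of $\{p_N(.,\omega_j)\}_N$ for each fixed $j$, so by a diagonal argument over $j$ together with the Dunford--Pettis / Ascoli-type compactness in $W^{1,1}$ one obtains $\lambda_N\to\lambda$ and $p_N(.,\omega_j)\to p(.,\omega_j)$ (weakly in $W^{1,1}$, uniformly) along a subsequence, for every $j$. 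Passing to the limit in the adjoint and transversality inclusions for each fixed $\omega_j$ is then standard (closedness of the limiting subdifferential graph, convexification, and closedness of $N^1_{C(\omega_j)}$), giving condition (iii) on all of $\widehat\Omega=\mathrm{supp}(\mu)$; the nontriviality $(\lambda,\{p(.,\omega):\omega\in\widehat\Omega\})\neq(0,0)$ survives the limit because of the normalization chosen at each stage (here one uses, crucially, that $\widehat\Omega$ is countable so that a single normalization — e.g. $\lambda_N+\sup_j\|p_N(.,\omega_j)\|=1$ — can be maintained along the sequence). Finally, the $\mathcal L\times\mathcal B_\Omega$-measurability of $(t,\omega)\mapsto p(t,\omega)$ follows since $\Omega$ is countable on $\mathrm{supp}(\mu)$: $p$ is a countable patching of measurable functions in $t$.

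The maximality condition (ii) is obtained by passing to the limit in the $\mu_N$-averaged maximality condition
$$
\int_\Omega p_N(t,\omega)\cdot f(t,\bar x(t,\omega),\bar u(t),\omega)\,d\mu_N(\omega)=\max_{u\in U(t)}\int_\Omega p_N(t,\omega)\cdot f(t,\bar x(t,\omega),u,\omega)\,d\mu_N(\omega)\quad\text{a.e. }t.
$$
Because the integrals are genuinely countable sums $\sum_j\alpha_j\,p_N(t,\omega_j)\cdot f(t,\bar x(t,\omega_j),u,\omega_j)$ with a summable weight sequence $(\alpha_j)$ and a uniform bound $|p_N(t,\omega_j)\cdot f|\le c\,\sup_j\|p_N(.,\omega_j)\|$ coming from (\ref{A2_nco_general_case}) and the normalization, dominated convergence lets us replace $\mu_N$ by $\mu$ and $p_N$ by $p$ in both sides, term by term; the $\max$ over $u\in U(t)$ survives since it is a supremum of an equi-Lipschitz (in the integrand) family. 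I expect the main obstacle to be precisely this interchange of limit, sum and maximum in (ii) — making the tail $\sum_{j>N}$ uniformly negligible requires that the normalization keep $\sup_j\|p_N(.,\omega_j)\|$ bounded independently of $N$, which in turn must be reconciled with preserving nontriviality in the limit; this is exactly where the atomic (countable, summable-weight) structure makes the argument work and where the general-measure case of Theorem \ref{theorem_nco_general_case_probability_space} needs the heavier selection-theorem machinery instead. The remaining steps (Ekeland setup, Gronwall estimates, diagonal extraction, closedness of limiting normal/subdifferential graphs) are routine.
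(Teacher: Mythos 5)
Your architecture coincides with the paper's except in two places. The paper proves Theorem \ref{atomic} by observing that Steps 1--3 of the proof of Theorem \ref{theorem_nco_general_case_probability_space} already deliver (i) and (iii) (Lemma \ref{lemma_relation_sequence_dirac_measure_with_given_probability} builds $\widehat\Omega$ so as to contain all atoms, hence $\widehat\Omega=\mbox{supp}(\mu)$), and then replaces the selection-theorem machinery of Step 4 by an argument special to the atomic case. For (ii) you propose a direct passage to the limit; the paper instead argues by contradiction: if maximality failed on a set of positive measure, it finds $j_0$ and a set $B_{j_0}$ with $\mbox{meas}(B_{j_0})\ge\delta/2$ on which the defect is at least $1/j_0$, integrates the Step-2 approximate maximality over $B_{j_0}\cap A_{\rho_i'}$, splits the countable sum into a head, a discrepancy term and a tail, and reaches $\tfrac18\tfrac{\delta}{j_0}\le\tfrac1{16}\tfrac{\delta}{j_0}$. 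Your direct route rests on the same two ingredients (summable weights $\alpha_j$ controlling the tail, a uniform bound on the costates) and can be made to work, but the displayed identity you start from is not what the Ekeland scheme actually gives: at stage $i$ you only have an inequality with error $\rho_i'$, evaluated along $(x_i,u_i)$ rather than $(\bar x,\bar u)$, and valid only for $t$ in a set $A_{\rho_i'}$ that depends on $i$. To turn this into an a.e.-pointwise statement in the limit you need, e.g., $\sum_i\rho_i'<\infty$ along a subsequence plus Borel--Cantelli, or the paper's integrated/contradiction formulation; ``dominated convergence term by term'' by itself does not handle the $t$-dependence of the exceptional sets.

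The genuine gap is the nontriviality in (iii). You normalize by $\lambda_N+\sup_j\|p_N(.,\omega_j)\|=1$ and assert that nontriviality survives the limit, while yourself flagging the tension with keeping $\sup_j\|p_N\|$ bounded. That normalization does not deliver the conclusion: when $\lambda_N\to0$ the supremum can be (nearly) attained at indices $j_N\to\infty$, so for every fixed atom $\omega_j$ one may have $p_N(.,\omega_j)\to0$, and the limit pair $(\lambda,\{p(.,\omega)\})$ is $(0,0)$ even though each approximating family was normalized. The paper does not renormalize at all: the Max Rule form of the transversality condition already forces $\lambda_i\in[0,1]$ and $|p_i(T,\omega)|\le\lambda_ik_g+(1-\lambda_i)\le k_g+1$, which gives the uniform bound for free; and when $\lambda_i\to0$, nontriviality is extracted from the strict positivity of $\widetilde J_i(u_i)$, which forces the endpoint penalty $\sum_j\alpha_j^i d_{C(\omega_j^i)}(x_i(T,\omega_j^i))>0$ and hence produces an atom $\omega_{j_i}^i$ with $|p_i(T,\omega_{j_i}^i)|\ge(1-\lambda_i)-\lambda_ik_g\ge\tfrac12$. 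Some such quantitative lower bound tied to the constraint penalty is the missing idea in your proposal; without it the family you construct could be identically zero with $\lambda=0$.
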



\begin{theorem}[Smooth case]\label{proposition: smooth data nco}
	Let  $(\bar u,\{\bar x(.,\omega): \omega \in \Omega \})$ be a $W^{1,1}-$local minimizer for (\ref{intprob}) where $\mu \in \mathcal{M}(\Omega)$ is given. Suppose that, for some $\delta>0$, hypotheses (\ref{A3_nco_general_case})-(\ref{A2_nco_general_case}), (\ref{A4_nco_general_case})(i) and (\ref{A5_nco_general_case}) are satisfied. In addition, assume that
	\begin{enumerate}[label=(C\arabic*), ref=C\arabic*]
		\item \label{item: c1 assumption smooth data proposition nco} $g(.,\omega)$ is differentiable on $\bar x(T,\omega) + \delta \B$, for each $\omega \in \Omega$, and $\nabla_x g(.,.)$ is continuous;
		
		
		\item\label{item: c2 assumption smooth data proposition nco} $f(t,.,u,\omega)$ is continuously differentiable on $\bar{x}(t,\omega)+\delta \B$ for all $u\in U(t)$ and $\omega \in \Omega$ a.e. $t\in [0,T]$, and $\omega \rightarrow \nabla _x f(t,x,u,\omega)$ is uniformly continuous with respect to $(t,x,u) \in \{ (t',x',u')  \in [0,T] \times \R^n \times \R^m \ | \ u' \in U(t')  \}.$
		
		\item $C(\omega) := \R^n$.
	
	\end{enumerate}
	
 Then, there exists a $\mathcal{L}\times\mathcal{B}_\Omega$ measurable function $p(.,.): [0,T] \times \Omega \rightarrow \R^n$ such that	
	\begin{enumerate}[label=(\roman*)$'$, ref=(\roman*)$'$]
		\item \label{item: absolute continuity of adjoint arcs corollary} $p(.,\omega) \in W^{1,1}([0,T],\R^n)$ \quad for all $\omega \in $supp$(\mu)$; 
		\item \label{item: maximality condition corollary} $\begin{aligned}[t]  \int_\Omega p(t,\omega) \cdot  f(t,\bar x(t,\omega),\bar u(t),\omega) \ d\mu(\omega)  = \max_{u \in U(t)} \int_\Omega p(t,\omega) \cdot f(t,\bar x(t,\omega),u,\omega) \ \textrm{d}\mu(\omega) \quad \text{a.e. } t \in [0,T] \ ; \end{aligned} $
		\item \label{item: adjoint system corollary} 
		$-\dot p(t, \omega) = [\nabla_x f(t, \bar x (t, \omega), \bar u(t), \omega)]^T   p(t, \omega) $\quad a.e. $t\in [0,T]$, \ for all $\omega \in$ supp$(\mu)$; 
		\item \label{item: transversality condition corollary}
		$- p(T, \omega)  = \nabla_x g (\bar{x}(T,\omega);\omega), $\quad for all $\omega \in $supp$(\mu)$. 
\end{enumerate}	
	
\end{theorem}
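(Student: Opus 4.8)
The plan is to derive the statement as a \emph{regularity upgrade} of Theorem~\ref{theorem_nco_general_case_probability_space}: follow its proof in the present smooth, free right end-point setting — where the hard maximality/selection step collapses (the requirements (A4)(ii)--(iii), used there only for that step, are no longer needed) — to obtain $\lambda\ge0$, a $\mathcal L\times\mathcal B_\Omega$ measurable function $p(.,.)$ and a countable dense $\widehat\Omega\subset\text{supp}(\mu)$ satisfying conditions (i)--(iii) of that theorem, and then exploit smoothness to propagate the pointwise-in-$\omega$ conclusions from $\widehat\Omega$ to the whole support. Specialising (iii) to the smooth data: since $C(\omega)=\R^n$ we have $N^1_{C(\omega)}(\bar x(T,\omega))=\{0\}$; since $g(.,\omega)$ is continuously differentiable on $\bar x(T,\omega)+\delta\B$, $\partial_x g(\bar x(T,\omega);\omega)=\{\nabla_x g(\bar x(T,\omega);\omega)\}$; and since $f(t,.,\bar u(t),\omega)$ is $C^1$ there, $\text{co }\partial_x[q\cdot f(t,\bar x(t,\omega),\bar u(t),\omega)]=\{[\nabla_x f(t,\bar x(t,\omega),\bar u(t),\omega)]^{T}q\}$. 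Hence, for $\omega\in\widehat\Omega$ and the given $\lambda$, the set $\mathcal P(\omega)$ shrinks to (at most) the single arc solving the linear terminal-value problem
\begin{equation*}
-\dot q(t)=[\nabla_x f(t,\bar x(t,\omega),\bar u(t),\omega)]^{T}q(t)\ \ \text{a.e.}~t\in[0,T],\qquad -q(T)=\lambda\,\nabla_x g(\bar x(T,\omega);\omega),
\end{equation*}
which is well posed because the coefficient is dominated by the integrable function $k_f(.)$ of (\ref{A2_nco_general_case}). The non-triviality clause rules out $\lambda=0$: if $\lambda=0$, the only arc obeying the adjoint and transversality inclusions is $q\equiv0$, so $(\lambda,\{q(.,\omega):\omega\in\widehat\Omega\})=(0,0)$, i.e. $\mathcal P(\omega)=\varnothing$, contradicting $p(.,\omega)\in\text{co }\mathcal P(\omega)$. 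We may therefore normalise $\lambda=1$; then for every $\omega\in\widehat\Omega$ the arc $p(.,\omega)$ is exactly the solution of the displayed system, so (i)$'$, (iii)$'$, (iv)$'$ hold on $\widehat\Omega$, while (ii)$'$ holds for this $p$ because it is condition (ii).

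Second, I would extend (i)$'$, (iii)$'$, (iv)$'$ to all of $\text{supp}(\mu)$. For every $\omega\in\Omega$ let $\widetilde p(.,\omega)\in W^{1,1}([0,T];\R^n)$ be the unique solution of the displayed linear terminal-value problem with $\lambda=1$; this is well defined for all $\omega$ by (\ref{A2_nco_general_case}), (\ref{item: c1 assumption smooth data proposition nco}), (\ref{item: c2 assumption smooth data proposition nco}). The key analytic point is that $\omega\mapsto\widetilde p(.,\omega)$ is continuous from $\Omega$ into $C([0,T];\R^n)$, which I would prove by three Gronwall-type estimates: (a) $\omega\mapsto\bar x(.,\omega)$ is continuous into $C([0,T])$ — subtracting the integral equations for $\bar x(.,\omega_1)$, $\bar x(.,\omega_2)$, routing through a common argument and using the $x$-Lipschitz bound of (\ref{A2_nco_general_case}) together with (\ref{A5_nco_general_case}) gives $\|\bar x(.,\omega_1)-\bar x(.,\omega_2)\|_\infty\le\theta_f(\rho_\Omega(\omega_1,\omega_2))\,e^{\int_0^T k_f(s)\,ds}$ (for $\omega_2$ near $\omega_1$, so the trajectories stay in each other's $\delta$-tubes); (b) then $\omega\mapsto\nabla_x g(\bar x(T,\omega);\omega)$ is continuous by (\ref{item: c1 assumption smooth data proposition nco}), and $\omega\mapsto\nabla_x f(t,\bar x(t,\omega),\bar u(t),\omega)$ converges in $L^1(0,T)$ as $\omega'\to\omega$, since it converges pointwise a.e. (combine the uniform-in-$(t,x,u)$ continuity in $\omega$ of (\ref{item: c2 assumption smooth data proposition nco}), the continuity of $\nabla_x f$ in $x$, and (a)) and is dominated by $k_f(.)$; (c) feeding (a)--(b) into the integral form of the adjoint equation and applying Gronwall once more yields $\|\widetilde p(.,\omega')-\widetilde p(.,\omega)\|_\infty\to0$. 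Continuity in $\omega$ together with $W^{1,1}$-regularity in $t$ makes $\widetilde p$ a Carath\'eodory function, hence $\mathcal L\times\mathcal B_\Omega$ measurable; moreover $\widetilde p(.,\omega)=p(.,\omega)$ on $\widehat\Omega$, while $\widetilde p$ satisfies (i)$'$, (iii)$'$, (iv)$'$ for every $\omega\in\text{supp}(\mu)$ (indeed every $\omega\in\Omega$).

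The remaining — and hardest — point is to secure the averaged maximality condition (ii)$'$ for $\widetilde p$. When $\mu$ is non-atomic, $\widehat\Omega$ is $\mu$-negligible, so agreement of $\widetilde p$ with $p$ on $\widehat\Omega$ does not by itself identify the two integrands $\mu$-a.e., and (ii)$'$ cannot be read off from (ii). The way out is that, for smooth data, the costate $p$ delivered by Theorem~\ref{theorem_nco_general_case_probability_space} is itself continuous in $\omega$ on $\text{supp}(\mu)$: in the proof of that theorem $p$ arises as a pointwise limit of costate arcs depending continuously on the parameter, and in the smooth case this limit is forced into the set-valued map $\omega\mapsto\text{co }\mathcal P(\omega)$, which by the computation of the first paragraph and the continuous dependence of the second is single-valued and continuous. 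A continuous function agreeing with the continuous function $\widetilde p$ on the dense set $\widehat\Omega$ must coincide with it throughout $\text{supp}(\mu)$; hence $p=\widetilde p$ on $\text{supp}(\mu)$, the integrals in (ii) and (ii)$'$ are literally the same, and (ii)$'$ follows. (Equivalently, one reruns the proof of Theorem~\ref{theorem_nco_general_case_probability_space} directly here, where the finite-support approximation, the Ekeland step and the limit-taking produce a costate continuous in $\omega$.) This is where the genuine work lies, and the continuous-dependence estimates of the second paragraph must be handled with care, since $\Omega$ is merely a complete separable metric space, possibly non-compact, with $f$ and $\nabla_x f$ controlled only through the modulus $\theta_f$ and the uniform-in-$\omega$ continuity postulated in (\ref{A5_nco_general_case}) and (\ref{item: c2 assumption smooth data proposition nco}).
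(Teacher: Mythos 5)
Your proposal follows essentially the same route as the paper's proof of Theorem~\ref{proposition: smooth data nco}: run Steps 1--3 of the proof of Theorem~\ref{theorem_nco_general_case_probability_space}; observe that under (\ref{item: c1 assumption smooth data proposition nco})--(C3) the inclusions defining $\mathcal P(\omega)$ collapse to a linear terminal-value problem with a unique solution; rule out $\lambda=0$ via the non-triviality clause and normalise $\lambda=1$; and extend the costate from $\widehat\Omega$ to all of $\mathrm{supp}(\mu)$ using the well-posedness of that linear system (the paper does this by extracting limits along sequences $\widehat\omega_i\to\omega$ in $\widehat\Omega$ and invoking uniqueness of the limit equation; you define the solution directly for every $\omega$ and prove continuity in $\omega$ by Gronwall --- these are equivalent, and your continuity estimates are the same ones the paper uses implicitly).

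The one soft spot is your primary justification of the averaged maximality condition \ref{item: maximality condition corollary}. You correctly identify the difficulty (agreement of $p$ and $\widetilde p$ on the $\mu$-negligible dense set $\widehat\Omega$ proves nothing about the integrals), but your proposed resolution --- that ``the costate $p$ delivered by Theorem~\ref{theorem_nco_general_case_probability_space} is itself continuous in $\omega$'' because ``this limit is forced into $\mathrm{co}\,\mathcal P(\omega)$'' --- is not available from that theorem: both its statement and its proof constrain $p(.,\omega)$ to lie in $\mathrm{co}\,\mathcal P(\omega)$ \emph{only} for $\omega\in\widehat\Omega$; the function appearing in the maximality condition there is a weak $L^2_\mu$ limit of maps $p_K$ that are unconstrained off $\widehat\Omega$, and the pointwise extension (\ref{limit}) is set to $0$ wherever the simple-function limit fails, so no continuity on $\mathrm{supp}(\mu)$ can be read off. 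Your parenthetical fallback is the correct fix and is exactly what the paper does: rerun Step 4 with the set $D$ built from the singleton-valued, continuously varying map $\mathcal P_S(\omega)$ imposed for \emph{all} $\omega\in\mathrm{supp}(\mu)$, so that uniqueness of solutions forces the costate surviving the limit-taking to coincide with $\widetilde p$ throughout $\mathrm{supp}(\mu)$. With that reading your argument matches the paper's. A minor factual quibble: (\ref{A4_nco_general_case})(ii)--(iii) are not used ``only for the maximality/selection step'' in the proof of Theorem~\ref{theorem_nco_general_case_probability_space} --- they also enter the Ekeland step (continuity of $J_i$), Lemma~\ref{lemma_relation_sequence_dirac_measure_with_given_probability}(iii), and the uniform bound $|p_i(T,\omega_k)|\le k_g+1$; in the smooth case these uses must instead be covered by (\ref{item: c1 assumption smooth data proposition nco}), a point the paper itself glosses over.
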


\vskip3ex
\noindent
{\bf Comments}
\vskip2ex

	\noindent
	Condition (\ref{item: adjoint system+nontriviality+transversality: set L(omega)}) of Theorem \ref{theorem_nco_general_case_probability_space} is interpreted in the following sense: for each $\omega \in \widehat{\Omega}$, one considers functions $q(.,\omega) \in W^{1,1}([0,T], \R^n)$ (such that $ \| q(.,.) \|_{L^\infty}$ is uniformly bounded by a constant) satisfying the adjoint system
	\[   - \dot q(t,\omega) \in \text{co }\partial_x [q(t,\omega) \cdot f(t,\bar x(t,\omega),\bar u(t),\omega)] \quad \text{ a.e. } t \in [0,T] \ , \]
	and the transversality condition
	\[-q(T,\omega) \in \lambda \partial_x g(\bar x(T,\omega);\omega) + N^1_{C(\omega)}(\bar x(T,\omega))  \ . \]
	Then, from this set of functions, one takes into account only the $q(.,.)$'s such that
	\[ ( \lambda, \{q(.,\omega) : \omega \in \widehat{\Omega} \}) \neq (0,0) \]
	to generate the family of arcs sets of $\{ \mathcal{P}(\omega)\}_{\omega \in \widehat{\Omega}}$.

\medskip

\noindent			 
In optimal control theory, necessary optimality conditions results are usually provided avoiding the `trivial' case, which is given by the couple $(\lambda, p(.,.))=(0,0)$, where $\lambda$ is the multiplier associated with the cost. 
However, in literature dealing with optimal control problems with unknown parameters in the non-smooth context, results are often written including possible trivial cases which are not considered so relevant for the general properties expressed in the results statement; cf. \cite{vinter2005minimax} on nonsmooth minimax problems and \cite{warga1991nonsmooth} on nonsmooth adverse problems, in which the operator `${\rm co}$' (convexifying over sets of costate arcs) is considered possibly bringing trivial cases. (The fact that in \cite{vinter2005minimax} and \cite{warga1991nonsmooth} the multiplier associated with the cost $\lambda$ does not appear in the necessary conditions should not be so surprising: this multiplier is somewhat hidden in the analysis and, in these contexts, the situation `$p\equiv 0$' alone might be considered as `trivial'). 
In our case, we might have a trivial couple $(\lambda=0, p(.,.)=0)$ which satisfies the conditions of Theorem \ref{theorem_nco_general_case_probability_space}, indeed, employing the convexification operator `${\rm co}$' on the set of costate arcs, it may happen that, taking $\lambda=0$, even if $ p(.,\hat \omega)\neq 0$, with $\hat \omega \in \widehat \Omega$, also $-p(.,\hat \omega)$ is an admissible costate arc; convexifying, $p\equiv 0 \in \text{co }\mathcal{P}(\hat \omega)$.  
We decided to be consistent with part of previous (nonsmooth) literature on problems with unknown parameters and provide a general nonsmooth result (Theorem \ref{theorem_nco_general_case_probability_space}), which allows (in some particular circumstances) a trivial case, but at the same time covers a number of non-restrictive non-trivial cases. For instance,  (\ref{item: adjoint system+nontriviality+transversality: set L(omega)}) of Theorem \ref{theorem_nco_general_case_probability_space} immediately implies a non-triviality condition for the pair $(\lambda,p(.,.))$ when
			\begin{enumerate}
				\item[(a)]	the right end-point constraints are absent ($C(\omega)\equiv \R^n$);
				\item[(b)]	the given measure $\mu$ has a nonatomic component, the averaged right end-point constraints \[ \int_{\Omega} d_{C(\omega)} (x(T,\omega)) \ d\mu(\omega)=0\] are imposed  but the normal cone to the end-point constraint ${\rm co } N_{C(\omega)}(\bar x (T, \omega))$ is pointed for all $\omega \in \Omega$ (or even for $\omega$ belonging to a countable dense subset of the support of the nonatomic component of $\mu$). We recall that a convex cone $K \subset \R^n$ is said to be `pointed' if for any nonzero elements $d_1, \ d_2 \in K,$ $d_1 + d_2 \neq 0$.
			\end{enumerate}
		Concerning (b), the abnormal situation (i.e. $\lambda=0$) is admissible, but the fact that ${\rm co } N_{C(\omega)}(\bar x (T, \omega))$ is pointed ensures that $p\equiv 0 \notin \text{co }\mathcal{P}(\hat \omega)$ for all $\hat \omega \in \widehat{\Omega}$.
			
			\medskip
			
\noindent
			The `degeneracy issue' (i.e. the necessary conditions are satisfied by any control) is a longstanding issue which has been widely investigated in optimal control. It is well-known that this issue may arise, for instance, in presence of state constraints for `standard' (in the sense that parameters are absent) optimal control problems (cf. \cite[Chapter X]{vinter2010optimal} and the references therein). Rather less is known for the case when unknown parameters intervene in the dynamics and the cost: minimax, adverse, and average optimal control problems. (See \cite{karamzin2016minimax} for a non-degeneracy result on state constrained minimax problems avoiding the degeneracy caused by the state constraint; see also \cite{vinter2005minimax} for a link between minimax and state-constrained problems). In our context degeneracy might occur for the general nonsmooth case (Theorem \ref{theorem_nco_general_case_probability_space}) when the given measure $\mu$ has a nonatomic component.
			Indeed, our construction of the costate arcs $p(t,\omega)$ for $\omega\in \Omega$ is based on a limit-taking procedure starting from the information provided by (non-trivial) costate arcs $p(t,\hat \omega)$ for $\hat \omega\in \widehat{\Omega}$ (cf. (\ref{limit}) below).  If $\mu$ has a nonatomic component, we have no reason to expect (under the general assumptions considered in Theorem \ref{theorem_nco_general_case_probability_space}) that the non-degenerate property of the costate arcs $p(t,\hat \omega)$ ($\hat \omega\in \widehat{\Omega}$) always propagates on $\Omega$ as desired: there might be some degenerate situations in which for a full-measure subset of $\Omega$ the limit we take in the proof of Theorem \ref{theorem_nco_general_case_probability_space} does not exist, and $p(.,.)$ extends with the value zero on $\Omega\setminus \widehat{\Omega}$, obtaining a degeneracy issue. 
			However, under some circumstances, the information provided on the set $\widehat{\Omega}$ does propagate: if there is no right end-point constraint and, in addition, we impose regularity assumptions on the dynamics and the terminal cost function, properties (\ref{item: absolute continuity of adjoint arcs general case}) and (\ref{item: adjoint system+nontriviality+transversality: set L(omega)}) of Theorem \ref{theorem_nco_general_case_probability_space} extend to the whole parameter set $\Omega$, as stated in Theorem \ref{proposition: smooth data nco}. 
			Theorems \ref{atomic} and \ref{proposition: smooth data nco} do provide non-degenerate results.
			%
			
	
	\vskip2ex
			\noindent	
			Nonsmooth results on optimal control problems with unknown parameters, such as adverse and minimax problems (see \cite{warga1991nonsmooth} and \cite{vinter2005minimax}), are concerned with a `degenerate issue' which is not far from the one of our nonsmooth result Theorem \ref{theorem_nco_general_case_probability_space}, maybe, in a more `dramatic' way, for the measure -appearing there as a multiplier in the necessary conditions- is not uniquely determined, and may have a support with degenerate effects on the necessary conditions.
Consider for instance the simple example \cite[Example 4.1]{vinter2005minimax} in the context of minimax problem:
			$$
			\left\{
			\begin{array}{lll}
			{\bf minimize} \; \max_{\omega \in \Omega:=[-1,1]} -|x(1)- \omega|\\
			\dot{x}(t) = u(t) \quad \textrm{a.e. } t \in [0,1]\\
			u(t) \in [-1,1] \quad \mbox{a.e.} \; t \in [0,1]\\
			\mbox{and } x(0)=0 .
			\end{array}
			\right.
			$$
			A minimax minimizer is: $(\bar{x}\equiv 0, \bar{u}\equiv 0)$. In \cite{vinter2005minimax} there is a detailed discussion comparing 
			\cite[Proposition 2.1]{vinter2005minimax} (finite parameter sets case) and \cite[Theorem 3.1]{vinter2005minimax} (general nonsmooth case), and the necessity of convexifying the set of costate arcs in the general nonsmooth case, for, otherwise, the necessary conditions would not be valid. In particular, in \cite{vinter2005minimax} the (Dirac) measure $\delta_{\omega=0}$ concentrated at $\omega=0$ (point at which the reference minimizer attains its maximum) is considered, for which ``an arbitrary collection of $W^{1,1}$ functions such that $p(.;\omega=0)\equiv 0$'' satisfies the necessary conditions of \cite[Theorem 3.1]{vinter2005minimax}. The counterpart of this choice is that it is degenerate: any control satisfies the necessary conditions of \cite[Theorem 3.1]{vinter2005minimax}. 
			
			\noindent
			One might go a little bit further in this direction, observing that degeneracy is -in fact- much more dramatic for this particular example: indeed, for any probability measure $\mu$ on the parameter set $\Omega=[-1,1]$ the maximality conditions of \cite[Theorem 3.1]{vinter2005minimax} are necessarily degenerate for the reference minimax minimizer $(\bar{x}\equiv 0, \bar{u}\equiv 0)$ (and the trivial case $p\equiv 0$ is also admitted). 
			On the other hand, if one is interested in the different performance criterion given by the average cost $\int_{\omega \in [-1,1]} -|x(1)- \omega| d\omega/2$ with the same dynamics, these dramatic issues of triviality and degeneracy disappear. (To see this, we can take, for instance, the average minimizer $(\bar{x}(t)=t, \bar{u}\equiv 1)$ associated with the costate $p\equiv 1$.)

			\medskip

\noindent
			At first glance our results might look similar to some statements on necessary conditions appearing in \cite{warga2014optimal} and \cite{warga1991nonsmooth}. Not only these results do not cover the class of average cost minimization problems (in the sense of our paper), but we also highlight a crucial aspect concerning the completely different role of the measures entering in the picture of the necessary conditions: 
in Warga's framework the existence of a positive Radon measure (on the set of ‘adverse’ relaxed controls) is a necessary condition, and this measure plays the role of a `multiplier'. In our context (of average control problems) the probability measure $\mu$ is a given datum, and we underline the fact that our objective is to give necessary conditions w.r.t. the given measure $\mu$.
			
			\medskip
\noindent
			We finally observe that the construction of the countable set $\widehat \Omega$ proposed in this paper could be useful for applications: it provides a constructive way to approximate the reference measure $\mu$ by means of a sequence of convex combinations of Dirac measures concentrated at points of $\widehat \Omega$.  Therefore the set $\widehat \Omega$ can be considered as a reference set of parameters $\omega$'s for which one starts computing the costate arcs $p(.,\omega)$ and, eventually, derives conditions for optimal controls. 

\section{Preliminary results in measure theory}

This section is devoted to display results which will be relevant for the proofs of Theorems \ref{theorem_nco_general_case_probability_space}, \ref{atomic} and \ref{proposition: smooth data nco}. 
We shall make repeatedly use of the following theorem (also referred to as Portmanteau Theorem, cf. \cite[Theorem 4.5.1]{ash2014measure} or \cite[Theorem 6.1. pp. 40]{parthasarathy2005probability}) which provides conditions  characterizing the weak$^*$ convergence of probability measures on a metric space $(\Omega,\rho_\Omega)$. 

\begin{theorem} \label{portmanteauTheorem} Let $(\Omega,\rho_\Omega)$ be a metric space. Take a sequence of measures $\{ \mu_i  \}$ in $ \mathcal{M}(\Omega)$ and a measure $\mu \in \mathcal{M}(\Omega)$. The following conditions are equivalent:
	\begin{enumerate}[label=(\alph*), ref=\alph*]
		\item \label{(a0)_portmanteau} $\int_\Omega h \textrm{d} \mu_i \rightarrow \int_\Omega h \textrm{d} \mu $ for any bounded continuous function $h$ on $\Omega$ (i.e. $ \mu_i \stackrel{*}\rightharpoonup \mu $)\ ;
		\item \label{(a)_portmanteau} $\int_\Omega h \textrm{d} \mu_i \rightarrow \int_\Omega h \textrm{d} \mu $ for any bounded uniformly continuous function $h$ on $\Omega$\ ;
		\item \label{(b)_portmanteau} $\lim \mu_i (B) = \mu(B)$ for every Borel set $B$ whose boundary has $\mu-$measure zero. (Such sets are also referred to as $\mu-$continuity sets)\ ;
		\item \label{(c)_portmanteau} $\limsup \mu_i (C) \leq \mu(C)$ for every closed set $C$ in $\Omega$\ ;
		\item \label{(d)_portmanteau}$\liminf \mu_i (E) \geq \mu(E)$ for every open set $E$ in $\Omega$\ . 
	\end{enumerate}
\end{theorem}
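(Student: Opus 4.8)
The plan is to prove the five conditions equivalent by establishing the cyclic chain $(\ref{(a0)_portmanteau}) \Rightarrow (\ref{(a)_portmanteau}) \Rightarrow (\ref{(c)_portmanteau}) \Leftrightarrow (\ref{(d)_portmanteau}) \Rightarrow (\ref{(b)_portmanteau}) \Rightarrow (\ref{(a0)_portmanteau})$, exploiting the metric structure of $\Omega$ only in order to manufacture Lipschitz test functions from distance functions. The implication $(\ref{(a0)_portmanteau}) \Rightarrow (\ref{(a)_portmanteau})$ is immediate, since a bounded uniformly continuous function is in particular bounded and continuous. For $(\ref{(a)_portmanteau}) \Rightarrow (\ref{(c)_portmanteau})$, given a closed set $C \subseteq \Omega$ I would introduce $h_k(\omega) := \max\{0,\, 1 - k\, d_C(\omega)\}$ for $k \in \N$; each $h_k$ takes values in $[0,1]$ and is $k$-Lipschitz, hence bounded and uniformly continuous, and $h_k \downarrow \mathbf 1_C$ pointwise as $k \to \infty$. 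From $\mathbf 1_C \le h_k$ and $(\ref{(a)_portmanteau})$ one gets $\limsup_i \mu_i(C) \le \limsup_i \int_\Omega h_k\, d\mu_i = \int_\Omega h_k\, d\mu$ for each $k$, and letting $k \to \infty$ by dominated convergence yields $\limsup_i \mu_i(C) \le \mu(C)$. The equivalence $(\ref{(c)_portmanteau}) \Leftrightarrow (\ref{(d)_portmanteau})$ comes from complementation: if $E$ is open then $\Omega \setminus E$ is closed and, since all measures have total mass $1$, $\liminf_i \mu_i(E) = 1 - \limsup_i \mu_i(\Omega \setminus E) \ge 1 - \mu(\Omega\setminus E) = \mu(E)$, and symmetrically in the other direction.

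For $(\ref{(c)_portmanteau}) \wedge (\ref{(d)_portmanteau}) \Rightarrow (\ref{(b)_portmanteau})$ I would take a Borel set $B$ with $\mu(\partial B) = 0$ and write $B^{\circ}$ for its interior and $\overline B$ for its closure, so that $\mu(B^{\circ}) = \mu(B) = \mu(\overline B)$. Applying $(\ref{(d)_portmanteau})$ to the open set $B^{\circ}$ and $(\ref{(c)_portmanteau})$ to the closed set $\overline B$ gives
\[
\mu(B) = \mu(B^{\circ}) \le \liminf_i \mu_i(B^{\circ}) \le \liminf_i \mu_i(B) \le \limsup_i \mu_i(B) \le \limsup_i \mu_i(\overline B) \le \mu(\overline B) = \mu(B),
\]
so $\mu_i(B) \to \mu(B)$.

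It remains to prove $(\ref{(b)_portmanteau}) \Rightarrow (\ref{(a0)_portmanteau})$, which I expect to be the main — though still routine — step. Given a bounded continuous $h$, after adding a constant (harmless, since shifting $h$ by $c$ shifts $\int_\Omega h\, d\nu$ by exactly $c$ for every probability measure $\nu$) one may assume $0 \le h \le M$. Then the layer-cake formula gives $\int_\Omega h\, d\nu = \int_0^M \nu(\{h > t\})\, dt$ for $\nu \in \{\mu_i,\ \mu\}$. Continuity of $h$ forces $\partial\{h > t\} \subseteq \{h = t\}$, and since the sets $\{h=t\}$, $t\in\R$, are pairwise disjoint with $\sum_t \mu(\{h=t\}) \le 1$, one has $\mu(\{h=t\})=0$ — hence $\{h>t\}$ is a $\mu$-continuity set — for all but countably many $t\in[0,M]$. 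By $(\ref{(b)_portmanteau})$, $\mu_i(\{h>t\}) \to \mu(\{h>t\})$ for almost every $t\in[0,M]$, and the integrands being uniformly bounded by $1$, the dominated convergence theorem in the variable $t$ gives $\int_\Omega h\, d\mu_i \to \int_\Omega h\, d\mu$, i.e. $(\ref{(a0)_portmanteau})$.

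The only genuinely delicate ingredient is this last implication, where the layer-cake representation must be combined with the fact that a finite measure on the real line charges only countably many points; everything else is bookkeeping with $\limsup$/$\liminf$ and the complementation trick, and the metric structure of $\Omega$ is used solely in $(\ref{(a)_portmanteau}) \Rightarrow (\ref{(c)_portmanteau})$, through the Lipschitz cutoffs $h_k$ built from $d_C$.
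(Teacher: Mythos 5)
Your proof is correct, and it is the standard textbook argument for the Portmanteau theorem: Lipschitz cutoffs built from $d_C$ for (b)$\Rightarrow$(d), complementation for (d)$\Leftrightarrow$(e), the interior/closure sandwich for (c), and the layer-cake formula together with the fact that only countably many level sets $\{h=t\}$ can carry positive $\mu$-mass for (c)$\Rightarrow$(a). The paper itself offers no proof of this statement --- it is recalled as a classical result with citations to Ash and to Parthasarathy --- and your argument is essentially the one found in those references, so there is nothing further to compare.
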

\noindent

\noindent
We recall that $\mu \in \mathcal{M}(\Omega)$ is said to be {\it tight} if for each $\varepsilon>0$, there exists a compact set $K_\varepsilon \subset \Omega$ such that $\mu(\Omega \setminus K_\varepsilon) < \varepsilon$. A very well-known result asserts that when $(\Omega,\rho_\Omega)$ is a complete separable metric space, then every $\mu \in \mathcal{M}(\Omega)$ is tight (cf. \cite[Theorem 3.2.  pp. 29]{parthasarathy2005probability}).
We shall invoke also a generalized version of the Prokhorov's Theorem \cite[Theorem 8.6.2]{bogachev2007measure} which provides a useful characterization of the relatively compact subsets of Borel measures on $\Omega$, when $\Omega$ is a complete separable metric space. 
This result will be crucial to derive measure convergence properties (see Lemma \ref{weakconvergence} below).

\begin{theorem}[Generalized Prokhorov Theorem] \label{theorem_prokhorov_tight_relativelyCompact} \label{theorem_prokhorov_tight_relativelyCompact_generalized} Let $(\Omega,\rho_\Omega)$ be a complete separable metric space and consider a family $\Upsilon$ of Borel measures on $\Omega$. Then, $\Upsilon$ is relatively compact if and only if $\Upsilon$ is uniformly tight and uniformly bounded in the variation norm; in particular a sequence of measures $\{\mu_i\}$ admits a weakly$^*$ convergent subsequence if and only if the sequence $\{\mu_i\}$ is uniformly tight and uniformly bounded in the variation norm.
\end{theorem}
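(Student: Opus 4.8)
The plan is to prove the equivalence by handling the two implications separately: the implication ``uniformly tight and uniformly bounded in variation $\Rightarrow$ relatively compact'', which is the form actually invoked later, carries the real content, while the converse is a soft functional-analytic argument. As a preliminary reduction I would pass to non-negative measures: writing each $\mu\in\Upsilon$ via its Jordan decomposition $\mu=\mu^{+}-\mu^{-}$, the uniform variation bound controls $\|\mu^{\pm}\|$, while ``uniformly tight'' (understood, for signed measures, as uniform tightness of the total variation measures $|\mu|=\mu^{+}+\mu^{-}$) and relative weak$^{*}$ compactness both pass to the families $\{\mu^{+}\}$ and $\{\mu^{-}\}$; so it suffices to argue for families of finite non-negative Borel measures.

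For the sufficiency direction, given $\{\mu_i\}\subset\Upsilon$ with $\sup_i\mu_i(\Omega)\le M<\infty$ and uniformly tight, I would first pick an increasing sequence of compact sets $K_1\subseteq K_2\subseteq\cdots$ with $\sup_i\mu_i(\Omega\setminus K_k)<1/k$. On each compact metric space $K_k$ the space $C(K_k)$ is separable, so the $M$-ball of $\mathcal{M}(K_k)$ is sequentially weak$^{*}$ compact (Banach--Alaoglu together with separability, i.e.\ a Helly-type selection); restricting the $\mu_i$ to $K_k$ and diagonalising over $k$ yields a single subsequence $\{\mu_{i_j}\}$ and non-negative measures $\nu_k$ on $K_k$ with $\mu_{i_j}|_{K_k}\rightharpoonup^{*}\nu_k$ for every $k$. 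After checking the $\nu_k$ are consistent I would assemble one finite non-negative Borel measure $\nu$ on $\Omega$ with $\nu(\Omega)\le M$, and finally verify $\mu_{i_j}\rightharpoonup^{*}\nu$: for $h\in C_b(\Omega)$ and fixed $k$ one splits $\int h\,d\mu_{i_j}-\int h\,d\nu$ into the part on $K_k$, which tends to $0$ as $j\to\infty$ by weak$^{*}$ convergence on $K_k$ (here $h|_{K_k}$ is already continuous, so no extension is needed), and the tails on $\Omega\setminus K_k$, bounded by $\|h\|_\infty\,(1/k+\nu(\Omega\setminus K_k))$ uniformly in $j$ and hence negligible for large $k$.

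For the necessity direction, suppose $\Upsilon$ is relatively weak$^{*}$ compact with closure $\overline{\Upsilon}$. The variation bound follows from Banach--Steinhaus: each functional $\mu\mapsto\int h\,d\mu$, $h\in C_b(\Omega)$, is weak$^{*}$-continuous hence bounded on the compact set $\overline{\Upsilon}$, and since every finite Borel measure on a Polish space is Radon one has $\|\mu\|_{\mathrm{var}}=\sup\{|\int h\,d\mu|:h\in C_b(\Omega),\ \|h\|_\infty\le 1\}$, so $\sup_{\overline{\Upsilon}}\|\cdot\|_{\mathrm{var}}<\infty$. For uniform tightness I would argue by contradiction using separability: with $\{x_\ell\}$ a dense set, set $A_{L,k}:=\bigcup_{\ell\le L}\{y:\rho_\Omega(y,x_\ell)<1/k\}$, an open set increasing to $\Omega$ as $L\to\infty$. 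If for some $k$ and some $\varepsilon_0>0$ no $L$ satisfied $\sup_\mu\mu(\Omega\setminus A_{L,k})<\varepsilon_0$, pick $\mu_L$ with $\mu_L(\Omega\setminus A_{L,k})\ge\varepsilon_0$ and extract $\mu_{L_j}\rightharpoonup^{*}\nu$; since $\Omega\setminus A_{L,k}$ is closed and contains $\Omega\setminus A_{L_j,k}$ whenever $L_j\ge L$, Theorem \ref{portmanteauTheorem}\,\ref{(c)_portmanteau} gives $\nu(\Omega\setminus A_{L,k})\ge\limsup_j\mu_{L_j}(\Omega\setminus A_{L,k})\ge\varepsilon_0$ for all $L$, contradicting $\nu(\Omega\setminus A_{L,k})\downarrow 0$ ($\nu$ being a finite Borel measure on a Polish space, hence tight). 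Then choosing for each $k$ an index $L_k$ with $\sup_\mu\mu(\Omega\setminus A_{L_k,k})<\varepsilon\,2^{-k-1}$, the set $K:=\bigcap_k\overline{A_{L_k,k}}$ is closed and totally bounded, hence compact by completeness, and $\sup_\mu\mu(\Omega\setminus K)<\varepsilon$.

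The step I expect to be the main obstacle is the bookkeeping in the sufficiency direction: making the diagonal limit $\nu$ a bona fide finite Borel measure on \emph{all} of $\Omega$, and ensuring $\int_{K_k}h\,d\mu_{i_j}\to\int_{K_k}h\,d\nu$ without a priori control of $\nu(\partial K_k)$. The clean remedy is to enlarge each $K_k$ slightly to an \emph{open} set $G_k\supseteq K_k$ with $\sup_i\mu_i(G_k\setminus K_k)$ still controlled and to run every estimate on $\overline{G_k}$ (equivalently, to select the exhausting sets from the outset as finite unions of balls whose thin boundaries carry negligible mass); once this is arranged the rest is routine. By comparison the necessity direction is soft, its only subtlety being the appeal to regularity of finite Borel measures on a Polish space in order to identify the variation norm with the $C_b$-dual norm.
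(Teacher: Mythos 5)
The paper does not prove this statement at all: it is quoted verbatim from Bogachev \cite[Theorem 8.6.2]{bogachev2007measure}, so there is no ``paper proof'' to compare against. Judged on its own terms, your sufficiency direction (uniform tightness plus a uniform variation bound implies sequential weak$^*$ compactness) is the standard Prokhorov diagonal argument and is sound: the Jordan decomposition legitimately reduces to non-negative measures there, the Helly-type extraction on each $C(K_k)^*$ is correct, and the boundary/consistency bookkeeping you flag is a real but routine issue with the standard fix you describe. This is the only direction the paper uses for genuinely signed measures (the $\eta_{i,k}^{\pm}$ in Lemma \ref{weakconvergence}), and your necessity argument is also correct for non-negative measures, which is all that is needed when the paper invokes necessity (for the weakly$^*$ convergent probability measures $\mu_i$).

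There is, however, a genuine gap in your necessity direction for \emph{signed} measures, and it sits exactly in the sentence where you claim that relative weak$^*$ compactness ``passes to the families $\{\mu^{+}\}$ and $\{\mu^{-}\}$.'' The map $\mu\mapsto\mu^{+}$ is not weak$^*$ continuous (e.g.\ $\mu_n=\sin(2\pi nx)\,dx$ on $[0,1]$ converges weakly$^*$ to $0$ while $\mu_n^{+}\rightharpoonup^{*}\pi^{-1}dx$), and the only general route from relative compactness of $\{\mu\}$ to relative compactness (or uniform tightness) of $\{\mu^{\pm}\}$ is precisely the implication ``relatively compact $\Rightarrow$ uniformly tight in total variation'' that you are trying to prove; as written the reduction is circular. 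Your subsequent tightness argument also breaks for signed measures on its own terms: failure of uniform tightness gives $|\mu_L|(\Omega\setminus A_{L,k})\ge\varepsilon_0$, not $\mu_L(\Omega\setminus A_{L,k})\ge\varepsilon_0$, and Theorem \ref{portmanteauTheorem}\,(\ref{(c)_portmanteau}) controls only the signed limit, which can vanish by cancellation while the variations escape to infinity. Closing this requires a separate argument (a sliding-hump construction producing an $h\in C_b(\Omega)$ along a subsequence, in the spirit of the Dieudonn\'e--Grothendieck theorem, as in Bogachev's proof), not the Jordan-decomposition shortcut.
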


\noindent
We consider now subsets $D$ and $D_i$, for $i=1,2, \ldots$, of $\Omega \times \R^K$. We denote respectively by $D(.), \ D_i(.): \Omega \leadsto \R^K$ the multifunctions defined as
\[ D(\omega) := \{ z \in \R^K \ : \ (\omega,z) \in D    \} \quad \text{and }   \quad
 D_i(\omega) := \{ z \in \R^K \ : \ (\omega,z) \in D_i    \} \quad \text{for all } i=1,2,\ldots \ .  \]
Let $\{\mu_i\}$ be a weak$^*$ convergent sequence of measures in $\mathcal{M}(\Omega)$. Our aim is to justify the limit-taking of sequences like
\[ d\eta_i(\omega) = \gamma_i(\omega)\  d \mu_i(\omega) \qquad i=1,2,\ldots   \] in which $\{ \gamma_i (\omega) \}$ is a sequence of 
Borel measurable functions satisfying \[\gamma_i (\omega) \in D_i(\omega) \quad \mu_i- \textrm{a.e. } \]

\noindent
The required convergence result is provided by Lemma \ref{weakconvergence} below, which represents an extension of \cite[Proposition 9.2.1]{vinter2010optimal} and \cite[Proposition 6.1]{vinter2005minimax} to the case in which $\Omega$ is an arbitrary complete separable metric space (not necessarily compact).

\begin{lemma}
	 \label{weakconvergence} Let $\Omega$ be a complete separable metric space. Consider a sequence of measures $\{\mu_i\}$ in $\mathcal{M}(\Omega)$ such that $\mu_i \stackrel{*}\rightharpoonup \mu$ for some $\mu \in \mathcal{M}(\Omega)$, a sequence of sets $\{ D_i \subset \Omega \times \R^K \}$ such that
	 \begin{equation} \label{limsup_inclusion} \limsup\limits_{i \rightarrow \infty} D_i \subset D \ , \end{equation}
	 for some closed set $D \subset \Omega \times \R^K$, and a sequence $\{ \gamma_i : \Omega \rightarrow \R^K \}$ of Borel functions.
	 Suppose that
	 \begin{enumerate}[label = (\roman{*}), ref= \roman{*}]
	 	\item \label{item: i of proposition weak convergence} $D(\omega)$ is convex for each $\omega \in \text{dom }D(.)$;
	 	\item\label{item: ii of proposition weak convergence} the multifunctions $\omega \leadsto D(\omega)$ and $\omega \leadsto D_i(\omega)$, for all $i$, are uniformly bounded;
	 	\item\label{item: iii of proposition weak convergence} for each $i=1,2,\ldots$, 
		$ \gamma_i(\omega) \in D_i(\omega) \quad \mu_i-\text{a.e.}  \text{ and }  \text{supp}(\mu_i) \subset \text{dom }D_i(.) .$
	 \end{enumerate}
	 Define, for each $i$, the vector of signed measures $\eta_i:=\gamma_i\mu_i$. Then, along a subsequence, we have
	 \[  \eta_i \stackrel{*}\rightharpoonup \eta    \] where $\eta$ is a vector-valued Borel measure on $\Omega$ such that
	 \[ d\eta (\omega) = \gamma(\omega) \ d\mu(\omega)    \] for some Borel measurable function $\gamma : \Omega \rightarrow \R^K$ satisfying
	 \[ \gamma(\omega) \in D(\omega)   \quad \mu-\text{a.e. } \omega \in \Omega  \ . \]
	\end{lemma}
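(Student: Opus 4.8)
The plan is to first establish weak$^*$ precompactness of the sequence $\{\eta_i\}$ of vector measures, extract a convergent subsequence, identify the limit as absolutely continuous with respect to $\mu$, and finally show the density lies in $D(\omega)$ for $\mu$-a.e.\ $\omega$. For the precompactness: since each $\mu_i \in \mathcal{M}(\Omega)$ and $\Omega$ is complete separable, each $\mu_i$ is tight; moreover the weak$^*$-convergent sequence $\{\mu_i\}$ is \emph{uniformly} tight (this is exactly the easy direction of the Generalized Prokhorov Theorem \ref{theorem_prokhorov_tight_relativelyCompact_generalized} applied to the convergent sequence $\{\mu_i\}$, together with tightness of the limit $\mu$). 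By hypothesis \eqref{item: ii of proposition weak convergence} the multifunctions $D_i(.)$ are uniformly bounded, say by a constant $R>0$; hence $|\gamma_i(\omega)| \le R$ for $\mu_i$-a.e.\ $\omega$ by \eqref{item: iii of proposition weak convergence}. Therefore each component of $\eta_i = \gamma_i \mu_i$ has total variation $\le R$, so $\{\eta_i\}$ is uniformly bounded in variation norm; and uniform tightness of $\{\mu_i\}$ transfers to $\{\eta_i\}$ because $|\eta_i|(\Omega \setminus K) \le R\,\mu_i(\Omega\setminus K)$. Componentwise, the Generalized Prokhorov Theorem \ref{theorem_prokhorov_tight_relativelyCompact_generalized} then yields a subsequence (not relabelled) with $\eta_i \stackrel{*}\rightharpoonup \eta$ for some $\R^K$-valued Borel measure $\eta$ on $\Omega$.

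Next I would show $\eta \ll \mu$ and extract the density. Each component $\eta^{(\ell)}_i$ satisfies $|\eta^{(\ell)}_i| \le R\,\mu_i$ as measures; I want to pass this to the limit. For any closed set $C$, by Portmanteau \ref{portmanteauTheorem}\eqref{(c)_portmanteau} applied to $\mu_i \stackrel{*}\rightharpoonup \mu$ and to the (signed) limit relation, one gets $|\eta^{(\ell)}|(C) \le R\,\mu(C)$, hence by outer regularity $|\eta^{(\ell)}| \le R\,\mu$ on all Borel sets; in particular $|\eta| \le \sqrt{K}\,R\,\mu$, so $\eta \ll \mu$. By the Radon--Nikodym theorem there is a Borel measurable $\gamma : \Omega \to \R^K$ with $d\eta = \gamma\,d\mu$, and the variation bound forces $|\gamma(\omega)| \le \sqrt{K}\,R$ for $\mu$-a.e.\ $\omega$, so $\gamma \in L^\infty(\mu;\R^K)$.

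The main obstacle is the last step: showing $\gamma(\omega) \in D(\omega)$ for $\mu$-a.e.\ $\omega$. This is where convexity \eqref{item: i of proposition weak convergence}, the upper-limit inclusion \eqref{limsup_inclusion}, and closedness of $D$ enter, and it is the genuinely delicate point. The strategy I would follow mirrors \cite[Proposition 9.2.1]{vinter2010optimal} and \cite[Proposition 6.1]{vinter2005minimax}: argue by contradiction using a separation argument. If $\gamma(\omega)\notin D(\omega)$ on a set of positive $\mu$-measure, then (using that $D(\omega)$ is closed and convex, and a measurable-selection argument to choose separating hyperplanes measurably, e.g.\ via the Castaing representation of $D(.)$) one finds a compact set $Q\subset\Omega$ with $\mu(Q)>0$, a continuous (or simple, then approximated) $\R^K$-valued function $a(\cdot)$ and $\varepsilon>0$ with $a(\omega)\cdot z \le a(\omega)\cdot\gamma(\omega) - \varepsilon$ for all $z\in D(\omega)$, $\omega\in Q$. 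Testing $\eta_i \stackrel{*}\rightharpoonup\eta$ against $a(\cdot)\mathbf 1_Q$ (suitably regularized to a bounded continuous function, using tightness and inner regularity to localize on $Q$) gives $\int_Q a\cdot\gamma\,d\mu = \lim_i \int_Q a(\omega)\cdot\gamma_i(\omega)\,d\mu_i(\omega)$; but for large $i$, \eqref{limsup_inclusion} forces $\gamma_i(\omega)$ to lie in an $\varepsilon/2$-neighbourhood of $D(\omega)$ for $\omega$ near $Q$, so $a(\omega)\cdot\gamma_i(\omega) \le a(\omega)\cdot\gamma(\omega)-\varepsilon/2$ there, and integrating against $\mu_i$ and passing to the limit (Portmanteau for the set where the estimate holds) contradicts the equality. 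The care needed in handling the non-compactness of $\Omega$ — replacing the compact-$\Omega$ arguments of the cited references by uniform-tightness truncations — and in making the separating selection Borel measurable is exactly the content that makes this lemma a genuine extension rather than a routine restatement.
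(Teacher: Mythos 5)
Your first two steps coincide with the paper's proof: uniform tightness of $\{\mu_i\}$ via (the easy direction of) Theorem \ref{theorem_prokhorov_tight_relativelyCompact_generalized}, the uniform bound $|\gamma_i|\le R$ from (ii)--(iii) giving uniform tightness and bounded variation of $\{\eta_i\}$, componentwise extraction of a weak$^*$ limit $\eta$, the domination $|\eta|\le CR\,\mu$ passed to the limit (the paper uses common continuity sets where you use closed sets and outer regularity -- both fine), and Radon--Nikodym. Where you genuinely diverge is the last step. The paper does \emph{not} argue by contradiction with a measurable selection of separating hyperplanes; instead it fixes a single direction $q\in\R^K$, forms the support function $\sigma_q(\omega)=\max\{q\cdot d: d\in D(\omega)+\tfrac1j\B\}$, which is upper semicontinuous by the Maximum Theorem, majorizes it by a decreasing sequence of bounded continuous functions $\psi_q^\ell$, passes the inequality $q\cdot\int_B d\eta_i\le\int_B\psi_q^\ell\,d\mu_i$ to the limit over continuity sets, and concludes $q\cdot\gamma(\omega)\le\sigma_q(\omega)$ $\mu$-a.e.\ for all $q\in\mathbb Q^K$, hence all $q$, whence $\gamma(\omega)\in D(\omega)+\tfrac1j\B$ by Hahn--Banach and $\gamma(\omega)\in D(\omega)$ on letting $j\to\infty$. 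The advantage of that route is that it sidesteps every measurability issue about the separating direction: the direction $q$ is a fixed constant and one only quantifies over a countable dense set of them. Your contradiction argument can be made to work, but you are carrying three technical debts that the paper's argument simply does not incur: (1) the separating vector $a(\omega)$ must be chosen Borel measurably, which requires knowing that $\omega\leadsto D(\omega)$ is a measurable multifunction (it is, being upper semicontinuous with compact values since $D$ is closed and bounded fiberwise, but this must be said) before Castaing/Lusin apply; (2) testing $\eta_i\stackrel{*}\rightharpoonup\eta$ against $a(\cdot)\mathbf 1_Q$ is not legitimate since $\mathbf 1_Q$ is not continuous -- you must sandwich with continuous functions or work with $\mu$-continuity sets, and then the separation inequality must be shown to persist on the slightly larger open set supporting the continuous majorant, where $\gamma$ is no longer controlled by Lusin; and (3) the claim that $\gamma_i(\omega)$ lies in an $\varepsilon/2$-neighbourhood of $D(\omega)$ for large $i$ and $\omega$ near $Q$ needs the uniform boundedness and compactness of $Q$ to upgrade the pointwise Kuratowski inclusion \eqref{limsup_inclusion} to a locally uniform one. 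None of these is fatal, but each is precisely the kind of detail this lemma exists to nail down; the support-function route resolves all three at once by replacing the $\omega$-dependent separating hyperplane with a fixed one and the indicator of $Q$ with an arbitrary continuity set.
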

	\noindent
(The upper limit in (\ref{limsup_inclusion}) above must be understood in the Kuratowski sense, cf. \cite{aubin2009set} or \cite{vinter2010optimal}.)
 
 	\begin{proof} Since $\Omega$ is a complete separable metric space, the sequence $\{ \mu_i\}$ turns out to be uniformly tight as result of Theorem \ref{theorem_prokhorov_tight_relativelyCompact}. We also know that $\gamma_i(\omega) \in D_i(\omega) \ \mu_i-$ a.e. and $D_i(\omega)$ is uniformly bounded for all $i$. It follows that there exists a constant $M>0$ such that
 		\begin{equation}  \label{condition_gamma_bounded} |\gamma_i(\omega)| \leq M \quad \mu_i- \textrm{ a.e. }   \end{equation}
 		For each $i$, the vector-valued measure $\eta_i = \gamma_i \mu_i$ can be expressed as $\eta_i = (\eta_{i,1},\ldots,\eta_{i,K})$. From the tightness of $\{ \mu_i \}$ and (\ref{condition_gamma_bounded}), it immediately follows that, for all $k\in \{1,\ldots,K \}$, $\{ \eta_{i,k}  \}$ is a family of uniformly tight, possibly signed measures. Therefore according to Theorem \ref{theorem_prokhorov_tight_relativelyCompact}, for each $k\in \{1,\ldots,K \}$ one can extract a subsequence  $\{ \eta_{i,k} \}$ (we do not relabel) which converges weakly$^*$ to some $\eta_k$.
 		We show that $\eta:=(\eta_1,\ldots,\eta_K)$ is absolutely continuous with respect to $\mu$. Let $\eta_{i,k}= \eta_{i,k}^+ - \eta_{i,k}^-$ and $\eta_k= \eta_k^+ - \eta_k^-$ be the Jordan decompositions of $\eta_{i,k}$ and $\eta_k$, where $\eta_k^+$ and $\eta_k^-$ are respectively the weak$^*$ limits of $\eta_{i,k}^+$ and $\eta_{i,k}^-$. Let $B_{\eta,\mu}$ be the common family of continuity sets (in the sense of (\ref{(b)_portmanteau}) of Theorem \ref{portmanteauTheorem}) for the measures $\eta_1^+, \ldots, \eta_K^+$, $\eta_1^-,\ldots,\eta_K^-$ and $\mu$. Take any Borel set $B$ in $B_{\eta,\mu}$, we have
 		\begin{align*}
 		\left| \int_B d \eta   \right|  = \lim\limits_{i} \left| \int_B d \eta_i \right| = \lim\limits_{i} \left| \int_B \gamma_i(\omega) d \mu_i (\omega)  \right|  \leq M \lim\limits_{i} \int_B d \mu_i (\omega) = M \int_B d \mu (\omega) \ .
 		\end{align*}
 		But since $B_{\eta,\mu}$ generates all the Borel sets of $\Omega$ (cf. \cite[Chapter 7, Appendix]{khalil2017optimality}),
 		 it follows that $\eta$ is absolutely continuous with respect to $\mu$. Therefore, by the Radon-Nikodym Theorem, there exists a $\R^K$-valued, Borel measurable and $\mu$-integrable function $\gamma$ on $\Omega$ such that for any Borel subset $B$ of $\Omega$ we have
 		\[ \eta(B) = \int_B d \eta(\omega) = \int_B \gamma(\omega) d \mu (\omega) \ ; \]
 		equivalently,
 		\[ d \eta (\omega) = \gamma(\omega)  d \mu(\omega) \ .  \]
 		It remains to show that $\gamma(\omega) \in D(\omega) \; \mu-$a.e. $\omega \in \Omega.$ For all $j \in \N$ fixed, following the approach suggested in \cite[Proposition 9.2.1]{vinter2010optimal}, we define $D^j(\omega) := D(\omega)+\frac{1}{j}\B \subset \R^K$. We fix $q \in \R^K$. Since $D(\omega)$ is uniformly bounded and $D$ is closed, the multifunction $D^j(.)$ is upper semicontinuous. Then, for $\bar R >0$ large enough, the marginal function defined by 	
 		\[  \sigma_q(\omega) := \begin{cases} \max \{ q \cdot d \ : \ d \in D^j(\omega) \} \quad & \text{if } D^j(\omega) \neq \emptyset \\ \bar R & \text{otherwise}
 		\end{cases}  \] turns out to be upper semicontinuous and bounded on $\Omega$, owing to the Maximum Theorem (cf. \cite[Theorem 1.4.16]{aubin2009set}). From standard results on semicontinuous maps (cf. \cite[A6.6]{ash2014measure}), there exists a sequence of bounded continuous functions $\{ \psi_q^\ell: \Omega \rightarrow \R   \ , \ \ell=1,2,\ldots\}$ such that:
 		 \begin{equation}\label{lowerSemicontinuous_continuous} \lim\limits_{\ell \rightarrow \infty} \psi_q^\ell(\omega) = \sigma_q(\omega)  \quad \text{and} \quad  \sigma_q(\omega) \leq \psi_q^\ell (\omega) \quad \textrm{for all } \ell =1,2, \ldots.  \end{equation}
 		Recalling that the sets $D(\omega)$ and $ D_i(\omega)$ for $ i=1,2,\ldots,$ are uniformly bounded, and owing to (\ref{limsup_inclusion}), we have that, for all $j \in \N$, there exists $i_j$ such that for all $i \ge i_j$, $ D_i(\omega) \subset D^j(\omega) \ . $ Then for $q \in \R^K$ and for any Borel subset $B$ of $\Omega$, for all $i \ge i_j$, we have
 		\begin{align}\label{inequality_eta_i_maximum} \nonumber
 		q \cdot \int_B d \eta_i (\omega) & =  q \cdot \int _B \gamma_i (\omega) \ d \mu_i (\omega)  \nonumber = q \cdot \int_{B \cap \text{dom }D^j(.)} \gamma_i (\omega) \ d\mu_i(\omega)  \\& \leq  \int _B \sigma_q (\omega)\ d \mu_i (\omega) \leq  \int _B \psi_q^\ell (\omega)\ d \mu_i (\omega) \ .
 		\end{align}
 		The last inequality is a consequence of (\ref{lowerSemicontinuous_continuous}). Before passing to the limit, we observe that
 		\begin{equation} \label{support_property} \text{supp}(\eta) \subset \text{dom }D^j(.) \ .   \end{equation}
 		Indeed, take any open set $E \subset \Omega \setminus \text{dom }D^j(.)$. Since $\text{supp}(\eta_i) \subset \text{dom }D^j(.)$ for $i$ sufficiently large, and for all $j$, from (\ref{(d)_portmanteau}) of Theorem \ref{portmanteauTheorem}, we have 
 		\[ 0 \le \int_E d \eta_k^+ (\omega) \le \liminf\limits_{i} \int_E d \eta_{i,k}^+ (\omega) \le 0  \ . \]
 		We deduce that $\eta_k^+(E)=0$ for all $k= 1,\ldots,K $.
 		Following the same reasoning, one can conclude that $\eta_k^-(E)=0$ for all $k\in 1,\ldots,K$. Hence, $\eta(E)=0$ for all open subsets $E \subset \Omega \setminus \text{dom }D^j(.)$ and $\text{supp}(\eta) \subset \text{dom }D^j(.)$. The inclusion (\ref{support_property}) is therefore proved. 
 		By passing to the limit in (\ref{inequality_eta_i_maximum}) as $i \rightarrow \infty$, since $\psi_q^\ell(.)$ is bounded continuous on $\Omega$, we obtain for any Borel set $B \in B_{\eta,\mu}$
 	\[  q \cdot \int_B d \eta (\omega) \leq \int_B \psi^\ell_q(\omega)\ d \mu (\omega) \ . \]
 	As $\int_B d \eta(\omega) = \int_B \gamma(\omega) \ d \mu (\omega)$, for any $B \in B_{\eta,\mu}$, we have
 	\begin{equation} \label{inequality_psi_function}  q \cdot \int_B \gamma(\omega) \ d \mu (\omega) \leq \int_B \psi^\ell_q(\omega)\ d \mu (\omega) \ . \end{equation}
 	Recalling that $B_{\eta,\mu}$ generates the Borel $\sigma-$algebra $\mathcal B_\Omega$, we deduce that (\ref{inequality_psi_function}) is actually valid for all Borel subsets of $\Omega$. As a consequence,
 	$q \cdot \gamma(\omega) \leq  \psi^\ell_q(\omega) \quad \mu-\textrm{a.e. }, $ and letting $\ell \rightarrow \infty$, we obtain
 	\begin{equation} \label{inequality_gamma_and_continuousFunction} q \cdot \gamma(\omega) \leq \sigma_q(\omega) \quad \mu-\textrm{a.e.}  \end{equation}
 	Inequality (\ref{inequality_gamma_and_continuousFunction}) holds for all $q\in \R^K$ with $|q|=1$. (Indeed, from the continuity of the map
 		$ q \mapsto \max\{  q \cdot d : d \in D^j(\omega) \}$, it is enough to establish inequality (\ref{inequality_gamma_and_continuousFunction}) for $q \in \mathbb{Q}^K$, and subsequently use the density of $\mathbb Q^K$ in $\R^K$.)	

Since $D^j(\omega)$ is a closed and convex set, for each $\omega \in \text{dom } D(.)$, invoking the Hahn-Banach separation theorem, we obtain that
	\[ \gamma(\omega) \in D^j(\omega)   \quad \mu-\textrm{a.e.}\] Taking the limit as $j \rightarrow \infty$, we deduce that $\gamma(\omega) \in \bigcap\limits_{j \in \N} D^j(\omega) = D(\omega) \quad \mu-$a.e.  $\omega \in \Omega$ which concludes the proof.

\end{proof}

\section{Proofs of Theorem \ref{theorem_nco_general_case_probability_space}, Theorem \ref{atomic} and Theorem \ref{proposition: smooth data nco}} \label{section_proof_nco_general_case_probability_space}

We first employ a standard hypotheses reduction argument establishing that we can, without loss of generality, replace assumptions (\ref{A2_nco_general_case})-(\ref{A5_nco_general_case}) by the stronger conditions in which $\delta=+\infty$ (i.e. the conditions are satisfied globally).
\begin{enumerate}
	\item[(A3)$'$] \customlabel{A2'_nco_general_case}{(A3)$'$} There exist a constant $c>0$ and an integrable function $k_f:[0,T] \rightarrow \R$ such that
	\[ |f(t,x,u,\omega)-f(t,x',u,\omega)| \le k_f(t)|x-x'| \quad  \text{and} \quad |f(t,x,u,\omega)| \le c    \] for all $x,x' \in \R^n, \ u\in U(t), \ \omega \in \Omega, \ \text{a.e. } t \in [0,T] \ .$

	\item[(A4)$'$] \customlabel{A4'_nco_general_case}{(A4)$'$}
	\begin{enumerate}[label= (\roman{*}) , ref= (A4)$'$(\roman{*})]
		\item \label{A4'_ii_nco_general_case} There exist positive constants $k_g\ge 1$ and $M$ such that for all $\omega \in \Omega$\\
		$|g(x,\omega)| \leq M$ and  $d_{C(\omega)}(x)\leq M$  for all $x \in \R^n$,\vskip1ex
		$ |g (x, \omega) - g(x', \omega)| \leq k_g |x-x'|$  for all $x, x' \in \R^n \ .$ \item \label{A4'_iii_nco_general_case} There exists a modulus of continuity $\theta(.)$ such that we have
		\[|g(x,\omega_1)-g(x,\omega_2)| \leq \theta(\rho_\Omega (\omega_1,\omega_2)) 
		\]
		and
		$$
		|d_{C(\omega_1)}(x)-d_{C(\omega_2)}(x)| \leq \theta(\rho_\Omega (\omega_1,\omega_2)) \ ,
		$$
		for all $\omega_1,\omega_2 \in \Omega$ and $x \in \R^n$.
	\end{enumerate}

	\item[(A5)$'$] \customlabel{A5'_nco_general_case}{(A5)$'$} There exists a modulus of continuity $\theta_f(.)$ such that for all $\omega_1, \omega_2 \in \Omega$, \[ \int_{0}^{T} \sup\limits_{u \in U(t) , \  x \in \R^n}
	|f(t,x,u,\omega_1) - f(t,x,u,\omega_2) | d t \leq \theta_f (\rho_{\Omega}(\omega_1, \omega_2)). \]
	\end{enumerate}
	
\noindent	
This is possible if we consider the ``truncation'' function $tr_{y,\delta}:\R^n \rightarrow \R^n$, defined to be
\[ tr_{y,\delta}(x) := \begin{cases} x \quad & \text{ if } |x-y| < \delta \\  y + \delta \frac{x-y}{|x-y|}  & \text{ if } |x-y| \ge \delta \ ,  \end{cases}      \] and we replace $f, \ g$ and $d$ above by their local expression $\widetilde{f}$, $\widetilde{g}$ and $\widetilde{d}$  defined as follows
\[ \widetilde f (t,x,u,\omega) := f(t,tr_{\bar x(t,\omega),\delta}(x),u,\omega), \quad 
\widetilde g (x,\omega) := g(tr_{\bar x(T,\omega),\delta}(x);\omega), \quad 
\widetilde d _{C(\omega)}(x):= d _{C(\omega)}(tr_{\bar x(t,\omega),\delta}(x))  \ .    \]

\noindent
Indeed, the problems involving the functions $(f,g, d)$ and $(\widetilde f , \widetilde g, \widetilde d )$ do coincide in a neighbourhood of the $W^{1,1}-$local minimizer $(\bar u, \{ \bar x (.,\omega)   \ | \ \omega \in \Omega \})$ for (\ref{intprob}). Therefore, $(\bar u, \{ \bar x (.,\omega) \ | \ \omega \in \Omega \})$ does remain a  $W^{1,1}-$local minimizer for the problem (\ref{intprob}) when we substitute the pair $(f,g,d)$ with $(\widetilde f, \widetilde g, \widetilde d )$.  Furthermore, the assertions of the theorem are unaffected by changing the data in this way.

\noindent
We provide two technical lemmas which will be employed in the approximation techniques used in the theorems proof. These preliminary results establish the uniform continuity of trajectories with respect to $\omega$ and the existence of a sequence of suitable finite support measures approximating the reference measure $\mu$. 
Throughout this section, $d_\mathbcal{E}(.,.)$ denotes the Ekeland metric defined on the control set $\mathcal{U}$ as
\[ d_\mathbcal{E} (u_1,u_2) := \textrm{meas } \{ t \in [0,T] \ | \ u_1(t) \neq u_2(t) \} . \]
We recall that, given a control $u(.)$, to make clearer which control is used we shall employ the alternative notation $x(.,u,\omega)$ for the feasible arc belonging to the family of trajectories $\{ x(.,\omega) \ : \ \omega \in \Omega  \}$ associated with the control $u(.)$.
\\
\begin{lemma}\label{lemma_continuity_omega_control} Let $(\Omega,\rho_\Omega)$ be a metric space. Suppose that assumptions (\ref{A1_nco_general_case})(i)-(ii), \ref{A2'_nco_general_case} and \ref{A5'_nco_general_case} are satisfied. Then,
	\begin{enumerate}[label= (\roman{*})]
		\item[(i)] \customlabel{lemma: uniformity w.r.t. control}{Lemma \ref{lemma: property uniform continuity w.r.t the control}(i)} 
		 we can find $\beta >0$ such that 
		\begin{equation}\label{lemma: property uniform continuity w.r.t the control} 
		\sup\limits_{\omega \in \Omega} \{ \| x(.,u,\omega) - x(.,u',\omega) \|_{L^\infty} \} \le \ \sup\limits_{\omega \in \Omega} \{ \| x(.,u,\omega) - x(.,u',\omega) \|_{W^{1,1}} \} \le \ \beta d_{\mathbcal E}(u,u'), 
		\end{equation}
				for all $u(.), u'(.) \in \mathcal{U}$.
		\item[(ii)]\customlabel{lemma: unformity w.r.t. parameter}{Lemma \ref{lemma: property uniform continuity w.r.t the control}(ii)} for all $\widetilde \varepsilon >0$, we can find $\widetilde r >0$, such that for any given $u(.) \in \mathcal{U}$,
		\[\| x(.,u,\omega) - x(.,u,\omega') \|_{L^\infty}  < \widetilde\varepsilon \ \text{ for all }\ \omega,\omega' \in \Omega \ \text{ such that } \rho_\Omega (\omega,\omega')   < \widetilde r  \ .\] 
	\end{enumerate}	
\end{lemma}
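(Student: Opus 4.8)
The plan is to derive both assertions from a single Gronwall-type estimate, exploiting crucially that the constant $c$, the integrable kernel $k_f$ and the modulus $\theta_f$ furnished by \ref{A2'_nco_general_case} and \ref{A5'_nco_general_case} do not depend on $\omega$ or on the choice of control. Throughout, for a control $u(.)\in\mathcal U$ and a parameter $\omega\in\Omega$ the arc $x(.,u,\omega)$ is the (unique) Carathéodory solution of $\dot x = f(t,x,u(t),\omega)$, $x(0)=x_0$; its existence and uniqueness on $[0,T]$ follow from the measurability hypotheses \ref{A1_nco_general_case}(i)-(ii) together with the global Lipschitz and boundedness conditions of \ref{A2'_nco_general_case}, after the reduction to $\delta=+\infty$ carried out above.

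First I would prove (i). Fix $\omega\in\Omega$ and $u,u'\in\mathcal U$, and write $x:=x(.,u,\omega)$, $x':=x(.,u',\omega)$, so that $x(0)=x'(0)=x_0$. Split $[0,T]$ into $S:=\{t:u(t)=u'(t)\}$ and its complement, whose measure is at most $d_{\mathbcal E}(u,u')$. On $S$ one has $|\dot x(t)-\dot x'(t)|=|f(t,x(t),u(t),\omega)-f(t,x'(t),u(t),\omega)|\le k_f(t)|x(t)-x'(t)|$ by \ref{A2'_nco_general_case}, while on its complement $|\dot x(t)-\dot x'(t)|\le 2c$ by the bound $|f|\le c$. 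Integrating,
\[ |x(t)-x'(t)| \le \int_0^t k_f(s)\,|x(s)-x'(s)|\,ds + 2c\,d_{\mathbcal E}(u,u'), \]
and Gronwall's inequality gives $\|x-x'\|_{L^\infty}\le 2c\,e^{\int_0^T k_f}\,d_{\mathbcal E}(u,u')$, uniformly in $\omega$. Feeding this back yields $\|\dot x-\dot x'\|_{L^1}\le\big(\int_0^T k_f\big)\|x-x'\|_{L^\infty}+2c\,d_{\mathbcal E}(u,u')\le \beta\,d_{\mathbcal E}(u,u')$ with $\beta:=2c\big(1+(\int_0^T k_f)e^{\int_0^T k_f}\big)$; since $x(0)-x'(0)=0$ we also have $\|x-x'\|_{W^{1,1}}=\|\dot x-\dot x'\|_{L^1}$ and $|x(t)-x'(t)|=\big|\int_0^t(\dot x-\dot x')\big|\le\|\dot x-\dot x'\|_{L^1}$, which delivers the full chain of inequalities in the statement with this $\beta$ (independent of $\omega$, $u$, $u'$).

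Next I would prove (ii). Fix $u\in\mathcal U$ and $\omega,\omega'\in\Omega$, and set $x:=x(.,u,\omega)$, $x':=x(.,u,\omega')$, again with common initial value $x_0$. Inserting $f(t,x(t),u(t),\omega')$ and using the triangle inequality,
\[ |\dot x(t)-\dot x'(t)| \le \big|f(t,x(t),u(t),\omega)-f(t,x(t),u(t),\omega')\big| + k_f(t)\,|x(t)-x'(t)| \quad\text{a.e.}, \]
the second term by \ref{A2'_nco_general_case}. Since $u(t)\in U(t)$ a.e., the first term is bounded pointwise by $\sup_{x\in\R^n,\,u\in U(t)}|f(t,x,u,\omega)-f(t,x,u,\omega')|$, whose integral over $[0,T]$ is at most $\theta_f(\rho_\Omega(\omega,\omega'))$ by \ref{A5'_nco_general_case}. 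Hence $|x(t)-x'(t)|\le\theta_f(\rho_\Omega(\omega,\omega'))+\int_0^t k_f(s)|x(s)-x'(s)|\,ds$, and Gronwall's inequality gives $\|x(.,u,\omega)-x(.,u,\omega')\|_{L^\infty}\le e^{\int_0^T k_f}\,\theta_f(\rho_\Omega(\omega,\omega'))$, a bound independent of $u$. Given $\widetilde\varepsilon>0$ it then suffices to choose $\widetilde r>0$ so small that $e^{\int_0^T k_f}\theta_f(s)<\widetilde\varepsilon$ for all $0\le s<\widetilde r$, which is possible since $\theta_f(s)\downarrow 0$ as $s\downarrow 0$.

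I do not expect a genuine obstacle: the argument is the standard combination of Carathéodory existence for the state equation with Gronwall's lemma, and the only point needing attention is uniformity — which is automatic because none of the data bounds ($c$, $k_f$, $\theta_f$) depends on $\omega$ or on the control. The sole technical caveats are that the integrand $t\mapsto\sup_{x,u}|f(t,x,u,\omega)-f(t,x,u,\omega')|$ is (implicitly in \ref{A5'_nco_general_case}) assumed measurable, and that, to associate a unique $W^{1,1}$ solution $x(.,u,\omega)$ with each admissible control $u$, one may invoke Filippov's selection lemma together with \ref{A1_nco_general_case}(i)-(ii).
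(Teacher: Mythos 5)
Your argument is correct and follows essentially the same route as the paper: part (ii) is verbatim the paper's triangle-inequality-plus-Gronwall estimate, and for part (i) you simply unwind by hand (splitting $[0,T]$ according to where $u=u'$ and applying Gronwall) the Filippov existence/estimation theorem that the paper invokes as a black box, arriving at an admissible, if slightly larger, constant $\beta$. No gaps.
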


\begin{proof}
(i) Write
\begin{equation*} \label{r_choice} \beta \ := \ 2c \exp (\int_0^T k_f(s) ds) \ . \end{equation*} 
Fix any $ \varepsilon >0$. Take any $u(.),u'(.) \in \mathcal{U}$. 
Owing to Filippov Existence Theorem \cite[Theorem 2.4.3]{vinter2010optimal} (recall that we have the same initial datum $x_0$), for each $\omega \in \Omega$, we obtain

\begin{align*} 
	\| x(.,  u,\omega)  -  x(.,u',\omega)  \|_{L^\infty}  & \le \| x(.,  u,\omega)  -  x(.,u',\omega)  \|_{W^{1,1}}  \\
		& \le \exp \left(\int_0^T k_f(s) ds \right) \int_0^T \left| f(t, x (t,u',\omega),u'(t),\omega)  - f(t, x (t,u',\omega),u(t),\omega)   \right| \ dt \\ & \le 2c \exp \left(\int_0^T k_f(s) ds \right) d_{\mathbcal E}(u,u')   .\end{align*}
The last inequality is a consequence of the bound on the dynamic (assumption \ref{A2'_nco_general_case}).
The particular choice $\beta$ allows to conclude. 

\noindent
(ii) Fix now any $\widetilde \varepsilon >0$. Take a control $u(.)\in \mathcal U$.
Owing to assumption \ref{A5'_nco_general_case}, we choose $\widetilde r>0$ such that 
\begin{equation} \label{theta inequality}  \theta_f(r')  \le \frac{\widetilde \varepsilon}{\exp\left(\int_{0}^{T} k_f(s)ds\right)} \quad \text{ for all } 0 < r' \le \widetilde r \ .   \end{equation}
Take $\omega, \ \omega' \in \Omega$ such that $\rho_\Omega(\omega,\omega') < \widetilde r.$ 
Taking two different trajectories $x(.,u,\omega)$ and $x(., u, \omega')$ with the same initial point $x_0$ and the same control $u(.)$, for all $t\in [0,T]$ we have,
\begin{align}  \nonumber & 
|x(t,u(t),\omega)   -   x(t, u(t), \omega') | \nonumber \leq \int_0^t |f(s,x(s,u(s),\omega),u(s),\omega) - f(s,x(s,u(s),\omega'),u(s),\omega') | \ d s
\\ & \hspace{-0.3cm} \nonumber \leq \int_0^t |f(s,x(s,u(s),\omega),u(s),\omega) - f(s,x(s,u(s),\omega'),u(s),\omega) | \ d s \\ & \qquad + \int_0^t |f(s,x(s,u(s),\omega'),u(s),\omega) - f(s,x(s,u(s),\omega'),u(s),\omega') | \ d s \ .
\end{align}
Taking into account assumptions \ref{A2'_nco_general_case} and \ref{A5'_nco_general_case}, we conclude that	
\begin{align*} \label{z(t)_gronwall_estimate} \nonumber
|x(t,u(t),\omega) &  -   x(t, u(t), \omega') | \leq \int_0^t k_f(s) |x(s,u(s),\omega)-x(s,u(s),\omega')| ds + \theta_f(\rho_\Omega(\omega,\omega')) \ .\end{align*}

\noindent Applying Gronwall Lemma, for all $t\in [0,T]$, we deduce
\[   |x(t,u(t),\omega)   -   x(t, u(t), \omega')| \leq  \theta_f(\rho_\Omega(\omega,\omega')) \exp \left(\int_0^t k_f(s) \textrm{d}s \right)  \ .  \]
The particular choice of $\widetilde r$ as in (\ref{theta inequality}) and the fact that $\rho_{\Omega}(\omega, \omega') < \widetilde{r}$ allow to conclude the proof.

\end{proof}

\begin{lemma} \label{lemma_relation_sequence_dirac_measure_with_given_probability}
	Suppose that conditions (\ref{A3_nco_general_case}), (\ref{A1_nco_general_case})(i)-(ii), \ref{A2'_nco_general_case}-\ref{A5'_nco_general_case} are satisfied, and $\mu \in \mathcal{M}(\Omega)$. Then, there exist a sequence of finite subsets of $\Omega$, $\{  \Omega^\ell := \{ \omega_j^\ell \ : \ j=0,\ldots,N_\ell \}   \}_{\ell\ge 1}$ and a sequence of convex combinations of Dirac measures $\{ \mu_\ell \}_{\ell \ge 1}$, such that the following properties are satisfied. 
	\begin{enumerate}[label=(\roman*), ref=\roman{*}]
		\item \label{proof: (i) result on countable sets} $\Omega^\ell \subset \Omega^{\ell+1}$ for all integer $\ell \ge 1$, and $\widehat \Omega := \bigcup\limits_{\ell \ge 1} \Omega^\ell$ is a countable dense subset of $\text{supp}(\mu)$;
		\item \label{proof: (ii) result on countable sets} $\mu_\ell = \sum_{j=0}^{N_\ell} \alpha_j^\ell   \delta_{\omega_j^\ell}$, where $\alpha_j^\ell \in (0,1]$ and $\sum_{j=0}^{N_\ell} \alpha_j^\ell =1$, and $\mu_\ell  \stackrel{*}\rightharpoonup \mu $ ;
		\item \label{proof: (iii) result on approximation of the cost function g}for each $\varepsilon>0$, we can find $\ell_\varepsilon \in \N$ such that for all $\ell \ge \ell_\varepsilon$,
		\[ \left| \int_{\Omega} g(x(T,u,\omega);\omega)\ d \mu_\ell -  \int_{\Omega} g(x(T,u,\omega);\omega) \ d \mu \right| \le \varepsilon 
		\]
		and
		\[ \left| \int_{\Omega} d_{C(\omega)}(x(T,u,\omega)) \ d \mu_\ell -  \int_{\Omega} d_{C(\omega)}(x(T,u,\omega))  \ d \mu \right| \le \varepsilon  \]
		for all $u(.) \in \mathcal{U}$.
	\end{enumerate}
	Moreover, if the measure $\mu$ has a purely atomic component such that each atom is a singleton, then the countable set $\widehat \Omega$ can be constructed in such a manner that $\widehat \Omega$ contains all the atoms of $\mu$.
	
\end{lemma}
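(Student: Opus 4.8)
The plan is to build the approximating objects in three nested stages, exploiting the fact that $\Omega$ is complete separable and $\mu$ is therefore tight. First I would fix a countable dense subset $\{\xi_1,\xi_2,\dots\}$ of $\mathrm{supp}(\mu)$ (which exists since separability passes to the subspace $\mathrm{supp}(\mu)$), and fix an exhausting sequence of compact sets $K_1\subset K_2\subset\cdots\subset \mathrm{supp}(\mu)$ with $\mu(\Omega\setminus K_\ell)<1/\ell$, available by tightness. For each $\ell$ I cover $K_\ell$ by finitely many balls of radius $1/\ell$ centred at points of the dense set, pick those centres (together with the first $\ell$ points $\xi_1,\dots,\xi_\ell$, to force density of the union) as $\Omega^\ell=\{\omega_0^\ell,\dots,\omega_{N_\ell}^\ell\}$, and take $\mu_\ell=\sum_j\alpha_j^\ell\delta_{\omega_j^\ell}$ where $\alpha_j^\ell$ is (a suitably normalised version of) the $\mu$-measure of the corresponding Voronoi-type cell $A_j^\ell$ inside $K_\ell$; by adding a little mass so that every $\alpha_j^\ell>0$ and renormalising to sum to $1$ one gets property \ref{proof: (ii) result on countable sets}. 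To also guarantee the nesting $\Omega^\ell\subset\Omega^{\ell+1}$, at each step I simply adjoin the previous finite set, which does not spoil any estimate. Density of $\widehat\Omega=\bigcup_\ell\Omega^\ell$ in $\mathrm{supp}(\mu)$ follows because $\{\xi_i\}\subset\widehat\Omega$. Weak$^*$ convergence $\mu_\ell\stackrel{*}\rightharpoonup\mu$ is checked against a bounded uniformly continuous test function $h$ (which suffices by Theorem \ref{portmanteauTheorem}\ref{(a)_portmanteau}): on each cell $A_j^\ell\subset K_\ell$ of diameter $\le 2/\ell$ the value $h(\omega_j^\ell)$ differs from $h$ on $A_j^\ell$ by at most the modulus of $h$ at $2/\ell$, and the mass outside $K_\ell$ contributes at most $\|h\|_\infty/\ell$; both terms vanish as $\ell\to\infty$.

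For property \ref{proof: (iii) result on approximation of the cost function g} I would use the regularity hypotheses. The map $\omega\mapsto g(x(T,u,\omega);\omega)$ is, uniformly in $u\in\mathcal U$, a function with a common modulus of continuity: by the triangle inequality
\[
|g(x(T,u,\omega);\omega)-g(x(T,u,\omega');\omega')|\le k_g\,|x(T,u,\omega)-x(T,u,\omega')|+\theta(\rho_\Omega(\omega,\omega')),
\]
and by Lemma \ref{lemma: unformity w.r.t. parameter} the first term is small, uniformly in $u$, when $\rho_\Omega(\omega,\omega')$ is small; call the resulting modulus $\widetilde\theta$. The same argument with $d_{C(\cdot)}$ in place of $g$, using the second inequality in \ref{A4'_iii_nco_general_case} and again Lemma \ref{lemma: unformity w.r.t. parameter}, gives an analogous modulus for $\omega\mapsto d_{C(\omega)}(x(T,u,\omega))$. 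Writing $\phi_u(\omega)$ for either of these two functions, which are uniformly bounded by $M$ and have the common modulus $\widetilde\theta$, I split $\int_\Omega\phi_u\,d\mu_\ell-\int_\Omega\phi_u\,d\mu$ into the contribution of $\Omega\setminus K_\ell$ (bounded by $M/\ell+M\cdot 0$, since $\mu_\ell$ is supported in $K_\ell$ up to the renormalisation error, which is $O(1/\ell)$) and, on each cell $A_j^\ell$, the difference $|\phi_u(\omega_j^\ell)-\phi_u(\omega)|\le\widetilde\theta(2/\ell)$ integrated against $\mu$, plus the renormalisation discrepancy between $\alpha_j^\ell$ and $\mu(A_j^\ell)$. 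All bounds are independent of $u$, so for $\ell$ large (depending only on $\varepsilon$ through $\widetilde\theta$ and $M$) the whole expression is $\le\varepsilon$ for every $u\in\mathcal U$, which is exactly \ref{proof: (iii) result on approximation of the cost function g}.

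For the final assertion, suppose $\mu$ has a purely atomic component with singleton atoms $\{a_1,a_2,\dots\}$ (an at most countable set, since the atoms have positive mass summing to at most $1$). I simply enlarge the construction by inserting $a_k$ into $\Omega^\ell$ for all $\ell\ge k$; since each $a_k\in\mathrm{supp}(\mu)$ this does not disturb density, and assigning to the singleton cell $\{a_k\}$ the mass $\mu(\{a_k\})$ only improves the cell-diameter estimate (the cell is a point), so properties \ref{proof: (i) result on countable sets}--\ref{proof: (iii) result on approximation of the cost function g} are preserved and now $\widehat\Omega\supset\{a_k\}$, i.e. $\widehat\Omega$ contains all atoms of $\mu$. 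The main obstacle is the bookkeeping that makes every estimate in \ref{proof: (iii) result on approximation of the cost function g} genuinely \emph{uniform in the control} $u\in\mathcal U$: this is where Lemma \ref{lemma: unformity w.r.t. parameter} is essential, since without the control-independent modulus for $\omega\mapsto x(T,u,\omega)$ one could not produce a single $\ell_\varepsilon$ working simultaneously for all feasible $u$; the rest is a routine tightness-plus-partition argument.
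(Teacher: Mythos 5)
Your proposal is correct and follows essentially the same route as the paper's proof: tightness of $\mu$ on the complete separable space $\Omega$, exhaustion by compacts $K_\ell$, a finite partition of $\mathrm{supp}(\mu)$ into cells of diameter $O(1/\ell)$ plus a small-mass remainder, Dirac masses at cell representatives weighted by the cell $\mu$-measures, weak$^*$ convergence tested against bounded uniformly continuous functions, and the control-uniform modulus for $\omega\mapsto g(x(T,u,\omega);\omega)$ and $\omega\mapsto d_{C(\omega)}(x(T,u,\omega))$ obtained from (\ref{A4'_nco_general_case}) together with Lemma \ref{lemma_continuity_omega_control}(ii). The only differences are cosmetic (explicit Voronoi-type cells and the insertion of a fixed dense sequence to force density, versus the paper's abstract iterative partition), and your identification of Lemma \ref{lemma_continuity_omega_control}(ii) as the key to uniformity in $u$ matches the paper exactly.
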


\begin{proof}
	(i). Since $\Omega$ is a complete separable metric space, the measure $\mu$ is tight. As a consequence, for all integer $\ell \ge 1$, there exists a compact set $K_\ell \subset \Omega$ such that
		$  \mu(\Omega \setminus K_\ell)  < \frac{1}{\ell}.   $
	Write $\Omega_0^\ell := (\Omega \setminus K_\ell) \cap \text{supp}(\mu)$. Therefore, employing an iterative argument, a suitable choice of the compact set $K_\ell$ allows us to obtain, for each $\ell \ge 1$, a family of disjoint Borel subsets $\{\Omega_j^\ell\}_{j=0,\ldots,N_\ell}$, for some $N_\ell \in \N$, such that the following conditions are satisfied:
	\begin{enumerate}[label=(\alph{*}), ref=\alph*]
		\item $ \text{supp}(\mu) = \bigcup\limits_{j=0}^{N_\ell} \Omega_j^\ell $;
		\item for each $j\in \{ 1,\ldots,N_\ell  \}$, ${\Omega}_j^\ell \subset K_\ell$, and diam$(\Omega_j^\ell) \le \frac{1}{\ell}$. 
		(Recall that diam$(\Omega_j^\ell)= \sup\limits_{a,b \in \Omega_j^\ell} \rho_\Omega(a,b)$.)
		\item $\mu(\Omega_0^\ell)  < \frac{1}{\ell}$ \ and \ $\Omega_0^\ell \supset  \Omega_0^{\ell+1}$.
	\end{enumerate}
		We can also choose elements $\omega_j^\ell \in \Omega_j^\ell$, for all $j=0,1,\ldots,N_\ell$, in such a manner that we have $\{ \omega_j^\ell \}_{j=0,\ldots,N_\ell} \subset \{ \omega_j^{\ell+1} \}_{j=0,\ldots,N_{\ell+1}}$. If $\text{supp}(\mu)$ is compact, then we can always assume that $\Omega_0^\ell = \emptyset$ for all integer $\ell \ge 1$. In this case, we can relabel the elements chosen in the Borel sets $\Omega_j^\ell$'s, taking
		\[ \omega_j^\ell   \in \Omega_{j+1}^\ell \ ,\qquad \text{for all } j=0,1,\ldots, N_\ell -1   \] and we replace $N_\ell$ with $\widetilde{N}_\ell:= N_\ell -1$. In any case, we obtain, for each $\ell \ge 1$, a finite set $\Omega^\ell := \{ \omega_j^\ell \}_j$ such that $\Omega ^ \ell \subset \Omega^{\ell +1}$.
		\noindent
		From the standard properties of complete separable metric spaces, it is easy to see that the sequence of sets $\{ \Omega^\ell \}$ can be constructed in such a way that $\widehat \Omega := \bigcup\limits_{\ell \ge 1} \Omega^\ell$ is (countable) dense in $\text{supp}(\mu)$.
		\ \\
				
\noindent	
(ii).	We assume here that $\text{supp} (\mu)$ is not compact (the compact case can be treated in a similar and easier way).  
		Consider, for each $\ell \ge 1$, the family of Borel disjoint subsets of $\Omega$, $\{ \Omega_j^\ell  \}_{j=0,\ldots,N_\ell}$ and the finite sequence of elements $\{\omega_j^\ell\}_{j=0,\ldots,N_\ell}$, with $\omega_j^\ell \in \Omega^\ell_j$, provided in the proof of (i). We define the measure $\mu_\ell$
		\[ \mu_\ell:= \sum_{j=0}^{N_\ell} \mu(\Omega^\ell_j) \delta_{\omega^\ell_j}  \ . \]
		Owing to Theorem \ref{portmanteauTheorem}, we can check the weak$^*$ convergence of the sequence $\{ \mu_\ell \}$ on the set of bounded real valued uniformly continuous functions on $(\Omega,\rho_\Omega)$ (instead of the set of bounded continuous functions). Take any bounded uniformly continuous function $h:\Omega\to \R$. Write $M:=\sup\limits_{\omega\in \Omega} |h(\omega)|$. 
		Fix any $\varepsilon > 0$. Then, there exists $r_\varepsilon>0$ such that
		\begin{equation} \label{inequality on h} |h(\omega_1) - h(\omega_2)| \leq \frac{\varepsilon}{6} \quad \text{ for all } \omega_1,\omega_2 \in \Omega \quad \textrm{with } \rho_\Omega(\omega_1,\omega_2) \leq r_\varepsilon \ . \end{equation}
		Let $\ell_\varepsilon \in \N$ such that $\frac{1}{\ell_\varepsilon} \le \min \{r_\varepsilon; \frac{\varepsilon}{4M}\}$. Then for all $\ell \ge \ell_\varepsilon$, we have
		\begin{align} \label{equation: integral of h proof of Lemma}\int_{\Omega} h \ d \mu_\ell - \int_{\Omega} h \ d\mu & = \sum_{j=0}^{N_\ell} \mu (\Omega^\ell_j) h(\omega^\ell_j) - \sum_{j=0}^{N_\ell} \int_{\Omega^\ell_j} h(\omega) \ d \mu(\omega) = \sum_{j=0}^{N_\ell} \int_{\Omega^\ell_j} h(\omega^\ell_j) - h(\omega)\ d\mu(\omega) \ . \end{align}
		For each $j\in \{1,\ldots,N_\ell\}$, we define
		\[\beta_j^\ell := \inf_{\omega \in \Omega_j^\ell} h(\omega) \qquad \text{and} \qquad \gamma_j^\ell := \sup_{\omega \in \Omega_j^\ell} h(\omega)   \ . \]
		Therefore, we can find $y_j^\ell, \ z_j^\ell \in \Omega_j^\ell$ such that
		\begin{align*}
		& h(y_j^\ell) \le \beta_j^\ell + \frac{\varepsilon}{6} \qquad \text{and} \qquad h(z_j^\ell) \ge \gamma_j^\ell - \frac{\varepsilon}{6} \ .
		\end{align*}
		Then for all $\ell \ge \ell_\varepsilon$, using also (\ref{inequality on h}) and the fact that $\text{diam}(\Omega_j^\ell) \leq \frac{1}{\ell}$, it follows that
		\begin{align} \label{beta_gamma_difference1}  \gamma_j^\ell - \beta_j^\ell  \le h(z_j^\ell) - h(y_j^\ell) + \frac{2}{6} \varepsilon \leq \frac{\varepsilon}{2}, \quad \mbox{for all} \;\; j = 1,\ldots,N_\ell \ .      \end{align}
		As a consequence, for all $\ell \ge \ell_\varepsilon$, from (\ref{equation: integral of h proof of Lemma}) we deduce that
		\begin{align*} \left| \int_{\Omega} h d \mu_\ell - \int_{\Omega} h d \mu \right| & \le  \sum_{j=0}^{N_\ell}  \int_{\Omega^\ell_j} \left|h(\omega^\ell_j) - h(\omega)\right| d\mu(\omega) \\ & \le  \sum_{j=1}^{N_\ell}  \int_{\Omega^\ell_j} \quad \left( \sup\limits_{\omega' \in \Omega_j^\ell}  h(\omega') - \inf\limits_{\omega'' \in \Omega_j^\ell} h(\omega'') \right) d\mu(\omega)
			+  \int_{\Omega^\ell_0} \left|h(\omega^\ell_0) - h(\omega)\right| d\mu(\omega)\ . \end{align*}
		Then, from inequality (\ref{beta_gamma_difference1}) and the choice of $\ell_\varepsilon$, 
		for all $\ell \ge \ell_\varepsilon$, we obtain
		\begin{align*} \left| \int_{\Omega} h d \mu_\ell - \int_{\Omega} h d \mu \right| & \le  \sum_{j=1}^{N_\ell} \int_{\Omega^\ell_j} (\gamma_j^\ell  - \beta_j^\ell) \textrm{d}\mu + 2 M \mu(\Omega^\ell_0) 
		\le \frac{\varepsilon}{2} + \frac{\varepsilon}{2}  \le \varepsilon\ . \end{align*}
		Setting $\alpha_j^\ell:= \mu(\Omega^\ell_j) >0$, for $j=0,\ldots,N_\ell$, we conclude the proof of (\ref{proof: (ii) result on countable sets}).
		
\ \\		
\noindent	
(iii). Fix any $\varepsilon > 0$. Choose $r_0 >0$ such that
		\[\theta(r) \leq \frac{\varepsilon}{4} \qquad \text{for all} \quad 0<r \le r_0 \ . \]
		Take any $\omega_1, \ \omega_2 \in \Omega$ such that $\rho_\Omega (\omega_1,\omega_2) <  r_0$. Then, from assumption \ref{A4'_iii_nco_general_case}
		\[  |g(x,\omega_1)-g(x,\omega_2)| < \frac{\varepsilon}{4}\quad 
		\text{and} \quad |d_{C(\omega_1)}(x)-d_{C(\omega_2)}(x)| < \frac{\varepsilon}{4}  \qquad \text{for all } x \in \R^n \ . \]
		Take any $u(.) \in \mathcal U$. From Lemma \ref{lemma_continuity_omega_control}(ii), there exists $\widetilde r>0$ such that for all $\omega_1,\omega_2 \in \Omega$ verifying $\rho_\Omega (\omega_1,\omega_2)< \tilde r$, we have
		\[  |x(t,u,\omega_1) - x(t,u,\omega_2)|  \leq \frac{\varepsilon}{4k_g} \quad \text{for all } t\in [0,T] 
		\ . \]
		Write $ r_\varepsilon := \min \{\widetilde r, r_0\}$. For all $\omega_1,\omega_2 \in \Omega$ verifying $\rho_\Omega(\omega_1,\omega_2) \le r_\varepsilon$, from assumption \ref{A4'_ii_nco_general_case}, we deduce
		\begin{align*} &  |g(x(T,u,\omega_1);\omega_1) -  g(x(T,u,\omega_2);\omega_2)| \\ & \le |g(x(T,u,\omega_1);\omega_1) -  g(x(T,u,\omega_1);\omega_2)| + |g(x(T,u,\omega_1);\omega_2) -  g(x(T,u,\omega_2);\omega_2)| \\ & \le \frac{\varepsilon}{4} + k_g |x(T,u,\omega_1) - x(T,u,\omega_2)| = \frac{\varepsilon}{2}
		\ .  \end{align*} 
		Similarly, $|d_{C(\omega_1)}(x(T,u,\omega_1)) -  d_{C(\omega_2)}(x(T,u,\omega_2))|\le \frac{\varepsilon}{2}$. 
		Therefore, for each $u(.) \in \mathcal U$, the maps $\omega \mapsto g(x(T,u,\omega);\omega)$ and $\omega \mapsto d_{C(\omega)}(x(T,u,\omega))$ are uniformly continuous, and from \ref{A4'_nco_general_case} (uniformly) bounded by the constant $M$ (observe that $M$ and $r_\varepsilon$ above do not depend on $u(.)$). Invoking the same argument employed in the proof of (\ref{proof: (ii) result on countable sets}) we conclude that, whenever we fix $\varepsilon > 0$, we can find $\ell_\varepsilon\in \N$ such that for all $\ell \ge \ell_\varepsilon$, we have
		\begin{align*} & \left| \int_{\Omega} g(x(T,u,\omega);\omega) \ d \mu_\ell(\omega) - \int_{\Omega} g(x(T,u,\omega);\omega)\ d \mu(\omega) \right| \le \varepsilon \  \end{align*}
		and
		\[ \left| \int_{\Omega} d_{C(\omega)}(x(T,u,\omega)) \ d \mu_\ell -  \int_{\Omega} d_{C(\omega)}(x(T,u,\omega))  \ d \mu \right| \le \varepsilon  \ . \]
		This confirms property (\ref{proof: (iii) result on approximation of the cost function g}).
		
	\ \\
				
\noindent	
Finally, if the measure $\mu$ has a purely atomic component such that each atom is a singleton, then at each step of the iterative argument employed in (i), the compact set $K_\ell \subset \Omega$, for all $\ell \ge 1$, is such that it contains a finite number of atoms of $\mu$ which will be included in $\Omega^\ell$.

\end{proof}
\noindent
{\bf Proof of Theorem \ref{theorem_nco_general_case_probability_space}}. The proof is build up in four parts. The first part consists in approximating the reference problem with a given probability measure by an auxiliary problem which involves measures with finite support. This is possible invoking the result on the weak$^*$ convergence established in Lemma \ref{lemma_relation_sequence_dirac_measure_with_given_probability} and the Ekeland's variational Principle. In the second part, we apply necessary optimality conditions (cf. Proposition \ref{finitenco} previously obtained) for the auxiliary problem. In the third part, we pass to the limit a first time to obtain optimality conditions on a countable dense subset of supp$(\mu)$. The last part of the proof is devoted to deriving, via a second limit-taking process, all the desired necessary conditions of the theorem statement. Since it is not restrictive to assume that supp$(\mu) = \Omega$, we shall consider this assumption throughout the proof. 
\vskip2ex\noindent
{\bf 1.} Take a $W^{1,1}-$local minimizer $(\bar u,\{ \bar x (.,\omega) \ : \ \omega \in \Omega \})$ for problem (\ref{intprob}). Then there exists $\bar \varepsilon >0$ such that
\[ \int_{\Omega} g(\bar x (T,\omega);\omega) d \mu (\omega) \le \int_{\Omega} g(x (T,\omega);\omega) d \mu (\omega)     \] for all feasible processes $(u,\{x(.,\omega) \ : \ \omega \in \Omega \})$ such that
\[  \| \bar x (.,\omega)  - x(.,\omega)  \|_{W^{1,1}} \le \bar \varepsilon \quad \text{for all} \quad \omega \in  \Omega \ (=\text{supp}(\mu)) \ . \]
Take a decreasing sequence $\epsilon_i \downarrow 0$ such that $\beta \epsilon_i\le \frac{\bar \varepsilon }{4}$ for all $i\ge 1$, where $\beta>0$ is the number provided by Lemma \ref{lemma_continuity_omega_control}.
For each $i$, we define the functional $J_i : ({\mathbcal U},d_{\mathbcal E}) \rightarrow \R$ as follows:
\begin{align*} & J_i(u)  := \bigg[ \int_{\Omega} \bigg( g( x (T,u,\omega);\omega) - \int_{\Omega} g(\bar x (T,\omega);\omega) \ d \mu (\omega) + \epsilon_i^2  \bigg) \ d\mu(\omega)  \bigg]  \vee \int_{\Omega} d_{C(\omega)}(x(T,\omega)) \ d\mu(\omega) .  \end{align*} 
It is clear that $J_i(u) \ge 0$, for all controls $u(.)$. Moreover, we have $J_i(u) > 0$ for all controls $u \in \mathcal{U}_{\bar \varepsilon }$, where
\begin{align*} \mathcal{U}_{\bar \varepsilon }:= \{u(.) \in \mathcal{U} \; : & \; \text{ the associated process } (u(.),  \{ x(.,\omega)  \ : \ \omega \in \Omega  \})    \\ & \text{ satisfies  } \| x(.,\omega) - \bar x (.,\omega) \|_{W^{1,1}} \le \bar \varepsilon \ \text{ for all } \omega \in \Omega \}. \end{align*}   
Otherwise, there would exist $\hat{u} \in \mathcal{U}_{\bar \varepsilon }$ such that $J_{\Omega}((\hat u(.), \{\hat x(.,\omega)\}))<J_{\Omega}((\bar u(.), \{\bar x(.,\omega)\}))$, contradicting the fact that $(\bar u, \{\bar x(.,\omega) \ : \ \omega \in \Omega   \})$ is a $W^{1,1}-$local minimizer for (P). Observe also that 
\[ J_i(\bar u) \le \inf\limits_{u \in \mathcal{U}}  J_i(u) + \epsilon_i^2 \ ,  \] 
which means that $\bar u$ is an $\epsilon_i^2-$minimizer for $J_i$ on ${\mathbcal U}$. Then, since $J_i$ is a continuous function on the complete metric space $({\mathbcal U},d_{\mathbcal E})$ (it suffices to use here the Lipschitz continuity of $g(.,\omega)$ and $d_{C(\omega)}(.)$ and Lemma \ref{lemma_continuity_omega_control}(i)), we deduce from Ekeland's Theorem (cf. \cite[Theorem 3.3.1]{vinter2010optimal}) that, for each $i\ge 1$, there exists $v_i \in {\mathbcal U}$ such that
	\begin{equation} \label{estimation_controls_ekeland_metric} d_{\mathbcal E}(v_i,\bar u) \le \epsilon_i \quad \text{and}  \end{equation}
	\begin{equation}  J_i(v_i)+\epsilon_i d_{\mathbcal E}(v_i,v_i) = \min\limits_{u \in  {\mathbcal U}} \{  J_i(u)+\epsilon_id_{\mathbcal E}(u,v_i)   \} \ .  \end{equation}
Consider the sequence of convex combinations of Dirac measures $\{  \mu_\ell \}$ provided by Lemma \ref{lemma_relation_sequence_dirac_measure_with_given_probability}. Recall, in particular, that $\mu_\ell \stackrel{*}\rightharpoonup \mu$ and 
\[ \mu_{\ell} = \sum_{j=0}^{N_\ell} \alpha_j^\ell \delta_{\omega_j^\ell}  \] where $\alpha_j^\ell \in (0,1]$, for all $j=0,\ldots,N_\ell$ and $\sum_{j=0}^{N_\ell} \alpha_j^\ell=1$. 
	We can find a decreasing sequence $\rho_i \downarrow 0$, with $\beta \rho_i\le \frac{\bar \varepsilon }{4}$ for all $i\ge 1$, and an increasing sequence $\{\ell_i \in \N\}_{i\ge 1}$ such that, setting
\begin{align*} & \widetilde{J}_i(u)  := \bigg[ \int_{\Omega^i} \bigg( g( x (T,u,\omega);\omega) - \int_{\Omega} g(\bar x (T,\bar u, \omega);\omega) \ d \mu (\omega) + \epsilon_i^2   \bigg) \ d\mu_i(\omega)  \bigg]  \vee \int_{\Omega^i} d_{C(\omega)}(x(T,u,\omega)) \ d\mu_i(\omega)  \end{align*}
(we write $\Omega^i := \Omega^{N_{\ell_i}} \subset \widehat{\Omega} \subset \Omega$, $\mu_i := \mu_{\ell_{i}}$ for the corresponding convex combination of $(N_i+1 = N_{\ell_i}+1)$ Dirac measures which approximate $\mu$, and $\omega_j^i := \omega_j^{\ell_i}$, $j=0,1,\ldots,N_i$, so that $\Omega^i = \{ \omega_j^i  \}_{j=0}^{N_i}$), we have  
\[  \big|\widetilde{J}_i(u)  -  {J}_i(u) \big| \le \frac{\rho_i^2}{2} \qquad \text{for all } u \in {\mathbcal U} \]
and
\begin{equation} \label{J1}  
\widetilde{J}_i(u) \ge {J}_i(u) - \frac{\rho_i^2}{2}>0 \qquad \text{for all } u \in {\mathbcal U}_{\bar \varepsilon }. 
 \end{equation}
Therefore, $v_i$ is a $\rho_i^2-$minimizer on ${\mathbcal U}$ for
\[  u \to \widetilde{J}_i(u)  + \epsilon_i d_{\mathcal{E}}(u,v_i) \ .  \]
Invoking Ekeland's theorem one more time, we deduce that there exists $u_i \in {\mathbcal U}$ which minimizes
\begin{equation} \label{minimizing_property_ekeland} u \to \widetilde{J}_i(u)  + \epsilon_i d_{\mathcal{E}}(u,v_i) + \rho_i  d_{\mathcal{E}}(u_i,u)  \quad \text{on } {\mathbcal U} \end{equation}
such that $d_{\mathcal{E}}(u_i,v_i) \leq \rho_i$. As a consequence we obtain
\begin{equation}\label{rhoi}
d_{\mathcal{E}}(u_i,\bar u) \le \epsilon_i + \rho_i=:\rho_i' \ .
\end{equation}
Write $(u_i,\{ x_i(., \omega) \ : \ \omega \in \Omega \})$ the process associated with the control $u_i$. 
Therefore, from Lemma \ref{lemma_continuity_omega_control} (i) we have that
\begin{equation} \label{xi}
\sup\limits_{\omega \in \Omega} \{ \| x_i(.,\omega) - \bar x(.,\omega) \|_{L^\infty} \} \le \ \sup\limits_{\omega \in \Omega} \{ \| x_i(.,\omega) - \bar x(.,\omega) \|_{W^{1,1}} \} \le \ \beta \rho_i' \ (\le \bar \varepsilon / 2) \ .
\end{equation}
Bearing in mind (\ref{J1}) it immediately follows that $\widetilde{J}_i(u_i)>0$.
	
\noindent	
Now we introduce two $\mathcal{L} \times \mathcal{B}^m-$ measurable functions
\[ m_i(t,u) := \begin{cases} 0 \quad \text{if } u = v_i(t) \\ 1 \quad \text{otherwise}, \end{cases} \qquad \text{and} \qquad m_i'(t,u) := \begin{cases} 0 \quad \text{if } u = u_i(t) \\ 1 \quad \text{otherwise}. \end{cases} \]
Therefore we can write:
\[ d_{\mathbcal E}(u,v_i) = \int_{0}^{T}m_i(t,u(t)) \ dt \qquad \text{and} \qquad    d_{\mathbcal E}(u,u_i) = \int_{0}^{T}m_i'(t,u(t)) \ dt \ .  \]
The minimizing property (\ref{minimizing_property_ekeland}) can be expressed in terms of the following auxiliary optimal control problem
\leqnomode 
\begin{equation*} \begin{cases} \label{auxiliaryprob}
\begin{aligned}
& {\text{minimize}}
& & \widetilde J_{i}(u)  + \epsilon_i  \gamma(T) + \rho_i \zeta(T) \\
&&& \hspace{-1.9cm} \text{over controls } u(.) \in \mathcal{U} 
\text{ and family of } W^{1,1} \text{arcs }  \{ x(.,\omega) \} \text{ s.t. for all } \omega \in \Omega^i \\
&&& \dot{x}(t,\omega) = f(t,x(t,\omega), u(t), \omega) \quad \textrm{a.e. } t \in [0,T] \\
&&& \dot \gamma(t) = m_i(t, u(t)) \quad \textrm{a.e. } t \in [0,T] \\
&&& \dot \zeta(t) = m_i'(t, u(t)) \quad \textrm{a.e. } t \in [0,T] \\
&&& x(0,\omega)=x_0 \\
&&& \gamma(0)=0 \text{ and } \zeta(0)=0 \end{aligned} \tag{P$_i$} \end{cases}
\end{equation*}\reqnomode
whose minimizer is the family $(u_i, (\gamma_i , \zeta_i \equiv 0,\{ x_i(., \omega) \}) )$ verifying, as $i \rightarrow \infty$, $d_{\mathbcal E}(u_i,\bar u) \rightarrow 0 $
and \begin{equation} \label{trajectory_limit_auxiliary_problem}
\sup\limits_{ \omega \in \Omega} \| \bar x (., \omega) - x_i(.,\omega) \|_{W^{1,1}}  \rightarrow 0 \ .
\end{equation}
\vskip2ex\noindent
{\bf 2.} The second step of the proof consists in applying necessary optimality conditions (cf. Proposition \ref{finitenco}) to problem (\ref{auxiliaryprob}) for each $i$ sufficiently large: for all $\omega \in \Omega^i$ (that is for $\mu_i-$a.e. $\omega \in \Omega$), there exist $W^{1,1}-$arcs $ p_i(., \omega)$ (associated with the state variable $x$), $q_i(.)$ (associated with the variable $\gamma$), and $z_i(.)$ (associated with the variable $\zeta$) such that
	\begin{equation} \label{equation: nontriviality condition proof general case0} (p_i(.,\omega), q_i(.),z_i(.)) \neq (0,0,0) \ ,
		\end{equation}
		and satisfying the necessary conditions below:
		\ \\
	\noindent
	The transversality condition (owing to the Max Rule \cite[Theorem 5.5.2]{vinter2010optimal}), for suitable $\lambda_i \in [0,1]$, leads to
	\begin{align} \label{transversality_condition_auxiliary_problem} 
	-  p_i(T, \omega) \in \alpha_j^i \lambda_i  \partial_x g (x_i(T, \omega); \omega) + \alpha_j^i(1-\lambda_i) \partial d_{C(\omega)} (x_i(T, \omega)), \quad  -q_i(T) =  \epsilon_i  \quad \text{and} \quad  -z_i(T) =  \rho_i  
	\ .   \end{align}
	(Here, $\alpha^i_j:= \alpha^{\ell_i}_j$, for $j=0,1, \dots, N_i$.) 
	The adjoint system gives $  - \dot q_i(t) \equiv 0 $ and  $  - \dot z_i(t) \equiv 0 $, which implies that
	$q_i(t)\equiv - \epsilon_i \ ,$
	and $z_i(t)\equiv -  \rho_i \ .$ Moreover,
	\begin{equation} \label{adjoint_system_axiliary_problem} - \dot { p}_i (t, \omega) \in \textrm{co } \partial_x [ p_i(t,\omega) \cdot f(t, x_i(t,\omega), u_i(t), \omega)]  \quad \text{a.e. } t \in [0,T]  \ . \end{equation}
	From the maximality condition, we obtain, for a.e. $t\in [0,T]$
\begin{align*}  & \sum_{\omega \in \Omega^i} p_i(t,\omega) \cdot  f(t,x_i(t,\omega),u_i(t),\omega)   - \epsilon_i m_i(t,u_i) \\ &  = \max_{u \in U(t)} \left( \sum_{\omega \in \Omega^i}  p_i(t,\omega) \cdot f(t, x_i(t,\omega),u,\omega) -  \epsilon_i m_i(t,u) - \rho_i m_i'(t,u)
\right).  \end{align*}
	This implies that for a.e. $t \in [0,T]$ and for every $u \in U(t)$
\begin{align} \label{maximization_condition_with_integral_inequality}  \sum_{\omega \in \Omega^i} & p_i(t,\omega) \cdot [f(t, x_i(t,\omega),u,\omega) -  f(t,x_i(t,\omega),u_i(t),\omega) ] \leq  \rho_i' \ .  \end{align}

\noindent
From (\ref{rhoi})	we deduce that
	\[  u_i = \bar u(t) \qquad \text{on a set } A_{\rho_i'} \subset [0,T] \text{ such that } \text{meas}([0,T] \setminus A_{\rho'_i}) \le \rho_i' . \]
	\noindent 
	Moreover, taking note of the fact that $d_{\mathcal{E}}(u_i,\bar u) \le \rho_i'$ 
	and, owing to Lemma \ref{lemma_continuity_omega_control} (i), we can also deduce that 
	\begin{equation}  \label{convergence of arcs} x_i(t,\omega)   \in \bar x (t,\omega) + \beta\rho_i'\B 
	\quad \text{for all } \omega \in \Omega , \text{ and for all } t \in [0,T] \ .   \end{equation}
	  Therefore, for each $i$, and $\mu_i-$a.e. $\omega \in \Omega$, from the optimality conditions (\ref{equation: nontriviality condition proof general case0})-(\ref{maximization_condition_with_integral_inequality}), we have
	  \begin{enumerate}[label=(a\arabic*), ref=(a\arabic*)]
\item $ p_i(.,\omega) \neq 0$ \ ;
	  	\item $ - \dot { p}_i (t, \omega) \in \textrm{co } \partial_x [ p_i(t,\omega) \cdot f(t, x_i(t,\omega), \bar u(t), \omega)]$ \quad for all $t \in A_{\rho_i'}$ ;
	  	\item $-  p_i(T, \omega) \in \alpha_j^i \lambda_i  \partial_x g (x_i(T, \omega); \omega) + \alpha_j^i(1-\lambda_i) \partial d_{C(\omega)} (x_i(T, \omega))  	 \ ;  $
	  	\item $\sum_{\omega \in \Omega^i}  p_i(t,\omega) \cdot [f(t, x_i(t,\omega),u,\omega) -  f(t,x_i(t,\omega),\bar u(t),\omega) ] \leq  \rho_i'$ \quad for all $t \in A_{\rho_i'}$\ and for any $u \in U(t) \ .$
	  \end{enumerate}
  \noindent
  Following the idea of Proposition \ref{finitenco}, and dividing  each term of the family of the costate arcs  across by the corresponding coefficient $\alpha_j^i(>0)$ (without relabelling), we obtain that for each $i$ large enough and $\mu_i-$a.e. $\omega \in \Omega$,
  \begin{enumerate}[label=(a\arabic*)$'$, ref=(a\arabic*)$'$]
  	\item  \label{a0} $ p_i(.,\omega) \neq 0$ \ ;
  	\item \label{a1} $ - \dot { p}_i (t, \omega) \in \textrm{co } \partial_x [ p_i(t,\omega) \cdot f(t, x_i(t,\omega), \bar u(t), \omega)]$ \quad for all $t \in A_{\rho_i'}$ ;
  	\item \label{a2} $-  p_i(T, \omega) \in \lambda_i  \partial_x g (x_i(T, \omega); \omega) + (1-\lambda_i) \partial d_{C(\omega)} (x_i(T, \omega))  	 \ ;  $
  	\item \label{a3} $\int_{ \Omega}  p_i(t,\omega) \cdot [f(t, x_i(t,\omega),u,\omega) -  f(t,x_i(t,\omega),\bar u(t),\omega) ] \ d\mu_i(\omega) \leq  \rho_i'$ \quad for all $t \in A_{\rho_i'}$\ and for any $u \in U(t) \ .$
  \end{enumerate}

\vskip2ex
\noindent{\bf 3.} We derive now consequences of the limit-taking for conditions \ref{a0}-\ref{a2} of the previous step. 
Recall that from Lemma \ref{lemma_relation_sequence_dirac_measure_with_given_probability}, we have a countable dense subset $\widehat \Omega$ of $\Omega$, such that
$ \widehat \Omega \ = \  \bigcup_{i \ge 1} \Omega^i \ ,     $
where $\Omega^i = \{ \omega_j^i \ : \ j =0,\ldots,N_i   \}$ provides an increasing sequence of finite subsets of $\Omega$: $\Omega^1 \subset \ldots \subset \Omega^i \subset \Omega^{i+1} \subset \ldots$. Since $\widehat \Omega$ is a countable set, we can write it as the collection of the elements of a sequence $\{ \omega_k \}_{k \ge 1}$ such that
 \[ \widehat \Omega = \{ \omega_k  \}_{k \ge 1} . \]
 \noindent
Fix $i\in \N$. When we take $\omega_k \in \widehat{\Omega}$, two possible cases may occur: either $\omega_k \in \Omega^i$ for the fixed $i \in \N$; or $\omega_k \in \widehat \Omega \setminus \Omega^i$. In the first case, it means that there exists $j \in \{0,\ldots,N_i\}$ such that $\omega_k = \omega_j^i$ and the corresponding adjoint arc $p_i(.,\omega_j^i)$ satisfies conditions \ref{a0}-\ref{a3}. So, we can define the arc $p_i(.,\omega_k)$ as follows:
\[ p_i(.,\omega_k) := \begin{cases} p_i(.,\omega_j^i) \quad & \text{if } \omega_k \in \Omega^i \; (\text{and } \omega_j^i = \omega_k) \\ 0 \quad & \text{if }\omega_k \in \widehat \Omega \setminus \Omega^i . \end{cases}   \]
Therefore, by iterating on $i$, associated with each $\omega_k \in \widehat{\Omega}$, we can construct a sequence of families of arcs $\{ p_i(.,\omega_k): \omega_k \in \widehat \Omega \}_{i \ge 1}$.
Observe that there exists always $i_k \in \N$ such that, for all $i \ge i_k$, $p_i(.,\omega_k)$ is an adjoint arc for which \ref{a0}-\ref{a3} hold true. 
%
From \ref{a2} and (A4)$'$ it immediately follows that the sequence $\{ p_i(T,\omega_k) \}$ is uniformly bounded by $k_g+1$. On the other hand \ref{a1} and (A3)$'$ imply that $\{ \dot p_i(.,\omega_k) \}$ are uniformly integrably bounded.
Then, the hypotheses are satisfied under which the Compactness Theorem \cite[Theorem 2.5.3]{vinter2010optimal} is applicable to
\[  -\dot p_i(t,\omega_k) \in \text{co }\partial_x [p_i(t,\omega_k) \cdot f(t,x_i(t,\omega_k),\bar u(t),\omega_k) ] \quad \text{for all } t \in A_{\rho_i'} .   \]
We conclude that, along some subsequence (we do not relabel),
\begin{equation} \label{equation: convergence property 1. compactness theorem}    p_{i}(.,\omega_k) \xrightarrow[i]{} \widehat p(.,\omega_k)  \;\; \text{uniformly} \qquad \text{and} \qquad \dot p_{i}(t,\omega_k) \rightharpoonup \dot {\widehat p} (t,\omega_k) \quad \text{weakly in } L^1 \end{equation}
for some $\widehat p(.,\omega_k) \in W^{1,1}$ which satisfies (for the fixed $k$)
\[  -\dot{\widehat{p}}(t,\omega_k) \in \text{co }\partial_x [\widehat{p}(t,\omega_k) \cdot f(t,\bar x(t,\omega_k),\bar u(t),\omega_k)] \quad \text{a.e. } t\in [0,T].    \]
We can also take the subsequence in such a manner that $\{\lambda_i \}$ converges to some $\lambda \in [0,1]$.
Moreover, from the closure of the graph of the limiting subdifferential and the normal cone (seen as multifunctions), we have that
\[  -  \widehat{p}(T, \omega_k) \in \lambda \partial_x g (\bar x(T,\omega_k); \omega_k) + (1-\lambda)  \partial d_{C(\omega_k)} (\bar x(T,\omega_k)) . \]
But $\widehat \Omega = \{ \omega_k  \}_k$ is a countable set. Then, we can repeat the similar analysis for each $\omega_k \in \widehat \Omega$, taking into account the subsequence obtained for the previous element $\omega_{k-1}$. As a consequence, we have a collection of subsequences $\{ \widetilde p_i(.,\omega)  \}$ verifying the convergence properties (\ref{equation: convergence property 1. compactness theorem}) to a collection of adjoint arcs $\{ \widetilde p(.,\omega)  \}$ which satisfies, for all $\omega \in \widehat \Omega$
\begin{equation} \label{equation: adjoint system proof general case}  -\dot{\widetilde{p}}(t,\omega) \in \text{co }\partial_x [\widetilde{p}(t,\omega) \cdot f(t,\bar x(t,\omega),\bar u(t),\omega)] \quad \text{ a.e. } t\in [0,T]   \end{equation} and
\begin{equation*}  -  \widetilde{p}(T, \omega) \in \lambda \partial_x g (\bar x(T,\omega); \omega) + (1- \lambda) \partial d_{C(\omega)} (\bar x(T,\omega))   . \end{equation*}
Furthermore, since for all $i$, $\widetilde p_i(.,.)$ is $\mathcal{L} \times \mathcal{B}_{\widehat{\Omega}}$ measurable, we obtain that its limit $\widetilde p(.,.)$ is also $\mathcal{L} \times \mathcal{B}_{\widehat \Omega}$ measurable. 
The final step is represented by the extension of $\widetilde p(.,.)$ to a $\mathcal{L} \times \mathcal{B}_{\Omega}$ measurable function $p(.,.)$ on $[0,T] \times \Omega$ which satisfies conditions (\ref{equation: adjoint system proof general case}) and (\ref{equation: transversality condition proof general case}) below when restricted to  $\widehat \Omega$. 
This can be done as follows. Writing explicitly the coordinates of $\widetilde p(.,.)=(\widetilde p ^1(.,.), \dots, \widetilde p ^n(.,.))$, for each $j=1,\dots, n$, we have the decomposition into the positive and negative parts: $\widetilde p ^j=\widetilde p ^{j+}-\widetilde p ^{j-}$. Consider a sequence of simple functions $\widetilde \phi_k(.,.)$ (for $\mathcal{L} \times \mathcal{B}_{\widehat \Omega}$) which approximates from below $\widetilde p ^{j+}(.,.)$: $0\le \widetilde \phi_k \uparrow \widetilde p ^{j+}$. Let $\phi_k(.,.)$ be the simple function which provides an extension of $\widetilde \phi_k(.,.)$ to $\mathcal{L} \times \mathcal{B}_{\Omega}$. Then, define 
\begin{equation}\label{limit}
p^{j+}(t,\omega) := \begin{cases} \lim_k \phi_k(t,\omega) \quad & \text{if the limit exists and is finite} \\ 
0 \quad & \text{otherwise }. 
\end{cases}   
\end{equation} 
Then, we obtain the desired extension setting $p^j=p^{j+} - p^{j-}$ and $p(.,.)=(p^1(.,.), \dots, p^n(.,.))$.
Clearly we have the following transversality condition:
\begin{equation}\label{equation: transversality condition proof general case}  -  {p}(T, \omega) \in \lambda \partial_x g (\bar x(T,\omega); \omega) +N^1_{C(\omega)} (\bar x(T,\omega)), \quad \mbox{for all } \; \omega \in \widehat{\Omega}  . \end{equation}


\noindent
Finally, we derive a non-triviality condition for $\{p(.,\omega) \; : \; \omega \in \widehat{\Omega}\}$. This is immediate if the $\lambda=\lim \lambda_i>0$, so we continue examining the case in which $\lambda=0$. 
Choose $i_0 \in \N$ such that for all $i \ge i_0$, $(k_g +1) \lambda_i < \frac{1}{2}$.
In particular, for all $i \ge i_0$, from the Max Rule we have $1-\lambda_i>0$, and using the fact that $\widetilde{J}_i(u_i) >0$, it follows that 
$$
\int_{\Omega^i}d_{C(\omega)} (x_i(T,\omega))\ d\mu_i(\omega) = \sum_{j=0}^{N_i} \alpha_j^i d_{C(\omega_j^i)} (x_i(T,\omega_j^i)) > 0.
$$ 
Then there exists $j_i \in \{0,1, \dots, N_i\}$  and $\nu\in \R^n$ such that $|\nu|=1$ and 
 \[  -  p_i(T, \omega_{j_{i}}^i) \in  \lambda_i  \partial_x g (x_i(T, \omega_{j_{i}}^i); \omega_{j_{i}}^i) + (1-\lambda_i)\nu \ .  \]
Recalling that $k_g>0$ is the Lipschitz constant of $g(.,\omega)$, we have
\[  |p_i(T,\omega_{j_{i}}^i)|   \ge  -\lambda_i k_g +  (1-\lambda_i)  .\]
And from the choice of $i_0 \in \N$, we obtain that
\[ |p_i(T,\omega_{j_{i}}^i)| \ge \frac{1}{2} , \]
and so
\begin{equation}  \label{new non-degeneracy}  \sum_{j=1}^{N_i} \| {p}_i(.,\omega_j^i) \|_{L^\infty} \ge  \| p_i(.,\omega_{j_{i}}^i)\|_{L^\infty}   \ge |p_i(T,\omega_{j_{i}}^i)|  \ge \frac{1}{2}  .  \end{equation}
We deduce that
\begin{equation*} \sum_{\omega \in \widehat \Omega} \max_{t \in [0,T]} |p(t,\omega)| \ge \frac{1}{2}.  \end{equation*}
In any case, we obtain the  non-triviality condition 
\begin{equation} \label{non-triviality non zero} 
(\lambda, \{p(.,\omega) \; : \; \omega \in \widehat{\Omega}\}) \neq (0,0) \ .
\end{equation}


\vskip2ex
\noindent{\bf 4.} In the last part of the proof, we want to use also the information contained in the maximality condition \ref{a3} (or in its alternative version (\ref{maximization_condition_with_integral_inequality})) as $i \rightarrow \infty$. This task requires to use Castaing's Representation Theorem (cf. \cite[Theorem III.7]{castaing2006convex}, 
the Aumann's Measurable Selection Theorem (cf. \cite[Theorem III.22]{castaing2006convex}), and Lemma \ref{weakconvergence} which has a central role for the limit-taking of all the necessary conditions obtained in Step 2 at the same time. 
Write
\[ F(t,\omega) := f(t,\bar x (t,\omega),U(t),\omega)  \ .  \] Owing to assumption (\ref{A1_nco_general_case}) and the Lipschitz continuity of $f(t,.,u,\omega)$, we obtain that $(t,\omega) \leadsto F(t,\omega)$ is a $\mathcal{L} \times \mathcal{B}_\Omega$ measurable with closed values. Using the Castaing's Representation Theorem, we know that there exists a countable family of $\mathcal{L} \times \mathcal{B}_\Omega$ measurable functions $\{ f_j(t,\omega)  \}_{j \ge 0}$, such that
\[   F(t,\omega) = \overline{\bigcup\limits_{j \ge 0}\{ f_j(t,\omega)   \} }      \quad \text{for all } (t,\omega) \in E   \ ,     \]in which $E \subset [0,T] \times \Omega$ is a set of full-measure. We can also assume that $f_0(t,\omega) = f(t, \bar x (t,\omega), \bar u (t), \omega)$. For all $j \ge 1$, define the multifunction

\[ \widetilde{U}_j(t,\omega) := \begin{cases}
\bar u(t)   & \text{if } (t,\omega) \notin E \\ \{ u \in U(t) \ : \  f_j(t,\omega) = f(t, \bar x (t,\omega),  u, \omega)  \} & \text{if } (t,\omega) \in E \ .
\end{cases}             \]
The graph of $\widetilde{U}_j(.,.)$ is a $\mathcal{L} \times \mathcal{B}_\Omega \times \mathcal{B}^m$ measurable set. Indeed, we have
\begin{align*}
\text{Gr }\widetilde{U}_j(.,.) =  \{ ((t, &\omega),u)  \ : \ u \in U(t) ,   \ (t,\omega) \in E \ \text{ and } \ f(t, \bar x (t,\omega),  u, \omega) - f_j(t,\omega)   =0 
\} \\ & \bigcup \  \{ ((t,\omega),u) \ : \ (t,\omega) \notin E , \ u = \bar u(t)   \} \ ,\end{align*} which is the union of two $\mathcal{L} \times \mathcal{B}_\Omega \times \mathcal{B}^m$ measurable sets. Now invoking Aumann's Measurable Selection Theorem, we deduce that $\widetilde{U}_j(.,.)$ has a measurable selection $v_j(t,\omega) \in \widetilde{U}_j(t,\omega)$.\\
Let now $\mathcal{D}$ be a countable and dense subset of $[0,T]$. Consider the sequence of intervals $\{ [s_i , t_i]  \}_{i \ge 1}$  having extrema in $\mathcal{D} : \ \bigcup\limits_{i \ge 1} \{ s_i ,t_i \} = \mathcal{D}$. We construct now a further countable family of controls $\{ \widetilde v_{j,i} (t,\omega)  \}_{j \ge 1, \ i \ge 1}$ as follows
\begin{equation} \label{system: construction of another family of controls }   \widetilde v_{j,i} (t,\omega) : = \begin{cases}
 v_j(t,\omega)  & \quad \text{on } [s_i,t_i] \times \Omega \\ \bar u(t ) & \quad \text{on } ([0,T] \setminus [s_i,t_i]) \times \Omega \ . 
\end{cases}            \end{equation}
Writing $\{ \widetilde{u}_k(t,\omega)  \}_{k \ge 0} = \{ \widetilde{v}_{j,i}(t,\omega)  \}_{j \ge 1, i \ge 1} \cup \{ \bar{u}(.)  \}$, in such a manner that (up to a reordering) $\widetilde{u}_0(.,\omega) = \bar{u}(.)$, we obtain
\begin{equation} \label{form of set of velocities}  F(t,\omega) = \overline{\bigcup\limits_{k \ge 0}\{f(t, \bar x (t,\omega), \widetilde{u}_k(t,\omega), \omega)    \} }      \quad \text{for all } (t,   \omega) \in E    \ .     \end{equation}

\noindent
Following an effective technique proposed by Vinter \cite{vinter2005minimax}, for a fixed integer $K$, we introduce the ope\-rators $\Psi_k(.,.)$ and $\Psi^i_k(.,.)$ on $W^{1,1}([0,T], \R^n)\times \Omega$ (linear with respect to their first variable): for $k=1, \ldots, K$, we set
\[  \Psi_k(p(.),\omega) := \int_{0}^{T} p(t) \cdot [f(t, \bar x(t,\omega),\widetilde u_k(t,\omega),\omega)   -  f(t, \bar x(t,\omega),\bar u(t),\omega)]  \ dt  \ , \] and, for all integers $i\ge 1$,
\[  \Psi^i_k(p(.),\omega) := \int_{0}^{T} p(t) \cdot [f(t, x_i(t,\omega),\widetilde u_k(t,\omega),\omega)   -  f(t,  x_i(t,\omega), u_i(t),\omega)] \  dt  \ . \]
Define also the subsets $D_i$, for all $i\ge 1$, and $D$ of $\Omega \times \R^K$ as follows: 
\begin{align*} D_{i}:= \{ (\omega,\xi) & \in \Omega \times \R^K \ | \ \omega \in \Omega \; \textrm{ and }  \xi = \big( \Psi^i_k(p(.,\omega), \omega) \big) _{k=1,\ldots,K}  \text{for some }\mathcal{L}\times\mathcal{B}_\Omega \text{ measurable function } \\ & p: [0,T] \times \Omega \rightarrow \R^n \;  \textrm{ such that }  \  p(.,\omega) \in \mathcal{P}_{i}(\omega) \ \text{ for all } \omega \in \Omega^i   \}, \end{align*}
where $\{\Omega^i\}$ is the increasing sequence of (finite) subsets introduced in Step 3 (cf. Lemma \ref{lemma_relation_sequence_dirac_measure_with_given_probability}), and 
\begin{align*}
\mathcal{P}_{i}(\omega) & := \Bigg\{ \nonumber q(., \omega) \in W^{1,1} \ : \  
\ , \ - \dot q(t, \omega) \in \textrm{co } \partial_x [q(t,\omega) \cdot f(t, \bar x (t, \omega) + \epsilon'_i \B, \bar u(t), \omega)] \\ & \nonumber  \text{on a set } A_i \text{ such that } \text{meas } ([0,T] \setminus A_i) \le \rho'_i \ ,  \  \textrm{and there exists } \lambda_i \in [0,1] \text{  such that }\ \\ & (\lambda_i, \{q(.,\omega) \; : \; \omega \in \Omega^i \}) \neq (0,0)  \ \text{ and }\  - q(T, \omega)  \in \bigcup\limits_{ x \in \bar x (T, \omega) + \epsilon_i' \B} \; \; \lambda_i \partial_x g (x,\omega) + N^1_{C(\omega)}(x)  \Bigg\} \ ,
\end{align*}
in which $\epsilon_i' : = \beta\rho_i'$. 
The set $D$ is written
\begin{align*} D := \{ (\omega,\xi) & \in \Omega \times \R^K \ | \ \omega \in \Omega \; \textrm{ and }  \xi = \big( \Psi_k(p(.,\omega), \omega) \big) _{k=1,\ldots,K} \textrm{ for some } \mathcal{L}\times\mathcal{B}_\Omega \text{ measurable function } \\ & p: [0,T] \times \Omega \rightarrow \R^n \ \text{such that }  \ \ p(.,\omega) \in \text{co }\mathcal{P}(\omega) \ \text{ for all }\omega \in \widehat{\Omega} \} 
\end{align*}
where $\widehat{\Omega}$ is the countable dense subset of $\Omega$ ($=\mbox{supp} (\mu)$ in our assumptions) provided by Lemma \ref{lemma_relation_sequence_dirac_measure_with_given_probability} and

\begin{align*} \mathcal{P}(\omega) :=  & \Bigg\{ q(.,\omega) \in  W^{1,1}( [0,T], \R^n) \ : \  \text{for some } \lambda \in [0,1], \text{ we have }  
\ , \\ &  (\lambda, \{q(.,\omega) \; : \; \omega \in \widehat{\Omega}\}) \neq (0,0)  \ , \   - \dot q(t,\omega) \in \text{co }\partial_x [q(t,\omega) \cdot f(t,\bar x(t,\omega),\bar u(t),\omega)] \text{ a.e. } t \in [0,T] \ ,\\ &
-q(T,\omega) \in \lambda \partial_x g(\bar x(T,\omega);\omega) + N^1_{C(\omega)}(\bar x(T,\omega)) \Bigg\}. \end{align*}
\noindent
Now, we define the multifunctions $D_i(.)$, for $i=1,2,\ldots$, and $D(.)$ on $\Omega$, taking values in the subsets of $\R^K$ as follow:
\[ D_{i}(\omega) := \{ (\xi_1,\ldots,\xi_K) \in \R^K \ : \ (\omega,\xi) \in D_i  \}  \qquad \text{and} \qquad D(\omega) := \{ (\xi_1,\ldots,\xi_K) \in \R^K \ : \ (\omega,\xi) \in D  \} . \]
\noindent
The multifunctions $\omega \leadsto D(\omega)$ and $\omega \leadsto  D_i(\omega)$, for all $i$, are uniformly bounded. The necessary optimality conditions \ref{a0}-\ref{a2} corresponding to the auxiliary problem (\ref{auxiliaryprob}) of Step 2 guarantee that the set $D_{i}(\omega)$ is non-empty : indeed there exist $\mathcal{L}\times\mathcal{B}_\Omega$ measurable functions $p_i:[0,T] \times \Omega \rightarrow \R^n$ such that $p_i(.,\omega) \in \mathcal{P}_{i}(\omega)$ $\mu_i-$a.e. $\omega \in \Omega$ and so
\[  \big( \Psi^i_k(p_i(.,\omega), \omega) \big) _{k=1,\ldots,K} \in D_i(\omega) \quad  \mu_i- \textrm{a.e. } \omega \in \Omega \ .  \]
 Moreover, the linearity of the operator $\Psi_k$ with respect to the first variable $p$ and the convexity of the set $\text{co }\mathcal{P}(\omega)$ guarantee the convexity of the set $D(\omega)$ for each $\omega \in \text{dom }D(.)$. It follows that hypotheses (\ref{item: i of proposition weak convergence})-(\ref{item: iii of proposition weak convergence}) of Lemma \ref{weakconvergence} are satisfied. We claim that
\[ \limsup\limits_{i \rightarrow \infty} D_i \subset D \ . \]
Indeed, take any $(\omega, \xi) \in \limsup\limits_{i \rightarrow \infty} D_i$. 
From the definition of the limsup in the Kuratowski sense, there exists a subsequence $i_h \rightarrow \infty$ and $(\omega_{i_{h}},\xi_{i_h}) \in D_{i_h}$ such that
\[  \lim\limits_{i_h \rightarrow \infty}(\omega_{i_h}, \xi_{i_h}) = (\omega, \xi) \]
We shall show that $(\omega,\xi) \in D$. Since $(\omega_{i_{h}},\xi_{i_h}) \in D_{i_h}$, there exists a sequence of $\mathcal{L}\times\mathcal{B}_\Omega$ measurable functions $p_{i_h} :[0,T] \times \Omega \rightarrow \R^n$ such that $p_{i_h}(.,\omega) \in \mathcal{P}_{i_h}(\omega)$ for all $\omega \in \Omega^{i_h}$. From the analysis of Step 3, we have established the existence of a map $p$ on $[0,T]\times\Omega$ which is $\mathcal{L}\times\mathcal{B}_\Omega$ measurable, verifying conditions (\ref{equation: adjoint system proof general case}), (\ref{equation: transversality condition proof general case}), and  (\ref{non-triviality non zero}) for all $\omega \in \widehat{\Omega}$. 
Moreover, the uniform convergence of $\{p_{i_h}(.,\omega) \ : \ \omega \in \widehat{\Omega}\}$, Lemma \ref{lemma_continuity_omega_control} and assumption \ref{A2'_nco_general_case} guarantee that, for $k=1,\ldots,K$ and for all $\omega \in \widehat \Omega$,
 \begin{align*}
 \int_0^T p_{i_h}(t,\omega) \cdot \left[ f(t,{x_{i_h}}(t,\omega),\widetilde{u}_k(t,\omega),\omega) - f(t,{x_{i_h}}(t,\omega),u_i(t),\omega)  \right] \ dt  
 \end{align*} converges, as $i_h \rightarrow \infty$, to
\begin{align*}
 \int_0^T p(t,\omega) \cdot \left[ f(t,\bar{x}(t,\omega),\widetilde{u}_k(t,\omega),\omega) - f(t,\bar{x}(t,\omega),\bar{u}(t),\omega)  \right] \ dt  \ .
 \end{align*}
 Therefore, $(\omega,\xi) \in D$ and the claim is confirmed. Consequently, all required hypotheses of Lemma \ref{weakconvergence} are satisfied for $\gamma_i(\omega)=(\gamma_{i,1}(\omega),\ldots,\gamma_{i,K}(\omega))$ where for $k=1,\ldots,K$, \[ \gamma_{i,k}(\omega)= \int_{0}^{T} p_i(t,\omega) \cdot [f(t, {x_i}(t,\omega),\widetilde u_k(t,\omega),\omega)   -  f(t, {x_i}(t,\omega), u_i(t),\omega)]  \ dt \] which is $\mu_i-$measurable. Defining, for each $i$, the vector-valued measure $\eta_i := \gamma_i \mu_i$, and applying Lemma \ref{weakconvergence}, we obtain, along a subsequence (we do not relabel)
 $ \eta_i  \stackrel{*}\rightharpoonup \eta   $ where $\eta$ is a vector-valued Borel measure on $\Omega$ such that
 $ d\eta (\omega) = \gamma(\omega) \ d\mu (\omega)$, for some Borel measurable function $\gamma : \Omega \rightarrow \R^K$ satisfying
 \[    \gamma(\omega)  \in D(\omega) \quad \mu-\text{a.e. } \omega \in \Omega \ .   \]
 In addition, from the definition of the set $D$ (associated with each $K\in \N$), there exists a $\mathcal{L}\times\mathcal{B}_\Omega$ measurable function $p_K:[0,T]\times\Omega \rightarrow \R^n$ such that 
$p_K(.,\omega) \in \text{co }\mathcal{P}(\omega)$ for all $\omega \in \widehat{\Omega}$, and $\gamma (\omega) : = \big(   \Psi_k(p_K(.,\omega),\omega)    \big)_{k=1,\ldots,K}$ verifying
 \[  \int_{\Omega} \gamma_i(\omega) \ d\mu_i(\omega) \xrightarrow[i \rightarrow \infty ]{}  \int_{\Omega} \gamma(\omega) \ d\mu(\omega)   \ .  \]
 In other terms, for each $k=1,\ldots,K$
 
 \begin{align} \label{convergence_due_to_limit_taking_result}
 &  \nonumber\int_{\Omega} \int_{0}^{T} p_i(t,\omega) \cdot [f(t, {x_i}(t,\omega),\widetilde u_k(t,\omega),\omega)   -  f(t, {x_i}(t,\omega), u_i(t),\omega)] \ dt d\mu_i(\omega) \\ & \xrightarrow[i \rightarrow  \infty]{} \int_{\Omega} \int_{0}^{T} p_K(t,\omega) \cdot [f(t, {\bar x}(t,\omega),\widetilde u_k(t,\omega),\omega)   -  f(t, {\bar x}(t,\omega), \bar u(t),\omega)] \ dt d\mu(\omega) \ .
 \end{align}
 The maximality condition \ref{a3} of Step 2, after inserting $u=\widetilde u_k(t,\omega)$, gives
 \begin{equation} \label{equation: integral and integrand proof}\int_\Omega  p_i(t,\omega) \cdot [f(t, x_i(t,\omega),\widetilde u_k(t,\omega),\omega) -  f(t,x_i(t,\omega), u_i(t),\omega) ] \ d\mu_i(\omega) \leq \rho_i' \  \text{ a.e. }t \in [0,T] . \end{equation} Since in (\ref{equation: integral and integrand proof}) the integrand function is $\mathcal{L} \times \mathcal{B}_\Omega-$measurable, and the integral function is $\mathcal{L}-$measurable, making use of Fubini-Tonelli, we obtain
 \[\int_\Omega \int_{0}^T p_i(t,\omega) \cdot [f(t, x_i(t,\omega),\widetilde u_k(t,\omega),\omega) -  f(t,x_i(t,\omega), u_i(t),\omega) ] \ dt d\mu_i(\omega) \leq  \rho'_i T \ . \]
 Therefore, letting $i \to \infty$ and invoking (\ref{convergence_due_to_limit_taking_result}), we have that
 \begin{equation} \label{maxqN}   \int_{\Omega} \int_{0}^{T} p_K(t,\omega) \cdot [f(t, {\bar x}(t,\omega),\widetilde u_k(t,\omega),\omega)   -  f(t, {\bar x}(t,\omega), \bar u(t),\omega)]  \ dt d\mu(\omega)  \le 0 \ .    \end{equation}

\noindent
For each $K \in \N$, the map $\omega \rightarrow p_K(.,\omega)$ can be interpreted as a $\mathcal{B}_\Omega-$measurable element of the $\mu-$a.e. equivalence class in the Hilbert space
\[ \mathcal{H} := L^2_{\mu}(\Omega, L^2( [0,T]; \R^n)) \] endowed with the inner product
\[ \big<p,p'\big>_\mu := \int_\Omega \int_{0}^{T} p(t,\omega) \cdot p' (t,\omega) \ d t d \mu(\omega) \ .  \]
\noindent
Now consider $\widehat{\mathcal{P}}$ to be the set of $\mathcal{L}\times\mathcal{B}_{\Omega}$ measurable functions $\widehat q$ of $\mathcal{H}$ defined on $[0,T] \times \Omega$ such that $\widehat q (.,\omega) \in \text{co } \mathcal{P}(\omega)$ for all $\omega \in \widehat{\Omega}$: 
\[  \widehat{\mathcal{P}} := \{  \widehat q \in {\mathcal{H}} \ | \ \widehat q (.,\omega) \in  \text{co }\mathcal{P}(\omega) \quad \text{for all } \omega \in \widehat{\Omega} \}    \ .  \]
Note that $ \widehat{\mathcal{P}}$ is nonempty since $p_K(.,\omega) \in  \text{co }\mathcal{P}(\omega)$ for all $\omega \in \widehat{\Omega}$. Moreover, it is a straightforward task to prove that $\widehat{\mathcal{P}}$ is a closed and convex subset in $\mathcal{H}$ (owing to the convexity and the closure of the set $\text{co }\mathcal{P}(\omega)$ for all $\omega \in \widehat{\Omega}$). Therefore, $\widehat{\mathcal{P}}$ is weakly closed, as well. 
Moreover, the sequence $\{ \omega \rightarrow p_K(.,\omega)   \}_{K=1}^{\infty}$ is (uniformly) bounded, w.r.t. the norm induced by $\big< ., . \big>_\mu$ because it belongs to the bounded set $\text{co }\mathcal{P}(\omega)$ for all $\omega \in \widehat \Omega$. By subsequence extraction (without relabelling), there exists a weakly convergent subsequence to $\{ \omega \rightarrow p(.,\omega)   \}$ for some $p \in \widehat{\mathcal{P}}$. The weak convergence $p_K \rightharpoonup p$ in the Hilbert space $(\mathcal{H}, \big< .,. \big>_\mu)$, employed in inequality (\ref{maxqN}), implies that
\begin{equation} \label{integral_weak_convergence} \int_{\Omega} \int_{0}^{T} p(t,\omega) \cdot [f(t, \bar x(t,\omega),\widetilde u_k(t,\omega),\omega)  -  f(t, \bar x(t,\omega),\bar u(t),\omega)]  \ dt d\mu(\omega) \leq 0 \ . \end{equation}


\noindent We observe that condition (\ref{form of set of velocities}) yields the following inclusion for all $t\in \mathcal{S}$
\begin{align} \label{inclusion_castaing} 
\int_{\Omega} p(t,\omega) \cdot f(t, & \bar x(t,\omega) ,U(t),\omega) \ d \mu (\omega) ~\ \mathlarger{\mathlarger{{\subset}}} ~\
\overline{\bigcup\limits_{k \ge 0} \bigg\{\int_{\Omega} p(t,\omega) \cdot f(t,\bar x(t,\omega), \widetilde{u}_{k}(t,\omega),\omega) \   d \mu (\omega)\bigg\} } \ ,
\end{align}
where $\mathcal{S}$ is a set of full measure in $[0,T]$. 
Define now the set $\mathbcal S' \subset \mathbcal S$, still of full measure in $[0,T]$, containing the Lebesgue points for the map $\Gamma:[0,T] \rightarrow \R$ defined as
\[ s \mapsto \Gamma(s) := \int_{\Omega} p(s,\omega) \cdot [f(s,\bar x (s, \omega), \widetilde {u}_k(s,\omega),\omega)  - f(s,\bar x (s, \omega), \bar u(s),\omega)] \  d \mu(\omega)  \]for all $k$.
Take any $t \in \mathcal S'$ and $u \in U(t)$. Owing to (\ref{inclusion_castaing}), there exists a subsequence $\{k_\ell\}_\ell$ such that
\begin{align*}
 \int_{\Omega} p(t,\omega) \cdot  f(t,\bar x (t, \omega), u,\omega) \ d \mu(\omega)  = \lim\limits_{\ell}  \int_{\Omega} p(t,\omega) \cdot f(t,\bar x (t, \omega), \widetilde {u}_{k_\ell}(t,\omega),\omega) \ d \mu(\omega) \ . \end{align*}
In other words, for a sequence $\beta_\ell \downarrow 0$ (possibly taking a subsequence of $\widetilde{u}_{k_\ell}$), we have
\begin{align} \label{inequality_adjoint_arc_inclusion_castaing} \bigg| \int_{\Omega} p(t,\omega) \cdot  f(t,\bar x (t, \omega), u,\omega) \  d \mu(\omega) -  \int_{\Omega} p(t,\omega) \cdot f(t,\bar x (t, \omega), \widetilde {u}_{k_\ell}(t,\omega),\omega) \  d \mu(\omega) \bigg| \le \beta_\ell \ . \end{align}
For the Lebesgue point $t \in  \mathbcal S'$, we can also consider a sequence of intervals $\{ [s_i,t_i]  \}_{i \ge 1}$, having extrema in a countable dense set $\mathcal{D}$ of $[0,T]$ (in the sense of (\ref{system: construction of another family of controls })) and such that $s_i \uparrow t$ and $t_i \downarrow t$. 
Recalling the definition (\ref{system: construction of another family of controls }) of $\widetilde v_{j,i}$ and replacing in (\ref{integral_weak_convergence}) $\widetilde u_k$ by $v_{j}(t,\omega)$ on $[s_i,t_i] \times \Omega$, and by $\bar u(t)$ on $([0,T]\setminus [s_i,t_i]) \times \Omega$, using Fubini-Tonelli (since the integrand is $\mathcal{L} \times \mathcal{B}_\Omega-$measurable) and dividing across by $|t_i-s_i|$, we obtain
\begin{equation} \label{fubini_tonelli_inequality} \frac{1}{|t_i-s_i|} \int_{s_i}^{t_i} \int_\Omega  p(s,\omega) \cdot  [ f(s,\bar x (s,\omega), {v}_j(s,\omega),\omega) - f(s,\bar x (s,\omega),\bar u(s),\omega)] \ d\mu(\omega) ds\leq 0 \ .    \end{equation}
Since $t$ is a Lebesgue point for the map $\Gamma$, we deduce
\begin{align} \label{lebesgue_point_definition} \nonumber & \int_\Omega  p(t,\omega) \cdot  [f(t,\bar x (t,\omega),\widetilde{u}_k(t,\omega),\omega)  - f(t,\bar x (t,\omega),\bar u(t),\omega)   ] \ d\mu(\omega) \\  & = \lim\limits_{i}  \frac{1}{|t_i-s_i|} \int_{s_i}^{t_i} \int_\Omega  p(s,\omega) \cdot  [ f(s,\bar x (s,\omega), {v}_j(s,\omega),\omega) - f(s,\bar x (s,\omega),\bar u(s),\omega)   ] \ d\mu(\omega) ds. 
\end{align}
Therefore, owing to (\ref{inequality_adjoint_arc_inclusion_castaing})-(\ref{lebesgue_point_definition}), we have
\[ \int_{\Omega} p(t,\omega) \cdot [f(t,\bar x (t,\omega),u,\omega) - f(t,\bar x (t,\omega),\bar u(t),\omega) ] \ d\mu(\omega) \leq \beta_\ell + 0 \ ,   \] for any $\beta_\ell \downarrow 0$ and any $u\in U(t)$.
We conclude that
\[ \int_{\Omega} p(t,\omega) \cdot [f(t,\bar x (t,\omega),u,\omega) - f(t,\bar x (t,\omega),\bar u(t),\omega) ] \  d\mu(\omega) \leq 0    \]
for any $u\in U(t)$ and for all $t \in \mathcal S'$, a set of full measure in $[0,T]$. Therefore, now all the assertions stated in Theorem \ref{theorem_nco_general_case_probability_space} are confirmed (included the maximality condition (\ref{item: maximality condition general case})), which completes the proof. 
\qed

\ \\
\noindent
{\bf Proof of Theorem \ref{atomic}}.
A purely atomic measure has necessarily at most a countable support. We can therefore choose $\widehat{\Omega}$ in such a manner that $\widehat{\Omega}= \text{supp}(\mu)$.
The properties (\ref{item: absolute continuity of adjoint arcs atomic case}) and (\ref{item: adjoint system+nontriviality+transversality atomic})  follow immediately considering Steps 1, 2 and 3 of Theorem \ref{theorem_nco_general_case_probability_space} proof and the obtained costate arc $p(.,.)$.
On the other hand, the maximality condition (\ref{item: maximality condition atomic case}) can be deduced by contradiction, avoiding any use of the technical procedure of Step 4 of Theorem \ref{theorem_nco_general_case_probability_space} proof which requires the construction of appropriate multifunctions and the use of selection theorems. 
We provide here the details of this `new step 4' which allows to obtain (ii).

%
%

\noindent
Consider the function
\[ (t,u) \to \Psi(t,u) := \sum_{k \ge 0}  \mu(\omega_k) p(t,\omega_k) \cdot \big[ f(t,\bar x(t,\omega_k),u,\omega_k) - f(t,\bar x(t,\omega_k),\bar u(t),\omega_k)  \big] .  \]
Using a standard argument, one can easily show that
\[ (t,u) \to \Psi(t,u) \text{ is } \mathcal{L}\times \mathcal{B}^m-\text{measurable}.   \]
Therefore, setting, for $j\in \N$
\[ E_j := \bigg\{ (t,u) \ : \ \Psi(t,u) \ge \frac{1}{j}   \bigg\} \subset [0,T] \times \R^m ,  \]
we have that $E_j$ is a $\mathcal{L}\times \mathcal{B}^m-$measurable set. Define
\[  B_j := \bigg\{  t \ : \ (t,u) \in E_j \cap \text{Gr } U(.)  \bigg\}.  \]
Then $\{B_j\}_{j \ge 1}$ is an increasing sequence of $\mathcal{L}-$measurable sets. 
%
Consider the following $\mathcal{L}\times \mathcal{B}^m-$measurable set $E$
\[  E := \Psi^{-1} (]0,+\infty[) \cap \text{Gr }U(.) = \{ (t,u) \ : \ (t,u) \in \text{Gr } U(.) \text{ and } \Psi(t,u) > 0  \}, \]
and denote by $E_t$ the $t-$section of $E$, i.e.
\[ E_t := \{   t \in [0,T] \ : \ (t,u) \in E  \} .   \]
Then, $E_t := \cup_{j \ge 1} B_j$.

\noindent
Now assume, by contradiction, that (ii) of Theorem \ref{atomic} is violated. Therefore, meas$(E_t) >0$. Write $\delta:=\text{meas}(E_t).$ Since, $\text{meas}(E_t) = \lim\limits_{j \to \infty} \text{meas}(B_j)$, there exists $j_0\in \N$ such that $\text{meas}(B_j) \ge \frac{\delta}{2}$ for all $j \ge j_0$. 
Therefore, for all $t \in B_{j_0}$, there exists $u_t \in U(t)$ such that $\Psi(t,u_t) \ge \frac{1}{j_0}$. 
Take $i_0 \in \N$ such that
\begin{equation} \label{inequality measure} 2c M_p \sum_{k \ge N_{i_0}} \mu(\omega_k) \le \frac{1}{8} \frac{1}{j_0} \ , \end{equation}
(here $c>0$ is the upper bound for $|f|$ (see (A3)$'$) and  $M_p>0$ is an upper bound for $||p( ., \omega)||_{L^\infty}$), and
\begin{equation} \label{choice of rho}  \rho'_{i_0} \le \min \bigg\{ \frac{\delta}{8} \  ; \ \frac{1}{16j_0} \ ; \ \frac{ \delta}{32 j_0  \beta M_p \int_{0}^{T} k_f(s) \ ds}   \bigg\}  . \end{equation}
(Recall that $\beta>0$ is the number provided by Lemma \ref{lemma_continuity_omega_control} (i) and $\{\rho_i'\}$ is the decreasing sequence appearing in Step 2 of the proof of Theorem \ref{theorem_nco_general_case_probability_space}.)

\noindent
For all $i \ge i_0$ and for all $t \in B_{j_0} \cap A_{\rho_i}$, we have

\begin{align} \label{inequality}
\nonumber\frac{1}{j_0} & \leq \sum_{k \ge 0} p(t,\omega_k) \cdot \bigg[f(t,\bar{x}(t,\omega_k),u_t,\omega_k) -  f(t,\bar{x}(t,\omega_k),\bar u(t),\omega_k)     \bigg] \mu(\omega_k) \\ & \nonumber = \sum_{k=0}^{N_i} p_i(t,\omega_k) \cdot \bigg[f(t,{x_i}(t,\omega_k),u_t,\omega_k) -  f(t,{x_i}(t,\omega_k),\bar u(t),\omega_k)     \bigg] \mu(\omega_k) \\ & \nonumber + \sum_{k=0}^{N_i} \bigg( p(t,\omega_k) \cdot \bigg[f(t,\bar{x}(t,\omega_k),u_t,\omega_k) -  f(t,\bar{x}(t,\omega_k),\bar u(t),\omega_k)     \bigg] \\ & - p_i(t,\omega_k) \cdot \bigg[f(t,{x_i}(t,\omega_k),u_t,\omega_k) -  f(t,{x_i}(t,\omega_k),\bar u(t),\omega_k)     \bigg] \bigg) \mu(\omega_k) \nonumber \\ &  + \sum_{k=N_i+1}^{+ \infty} \mu(\omega_k) p(t,\omega_k) \cdot \big[ f(t,\bar x(t,\omega_k),u,\omega_k) - f(t,\bar x(t,\omega_k),\bar u(t),\omega_k)  \big] \ .
\end{align}
\noindent
Condition \ref{a3} established in Step 2 of Theorem \ref{theorem_nco_general_case_probability_space} proof implies that the first term on the right-hand side of (\ref{inequality}) satisfies
\[ \sum_{k=0}^{N_i} p_i(t,\omega_k) \cdot \bigg[f(t,{x_i}(t,\omega_k),u_t,\omega_k) -  f(t,{x_i}(t,\omega_k),\bar u(t),\omega_k)     \bigg] \mu(\omega_k) \leq  \rho_i' \quad \text{for all } t \in A_{\rho_i'}.  \]
Concerning the second term on the right-hand side of (\ref{inequality}) we make use of the boundedness of $f$ and $\|p(., \omega) \|_{L^\infty}$, and the estimate  (\ref{xi}): we obtain
\begin{align*}
S :=  &\sum_{k=0}^{N_i}  \bigg( p(t,\omega_k) \cdot \bigg[f(t,\bar{x}(t,\omega_k),u_t,\omega_k) -  f(t,\bar{x}(t,\omega_k),\bar u(t),\omega_k)     \bigg]  \\ 
& \quad - p_i(t,\omega_k) \cdot \bigg[f(t,{x_i}(t,\omega_k),u_t,\omega_k) -  f(t,{x_i}(t,\omega_k),\bar u(t),\omega_k)     \bigg] \bigg)  \mu(\omega_k) \\ 
 \leq &  2c \sum_{k=0}^{N_{i_0}} \mu(\omega_k)|p(t,\omega_k)-p_i(t,\omega_k)| +  2c \sum_{k=N_{i_0}+1}^{N_i} \mu(\omega_k)  |p(t,\omega_k)-p_i(t,\omega_k)| +   2 k_f(t) \beta M_p \rho_i' \times  \left( \sum_{k=0}^{N_i}  \mu(\omega_k)\right) .
\end{align*}
Take $i_1 \ge i_o$ large enough such that for all $i \ge i_1$
\[ \| p(.,\omega_k)   - p_i(., \omega_k) \|_{L^\infty}  \le \frac{1}{16c} \frac{1}{j_0} \quad \text{for all } k=0, \ldots, N_{i_0}.  \]
Therefore, owing to the choice made in (\ref{inequality measure}), we have $S\le  2 k_f(t) \beta M_p \rho_i' + \frac{1}{8j_0} + \frac{1}{4j_0}$.
Then, from (\ref{inequality}), we obtain that
\[   \frac{1}{2j_0}  \leq \rho_i' \bigg[ 1 +  2 k_f(t) \beta M_p \bigg]  . \]
By integrating over the measurable set $B_{j_0} \cap A_{\rho_i'}$, taking into account that $ \frac{\delta}{2} \le \mbox{meas}(B_{j_0})\le \delta$ and meas$([0,T] \setminus A_{\rho'_i}) \le \rho_i'$, we arrive at 
$$ 
\frac{1}{8} \frac{\delta}{j_0} \le \frac{1}{16} \frac{\delta}{j_0},
$$
a contradiction. Therefore, also the maximality condition (\ref{item: maximality condition atomic case}) of Theorem \ref{atomic} holds true.

\qed

\ \\
\noindent
{\bf Proof of Theorem \ref{proposition: smooth data nco}}. A scrutiny of Theorem \ref{theorem_nco_general_case_probability_space} proof reveals that Steps 1, 2 and 3 are applicable providing a simplified result. 
Indeed, taking into account hypotheses (\ref{item: c1 assumption smooth data proposition nco})-(\ref{item: c2 assumption smooth data proposition nco}) on $f(t,.,u,\omega)$ and $g(.,\omega)$, we obtain a family of costate arcs $\widetilde p(.,\omega)$, for $\omega\in \widehat \Omega$ ($\widehat \Omega$ is a countable dense subset of $\mbox{supp}(\mu)$), satisfying the properties listed at the end of the Step 3 of the proof of Theorem \ref{theorem_nco_general_case_probability_space}, where (\ref{equation: adjoint system proof general case}) and (\ref{equation: transversality condition proof general case}) read now as
\begin{equation}\label{1}
-\dot {\widetilde p}(t, \omega) = [\nabla_x f(t, \bar x (t, \omega), \bar u(t), \omega)]^T   \widetilde p(t, \omega)  \quad  \mbox{a.e.} \;\;\; t\in [0,T]\ ,
\end{equation} 
and
\begin{equation}\label{2}
- \widetilde p(T, \omega)  =  \nabla_x g (\bar{x}(T,\omega)),\omega)	
\end{equation} 
for all $\omega \in \hat \Omega$. 
Notice, that the multiplier $\lambda$ cannot take the value $0$, for otherwise we would obtain a contradiction with the  nontriviality condition. 
Then, normalizing we can take $\lambda=1$.

\noindent
We claim now that we can extend in a unique way the family of arcs $\widetilde p (.,\omega)$, for $\omega\in \widehat \Omega$, to a $\mathcal{L}\times\mathcal{B}_\Omega$ measurable function $p(.,.): [0,T] \times \Omega \rightarrow \R^n$ such that for all $\omega \in \mbox{supp}(\mu) $ we have:
\begin{enumerate}[label=(\roman*)$''$, ref=(\roman*)$''$]
	\item $p(., \omega) \in W^{1,1}([0,T],\R^n)$;
	\item \label{item: adjoint system condition proof corollary} $-\dot p(t,  \omega) = [\nabla_x f(t, \bar x (t,   \omega), \bar u(t),   \omega)]^T    p(t,  \omega) $\quad  \ a.e. $t\in [0,T]$ ;
	\item  \label{item: transversality condition proof corollary} $- p(T,   \omega)  =  \nabla_x g (\bar{x}(T,  \omega); \omega)$.
\end{enumerate}
Indeed, take any  $\omega \in \Omega \setminus \widehat \Omega$. If $\omega\in \Omega \setminus \mbox{supp}(\mu)$ we set $p(.,\omega)=0$. So we continue the analysis considering the case $\omega \in \mbox{supp}(\mu)\setminus \widehat \Omega$. 
Then, since $\widehat \Omega$ is dense in $\mbox{supp}(\mu)$, there exists a sequence $\{\widehat \omega _i\} \subset \widehat{\Omega}$ converging to $\omega$. 
Assumptions \ref{A2'_nco_general_case} and \ref{A4'_nco_general_case} guarantee that $|\nabla_x f(t, \bar x (t,   \omega), \bar u(t),  \widehat \omega _i)|\le k_f(t)$ a.e. $t\in [0,T]$ and $|\nabla_x g|\le k_g$. From (\ref{1}) we deduce that $\{ \dot{\widetilde p}(.,\widehat \omega _i)  \}$ is uniformly integrally bounded, and (\ref{2}) guarantees that $|\widetilde p(T, \widehat \omega _i)|\le k_g$.
Then, by a standard compactness argument, taking a subsequence (we do not relabel), there exists $p(.,\omega) \in W^{1,1}([0,T],\R^n)$ such that
\[  \widetilde p(t,\widehat \omega _i) \to p(t, \omega) \qquad \text{uniformly on } [0,T] \text{  as  } i \to \infty   \]
\[ \dot{\widetilde p}(t,\widehat \omega _i)  \rightharpoonup \dot{p}(t,\omega)  \qquad \text{weakly in } L^1   \] 
and
\begin{equation}\label{limit1}
  - \dot{p}(t,\omega)   = [\nabla_x f (t,\bar x(t,\omega),\bar u(t), \omega)]^T p(t,\omega)   \quad  \text{a.e. } t \in [0,T] \ ,
\end{equation} 
\begin{equation}\label{limit2}
-p(T,\omega) = \nabla_x g(\bar x(T,\omega);\omega)	\ .
\end{equation} 
(The last two equalities are a consequence of Lemma \ref{lemma_continuity_omega_control} (ii).)

\noindent
This, being true for any sequence $\{\widehat \omega _i\}\subset \widehat \Omega$ converging to $\omega \in \mbox{supp}(\mu) \setminus \widehat \Omega$, since the limit arc satisfies the same conditions (\ref{limit1})-(\ref{limit2}), we conclude that we can extend the family of arcs $\widetilde p (.,\omega)$ simply taking the limit:
\begin{equation}
\label{choice of adjoint arc} p(.,\omega) := \lim\limits_{\rho_{\Omega}(\omega,\widehat \omega) \to 0, \;\; \widehat \omega \in \widehat \Omega} \widetilde p(.,\widehat \omega)\ ,
\end{equation}
confirming the claim above. It remains to prove the Weierstrass condition \ref{item: maximality condition corollary}.  We follow exactly the same analysis of Step 4 of Theorem \ref{theorem_nco_general_case_probability_space} proof, taking now the simplified version of the definition of the set $D$ in which we take into account the regularity of functions $f$ and $g$, the fact that $\lambda=1$ and we do not have end-point constraints:
\begin{align*} D := \{ (\omega,\xi) & \in \Omega \times \R^K \ | \ \omega \in \Omega \; \textrm{ and }  \xi = \big( \Psi_k(p(.,\omega), \omega) \big) _{k=1,\ldots,K} \textrm{ for some } \mathcal{L}\times\mathcal{B}_\Omega \text{ measurable function } \\ & p: [0,T] \times \Omega \rightarrow \R^n \text{ such that }  \ \ p(.,\omega) \in \mathcal{P}_S(\omega) \ \text{ for all }\omega \in \mbox{supp}(\mu) \} 
\end{align*}
where now, we set
\begin{align*} \mathcal{P}_S(\omega) :=  & \Bigg\{ q(.,\omega) \in  W^{1,1}( [0,T], \R^n) \ : \  
-q(T,\omega) = \nabla_x g(\bar x(T,\omega);\omega) \\ &  - \dot q(t,\omega) = [\nabla_x f (t,\bar x(t,\omega),\bar u(t), \omega)]^T  q(t,\omega) \ \text{ a.e. } t \in [0,T] \Bigg\}. \end{align*}
\noindent
The uniqueness of solutions to systems appearing in $\mathcal{P}_S(\omega)$ allows to conclude.

\qed

\ \\

\noindent
{\bf Acknowledgements.} The authors are thankful to Richard B. Vinter for his suggestion to study necessary conditions for average cost optimal control problems, and to the referees for their many helpful comments.

\end{document}